\definecolor{dkgreen}{rgb}{0,0.6,0}
\definecolor{gray}{rgb}{0.5,0.5,0.5}
\definecolor{mauve}{rgb}{0.58,0,0.82}
\DeclareMathOperator*{\argmax}{arg\,max}
\DeclareMathOperator*{\argmin}{arg\,min}
\newtheorem{theorem}{Theorem}[section]
\newtheorem{definition}[theorem]{Definition}
\newtheorem{proposition}[theorem]{Proposition}
\newtheorem{lemma}[theorem]{Lemma}
\newtheorem{corollary}[theorem]{Corollary}
\crefname{theorem}{Theorem}{Theorems}
\crefname{example}{Example}{Examples}
\crefname{observation}{Observation}{Observations}
\crefname{remark}{Remark}{Remarks}
\crefname{proposition}{Proposition}{Propositions}
\crefname{lemma}{Lemma}{Lemmas}
\crefname{corollary}{Corollary}{Corollaries}
\crefname{algocf}{Algorithm}{Algorithms}	
\crefname{table}{Table}{Tables}	
\crefname{figure}{Figure}{Figures}
\crefname{algorithm}{Algorithm}{Algorithms}
\crefname{section}{Section}{Sections}
\crefname{algorithm}{Algorithm}{Algorithms}
\title{An efficient branch-and-cut approach for the sequential competitive facility location problem under partially binary rule}
\author[a]{Yu-Qi Guo}
\author[a]{Yan-Ru Wang}
\author[a]{Wei-Kun Chen}
\author[b]{Yu-Hong Dai}
\affil[a]{\small School of Mathematics and Statistics/Beijing Key Laboratory on MCAACI, Beijing Institute of Technology, Beijing 100081, China}
\affil[b]{\small Academy of Mathematics and Systems Science, Chinese Academy of Sciences, Beijing 100190, China}
\date{}
\DeclareMathOperator{\conv}{conv}
\newcommand{\be}{\boldsymbol{e}}
\newcommand{\LP}{\text{LP}\xspace}
\newcommand{\MINLP}{{MINLP}\xspace}
\newcommand{\MILP}{{MILP}\xspace}
\newcommand{\CFLP}{\text{CFLP}\xspace}
\newcommand{\SCFLP}{\text{SCFLP}\xspace}
\newcommand{\BnC}{\text{B\&C}\xspace}
\newcommand{\CQ}{\mathcal{Q}\xspace}
\newcommand{\CU}{\mathcal{U}\xspace}
\newcommand{\CP}{\mathcal{P}\xspace}
\newcommand{\W}{\mathcal{W}\xspace}
\newcommand{\U}{\mathcal{U}\xspace}
\newcommand{\CS}{{\mathcal{S}}}
\newcommand{\CT}{{\mathcal{T}}}
\newcommand{\CO}{{\mathcal{O}}}
\newcommand{\CZ}{\mathcal{Z}}
\newcommand{\X}{\mathcal{X}}
\newcommand{\Y}{\mathcal{Y}}
\newcommand{\F}{\mathcal{F}}
\newcommand{\R}{\mathbb{R}}
\newcommand{\proj}{{\rm{proj}}}
\newcommand{\St}{\rm s.t.}
\newcommand{\tblT}{\texttt{T}\xspace}
\newcommand{\tblN}{\texttt{N}\xspace}
\newcommand{\tblO}{\texttt{Obj}\xspace}
\newcommand{\tblUB}{\texttt{\texttt{UB}}\xspace}
\newcommand{\tblLB}{\texttt{LB}\xspace}
\newcommand{\tblTorG}{\texttt{T\,(G\%)}\xspace}
\newcommand{\tblSolved}{\texttt{Sol.}}
\newcommand{\tblAve}{\texttt{Ave.}}
\newcommand{\tblG}{\texttt{G\%}\xspace}
\newcommand{\tblrG}{\texttt{LPG\%}\xspace}
\newcommand{\tblCutTime}{\texttt{CT}\xspace}
\newcommand{\tblCutNum}{\texttt{C}\xspace}
\newcommand{\tblOldSubmodular}{\texttt{B\&C+SF}\xspace}
\newcommand{\tblNewSubmodular}{\texttt{B\&C+GSF}\xspace}
\newcommand{\tblZLP}{\texttt{B\&C+EF}\xspace}
\newcommand{\tblBiesinger}{\texttt{EA}\xspace}
\begin{document}
	\maketitle

\begin{abstract}
We investigate the sequential competitive  facility location problem (\SCFLP) under  partially binary rule where  two companies sequentially open a limited number of  facilities to maximize their market shares, requiring customers to patronize, for each company, the facility with the highest utility. 
The \SCFLP is a bilevel mixed integer nonlinear programming (\MINLP) problem and can be rewritten as a single-level \MINLP problem, where each nonlinear constraint corresponds to a  hypograph of a multiple ratio function characterizing the leader's market share for {a fixed follower's location choice}.
Through establishing the submodularity of the multiple ratio functions, we characterize the mixed 0-1 set induced by each hypograph using submodular inequalities and extend a state-of-the-art branch-and-cut (\BnC) algorithm onto the considered \SCFLP.
To address the challenge arising in the poor linear programming (\LP) relaxation of the underlying formulation, we develop  
two new mixed integer linear programming (\MILP) formulations for the \SCFLP as well as efficient \BnC algorithms based on them.
The first \MILP formulation is based on a class of improved submodular inequalities, which include the classic submodular inequalities as special cases, and {together with the trivial inequalities} are able to characterize the convex hull of the mixed 0-1 set.
The second one is an extended formulation of the first one that provides the same \LP relaxation bound.
We also develop efficient algorithms for the {separations} of the exponential families of the inequalities arising in the \MILP formulations.
By extensive computational experiments, we demonstrate that 
the proposed \BnC algorithms significantly outperform an adapted state-of-the-art \BnC algorithm and a {sophisticated} heuristic algorithm in the literature.
{Moreover}, the proposed \BnC algorithms are capable of finding optimal solutions for  \SCFLP instances with up to $1000$ customers and facilities within a time limit of two hours. 
\end{abstract}

\section{Introduction}\label{sec:introduction}
In the competitive facility location problem (\CFLP), two or multiple companies 
attempt to {open facilities} to compete 
for the market share of the customers.
The customers (or customer zones) are partially or totally served by the facilities located by the companies, according to some prespecified \emph{customer choice rule}, and the objectives of the companies are to maximize the market shares.
\CFLP{s} arise in a wide variety of  applications such as  building charging stations for 
 electric vehicles \citep{Zhao2020}, locating park-and-ride facilities \citep{Aros-Vera2013,Freire2016},
  designing preventive healthcare networks \citep{Zhang2012},  
   opening  new retail stores \citep{Mendez-Vogel2023a},
    and deploying lockers for last-mile delivery \citep{Lin2022a}.  
    For detailed reviews of \CFLP{s}, we refer to the surveys \cite{Eiselt1997},
    \cite{Plastria2001}, \cite{Suarez-Vega2004}, \cite{Kress2012}, \cite{Mishra2023},  and  \cite{Drezner2024}. 
In the literature, various \CFLP{s} have been investigated. 
Based on the competition types, \CFLP{s} can be categorized into three classes: static,  sequential, 
and dynamic \CFLP{s}. 
{For the static \CFLP,   a newcomer company attempts to enter a shared  market by opening new facilities to attract the customer demand, 
where competitors' facilities already exist and do not react to the newcomer's action.}
This problem has been widely applied in various contexts; see 
\citet{Benati2002,Aboolian2007,Aros-Vera2013,Haase2014,Freire2016,Ljubic2018,Mai2020}, and \citet{Legault2025} among many of them. 
{The static \CFLP focuses on {decisions of locating facilities made at a short-term operational level}, assuming that the facilities of competitors are {already} established and will remain unchanged for the foreseeable future.}
Different from the static \CFLP, sequential and dynamic \CFLP{s} allow
the competitors to react to the entrant who opens new facilities in the shared market.
Specifically, 
the dynamic \CFLP suggests that {the} two non-cooperative companies can simultaneously
decide where to locate their facilities, without the knowledge of each other's choice \citep{Godinho2010}. 
This process is iterated (where the two companies may relocate their facilities) and terminates when reaching the so-called \emph{Nash equilibrium} (if exists). 
A Nash equilibrium is a configuration of all variables in the model 
such that no competitor has an incentive to deviate unilaterally from the current solution \citep{Drezner2024}. 
In contrast to the dynamic \CFLP, in the sequential \CFLP, the companies will not {relocate} their facilities, as establishing facilities is usually expensive.
It can be considered as a leader-follower Stackelberg game \citep{Stackelberg1934}, 
{where the leader attempts to open facilities to maximize his/her market share, 
anticipating the follower's location choice made after the leader {takes action}.}
Many  sequential \CFLP models are available in the literature; see, for instance, \citet{Eiselt1997}, \citet{Drezner1998}, \citet{Plastria2008},  \citet{Kucukaydin2011, Kucukaydin2012}, 
 \citet{Gentile2018}, and \citet{Qi2024}. 
In this paper, we focus on the sequential \CFLP, which is long-term oriented and {relevant in the context of} locating costly chained facilities such as hotels, supermarkets, and shopping malls \citep{Qi2024}.

Another categorization of the {\CFLP{s}} is based on the patronizing behavior of customers.
The patronizing behavior of customers is based on the attractiveness/utility of open facilities,  
which typically depends on a set of attributes such as distance, transportation cost, waiting time,
 and facility  capacity. 
In the literature, several customer choice rules have been proposed.
Specifically, the binary rule assumes that each customer buys all the goods from the facility with 
 the highest utility; see  \citet{Plastria2008,Beresnev2013,Alekseeva2015,Fernandez2017}, and \cite{Lancinskas2020}.  
In contrast to the binary rule where the customer behavior is \emph{deterministic}, in the proportional rule,  customers may  patronize multiple facilities with certain probabilities. 
More specifically, this rule assumes that 
customers  split their demand across 
all open facilities of the companies in proportion  to their utilities.
The proportional rule includes the well-known multinomial  logit \citep{McFadden1974} and Huff-based gravity
 \citep{Huff1964} rules as special cases, and has been widely applied in various contexts; 
  see \citet{Benati2002,Aboolian2007,Berman2009,Kucukaydin2011,Aros-Vera2013,Haase2014,Ljubic2018,Mai2020}, and \cite{Qi2024} among  many others.
When the facilities belonging to a chain have no
attribute that makes them significantly different from each other except for, e.g., their locations, a customer tends to choose  the  store with the highest utility from each company/chain \citep{Mendez-Vogel2023a} rather than all of them.
 In this setting, we can adopt an intermediate rule known as the partially binary rule,
  originally proposed by \citet{Hakimi1990}.
This rule resembles the proportional rule but requires 
each customer to first construct a consideration set where the facility with the highest utility of each company is included, and then split all of his/her demand over the facilities in the consideration set 
in proportion to their attractiveness \citep{Suarez-Vega2004,Biesinger2016,Fernandez2017,Mendez-Vogel2023a}.
For other customer choice rules, we  refer to \citet{Lin2022a,Mendez-Vogel2023a}, and \cite{Lin2024} and the references therein. 

In this paper, we are interested in the sequential \CFLP (\SCFLP) under partially binary rule, in which the leader attempts to open $p$ $(p \geq 1)$ facilities to maximize his/her market share 
anticipating that the follower will react to the decision by opening $r$ $(r \geq 1)$ facilities to maximize his/her market share.
The problem can be formulated as a bilevel mixed integer nonlinear programming (\MINLP) problem and recasted into a single-level \MINLP problem using the technique of \citet{Qi2024}.
Our motivation is to investigate the polyhedral structure of the problem and  develop efficient exact \BnC algorithms that are capable of solving large-scale \SCFLP{s}.

\subsection{Solution approaches}

In this subsection, we review the solution approaches for solving the {\SCFLP}; {see the surveys \citet{Eiselt1997,Kress2012,Mishra2023}, and \cite{Mendez-Vogel2025} and the references therein for solution approaches of static and dynamic \CFLP{s}.}

For {\SCFLP{s}} under binary rule where each customer is assumed to be totally served by the leader or the follower, one well-known problem is the \emph{$(r|p)$-centroid problem}, {first proposed by \citet{Hakimi1983}}.
{In this problem, the leader finds the optimal locations for $p$ facilities to maximize the total demand of his/her covered customers 
 knowing that the follower will respond to the leader's action by locating $r$ new facilities to maximize the total demand of his/her covered customers.}
\citet{Plastria2008} considered a special case with $r = 1$, i.e., 
 the follower can react to the leader's location choice by placing a single new facility,
  and reformulated the bilevel optimization problem as a compact \MILP formulation, thereby enabling the use of state-of-the-art \MILP solvers to find an optimal solution for the problem.
For the general case with $r \geq 1$, the bilevel  optimization problem still admits a (single-level) \MILP reformulation but with an exponential number of constraints \citep{Roboredo2013,Alekseeva2015}. 
\citet{Roboredo2013} developed a branch-and-cut (\BnC)  algorithm to solve the $(r|p)$-centroid problem,  where the exponential number of constraints are separated on the fly, and reported computational results on instances with up to $100$ customers and $100$ facilities. 
Based on the same \MILP formulation, \citet{Alekseeva2015} developed {an iterative method} that can also find an optimal solution for the $(r|p)$-centroid problem.
For heuristic algorithms of the $(r|p)$-centroid problem, we refer to, e.g., the {tabu search algorithm \citep{Benati1994, Serra1994}}
and evolutionary algorithm \citep{Biesinger2016}.
\citet{Gentile2018} investigated three variants of the $(r|p)$-centroid problem (under binary rule) and developed \BnC algorithms to find optimal solutions for the problems.

The \SCFLP under proportional rule {was} first investigated by \citet{Drezner1998} where
the authors considered the planar problem with $p=r=1$ (i.e., both the leader and follower determine exactly one facility  in a continuous Euclidean plane)
and
{developed three heuristic algorithms, namely, the brute force, pseudo mathematical programming, and gradient search algorithms.}
\citet{ElenaSaiz2009a} considered a variant of \citet{Drezner1998} and developed a branch-and-bound approach that is able to return an $\epsilon$-optimal solution  to  the \SCFLP.
\citet{Biesinger2016} investigated the general case with arbitrary positive integers $p$ and $r$ in a discrete location space, and developed an  evolutionary algorithm to find a feasible solution for the \SCFLP.
{\citet{Qi2017} proposed a two-stage hybrid tabu search algorithm to find a feasible solution.} 
To the best of our knowledge, the first exact approach for solving \SCFLP{s} under proportional rule was provided by \citet{Qi2024}.
The authors
derived an equivalent single-level \MINLP formulation for the \SCFLP and developed a \BnC algorithm
 to the \MILP reformulation based on  submodular inequalities \citep{Nemhauser1981} and outer approximation inequalities
{\citep{Fletcher1994}} to find global optimal solutions.
In \cref{thm:decompose-Uy} of \cref{section:new-submodular}, we will develop a class of improved submodular inequalities 
for a submodular set that include the classic submodular inequalities as special case and thus can be used
 to strengthen the linear programming (\LP) relaxation of the \MILP reformulation of the \SCFLP.
{We refer to \citet{Kucukaydin2011,Kucukaydin2012,Lin2022b}, and \cite{He2025}  for investigations on variants of the
 \SCFLP that simultaneously consider the locations and attractiveness levels of the open facilities.}

The considered \SCFLP under partially binary rule  is, however, rarely studied in the literature,
 possibly due to technical challenges introduced by the highly nonlinear objective functions of the leader and follower; 
indeed, the objective functions are multiple ratio functions where both the numerator and denominator
 of each ratio term involve a nondifferentiable max function.
To the best of our knowledge, the only study was provided by \citet{Biesinger2016}, where the authors presented a bilevel \MINLP formulation, 
developed an \MILP reformulation for the follower's problem, and then used it in combination with an evolutionary algorithm
to find a feasible solution for the \SCFLP (without an optimality guarantee).
 As the  evolutionary algorithm requires to invoke off-the-shelf \MILP solvers to solve the time-consuming follower problems, only 
{computational results on instances with 100 candidate facility locations and customers were reported by \citet{Biesinger2016}.}

\subsection{Contributions}
The goal of this paper is to fill this research gap by proposing {the \emph{first} exact approach} that is capable of solving large-scale \SCFLP{s} under partially binary rule.
To achieve this, we first reformulate the bilevel \SCFLP as a single-level \MINLP problem, where each nonlinear constraint corresponds to a  hypograph of a multiple ratio function characterizing the leader's market share for a fixed follower's location choice, and then develop efficient \BnC algorithms based on linear descriptions of the mixed 0-1 set induced by each hypograph.
{In particular,}
\begin{enumerate}
	 	\item 
	 	{W}e establish the submodularity of the multiple ratio functions, which enables to   characterize the considered mixed 0-1 sets using the submodular inequalities of \citet{Nemhauser1981}, thereby obtaining an \MILP reformulation for the \SCFLP.
	 	This allows for the extension of the \BnC algorithm in \cite{Qi2024} onto the considered \SCFLP, in which the submodular inequalities are separated on the fly.
	 	
	 	\item It appears that, however,	the classic submodular inequalities are very {weak} in terms of providing a {poor} \LP relaxation bound, leading to a slow convergence of the \BnC algorithm.
	 	To address this challenge, we develop 
	 	a class of improved submodular inequalities, which generalize the classic ones as special cases, thereby obtaining another \MILP formulation with a much stronger \LP relaxation.  
		In particular, compared with the classic submodular inequalities that can only provide  a linear characterization of the considered mixed 0-1 set, the proposed improved submodular inequalities, together with the trivial inequalities, are able to provide a linear characterization of the convex hull of the considered mixed 0-1 set.
	 		  
	  \item 
	  By deriving a compact extended formulation for the convex hull of the considered mixed 0-1 set,
	  we develop an extended \MILP formulation for the \SCFLP. 
	  {Two features of the extended formulation, which {make} it able to serve as a good alternative for finding a global optimal solution to the \SCFLP, are as follows.}  
	  First, the \LP relaxation is as tight as that of the one based on the improved submodular inequalities.
	  Second, only a single linear inequality is needed to characterize the leader's market share (for a fixed follower's location choice). 
	  	This is in contrast to the one based on improved submodular inequalities where an exponential number of inequalities are needed.

\item We develop exact and heuristic algorithms for the separations of the exponential families of the above inequalities  arising in the \BnC contexts.
In particular, we show that for a fixed follower's location choice, the separation of the improved submodular inequalities can be conducted using an efficient strong polynomial-time algorithm;
and in general, {the separations of the improved submodular inequalities and the (exponential family of) inequalities in the extended formulation} can be conducted by solving the well-known $r$-median problem \citep{Hakimi1964}.

\end{enumerate}
Our extensive computational results show  that thanks to the very tight LP relaxations, the proposed \BnC algorithms
based on the improved submodular and extended formulations significantly outperform the state-of-the-art \BnC algorithm based on the submodular formulation of \cite{Qi2024} (adapted to our problem) and the evolutionary algorithm of  \citet{Biesinger2016}.
In particular, the proposed \BnC algorithms can find optimal solutions for  \SCFLP instances with up to 1000 customers and facilities within a time limit of two hours. 

\subsection{Organization and Notations}

The rest of the paper is organized as follows.  \cref{sect:problem-formulation} reviews the bilevel programming  formulation for the \SCFLP under partially binary rule and reformulates it as a single-level \MINLP problem. 
\cref{sect:milp_sub,section:new-submodular,section:ZLP} develop three \MILP formulations for the \SCFLP. 
\cref{section: implementation} discusses the algorithmic framework for the three  proposed \MILP formulations.
\cref{sec:computational_results} reports the computational results.
Finally, \cref{sec:Conclusions} draws the conclusion.

Throughout the paper, for any $n \in \mathbb{Z}_+$, let $[n] = \{1, 2, \dots, n\}$, and $[n] = \varnothing$ if $n = 0$.  
{Let $\boldsymbol{0}$, $\bm{1}$, and $\be_j$ be the all-zeros, all-ones, and the $j$-th unit vectors, respectively, of appropriate dimension.}
For any ${x} \in \{0, 1\}^n$, we define its support as $\mathcal{S}^x = \{j \in [n] : x_j = 1\}$.  
For any subset $\CS \subseteq [n]$, we denote the cardinality and the collection of all subsets of $\CS$  by $|\CS|$ and $2^{\CS}$, respectively.
For any real number $a$, let $(a)^+ = \max\{a, 0\}$.

\section{Problem formulation}\label{sect:problem-formulation}

{We first present the problem formulation for the \SCFLP under partially binary rule \citep{Biesinger2016}. }
In this problem, two firms, a leader and a follower, compete for the customer demand in a shared market.	 
Let $[m]$ be the set of customers and $[n]$ be the set of potential facility locations considered by the leader and follower (for simplicity of discussion, we assume that the potential facility locations, considered by the leader and the follower, are identical;
the extension to the case that  the leader and the follower consider different potential facility locations is straightforward).
{Under partially binary rule \citep{Hakimi1990},}
each customer $i \in [m]$ splits all of his/her demand {$w_i>0$} over the nearest leader facility $j \in [n]$ and the nearest follower facility $\ell \in [n]$ 
proportional to their attractiveness {$v_{ij}>0$} and {$v_{i\ell}>0$}, respectively.
The attractiveness parameters $v_{ij}$ and $v_{i\ell}$ are predetermined by a set of attributes 
such as distance, transportation cost, waiting time, and facility capacity.
In the \SCFLP,
the leader attempts to open $p$ $(p \geq 1)$ facilities to maximize his/her market share 
anticipating that the follower will react to the decision by opening $r$ $(r \geq 1)$ facilities.

Let $x,  \, y \in  \{0, 1\}^n$ be the leader/follower location variables such that $x_j = 1$/$y_j = 1$ 
if the leader/follower deploys a facility at location $j$ and $x_j = 0$/$y_j = 0$ otherwise.
Mathematically, the probability of customer $i$ (or the proportion of his/her demand)
patronizing the leader's facilities can be presented as
\begin{equation}
	h_i(x,y): = \frac{\max_{j \in [n]}v_{ij}x_j}{\max_{j \in [n]}v_{ij}x_j + {\max_{j \in [n]}v_{ij}y_j}}, 
\end{equation}
and the leader's market share is given by 
\begin{equation}\label{def:Gxy}
	g(x,y) := \sum_{i=1}^m w_i h_i(x,y).
\end{equation} 
The \SCFLP under partially binary rule can be formulated as the following \MINLP  problem:
\begin{subequations}\label{origin-bi-level}
	\begin{align}
	\text{(SCFLP)} \quad	\max \ &  \sum_{i=1}^m w_i \frac{\max_{j \in [n]}v_{ij}x_j}{\max_{j \in [n]}v_{ij}x_j + {\max_{j \in [n]}v_{ij}y^*_j}} \label{up-obj}\\
		\St \ & \sum_{j=1}^nx_j = p, \label{up-1}\\
		&x_j \in \{0,1\}, \ \forall \ j \in [n], \label{up-3}
	\end{align}
\end{subequations}
where
\begin{subequations}\label{origin-bi-level-lower}
	\begin{align}
		y^\ast \in \argmax\  &\sum_{i=1}^m w_i \frac{\max_{j \in [n]}v_{ij}y_j}{\max_{j \in [n]}v_{ij}x_j + {\max_{j \in [n]}v_{ij}y_j}}\label{low-obj}\\
		\St\ & \sum_{j=1}^ny_j = r, \label{low-1}\\
		&y_j \in \{0,1\},\ \forall \ j \in [n]. \label{low-3}
	\end{align}
\end{subequations}
The objective functions \eqref{up-obj} and \eqref{low-obj}  maximize 
the market share of the leader and the follower, respectively. 
Constraints \eqref{up-1} and \eqref{low-1} ensure that the leader and follower open exactly $p$ and $r$ facilities, respectively.
Finally, constraints \eqref{up-3} and \eqref{low-3} restrict the decision variables to be binary integers.

Next, we follow \cite{Qi2024} to rewrite problem \eqref{origin-bi-level} as a single-level \MINLP problem.
To proceed, observe that the objective functions \eqref{up-obj} and \eqref{low-obj} of the upper-level and lower-level problems \eqref{origin-bi-level} and \eqref{origin-bi-level-lower}, respectively, add up to a constant (i.e., $\sum_{i=1}^m w_i$).
Intuitively, this is determined by the fact that a customer
splits all of his/her demand over the leader and the follower.
This observation implies that the lower-level problem \eqref{origin-bi-level-lower} is equivalent to
\begin{equation}\label{def:Y}
	y^\ast \in \argmin_{y \in \Y} g(x,y), \text{ where } \Y:= \{y:\eqref{low-1},\ \eqref{low-3}\},
\end{equation}
where $g(x,y)$ is defined in \eqref{def:Gxy}.
Thus, the bilevel programming problem \eqref{origin-bi-level} of the \SCFLP can be rewritten as the following {max-min} problem: 
\begin{equation}\label{def:X}
	\max_{x \in \X} \min_{y \in \Y}g(x,y), \text{ where } 	\X:= \{x: \eqref{up-1}, \ \eqref{up-3}\}.
\end{equation}
By introducing a continuous variable $\eta$ to capture the optimal value of the lower-level problem, problem 
 \eqref{origin-bi-level} can be written as the following \MINLP formulation: 
\begin{align}\label{single-level-P1}\tag{\MINLP}
		\max_{\eta, \, x \in \X} \left\{\eta: \eta \le g(x,y), \ \forall \ y \in \Y \right\}.
\end{align} 

Unfortunately, \eqref{single-level-P1}
has an exponential number of {nonlinear constraints} $\eta\leq g(x,y)$ (as the number of points in $\Y$ could be exponential) involving the highly nonlinear multiple ratio functions $g(x,y)$, which
makes it unrealistic to find an optimal solution of the \SCFLP by directly solving \eqref{single-level-P1}.
In order to bypass the difficulty, in the next three sections, we will investigate the polyhedral structure of the mixed 0-1 set corresponding to each individual nonlinear constraint:
\begin{equation}\label{def:U_y_max}
	\CU_y = \left\{(\eta, x) \in \R \times \{0,1\}^n: \eta \le g(x,y)\right\},
\end{equation}%
and develop equivalent \MILP reformulations of  \eqref{single-level-P1} that can be solved to optimality by state-of-the-art \LP-based \BnC algorithmic framework.

\section{An \MILP reformulation based on submodular inequalities}
\label{sect:milp_sub}

In this section, we follow \cite{Qi2024} to derive an \MILP formulation for the \SCFLP under partially binary rule using submodular inequalities. 
We start from the definition of submodular function  and the linear description of a submodular set \citep{Nemhauser1981}. 
\begin{definition}
	A set function $F$: $2^{[n]} \rightarrow \R$ is nondecreasing and submodular if and only if
	\begin{equation*}
		F(\CS \cup \{j\}) - F(\CS) \ge F(\CT \cup \{j\}) - F(\CT) \geq 0, ~\forall~\CS \subseteq \CT \subseteq [n],~\forall~j \in [n]\backslash \CT.
	\end{equation*}
\end{definition}
\noindent Intuitively, if the marginal gain $\rho_j^{F}(\CS) := F(\CS \cup \{j\}) - F(\CS)$ 
of adding an element $j$ to set $\CS \subseteq [n]$ diminishes with the size of $\CS$,
then function $F(\CS)$ is submodular.
For a nondecreasing and submodular function $F$, {let $f$: $\{0,1\}^n \rightarrow \R$ be such that $f(x)= F(\CS^x)$  where $x \in \{0,1\}^n$ and $\CS^x = \{j \in [n] : x_j = 1\}$,} and let 
\begin{equation}\label{def:general U}
	\U = \left\{(\eta, x) \in \R \times
	\{0,1\}^n: \eta \le f(x) \right\}
\end{equation}
be a submodular set.
Using the classic result in \citet{Nemhauser1981}, we can provide a linear description of $\U$.
\begin{theorem}[\citet{Nemhauser1981}]\label{thm:sub-Nemhauser1981}
	$\U = \{ (\eta, x) \in \R \times \{0,1\}^n: \eqref{ineq:submod}\}$,
	where 
	\begin{equation}\label{ineq:submod} 
		\eta \le F(\CS) + \sum_{j \in [n] \backslash \CS} \rho_j^{F}(\CS) x_j,
		\ \forall \ \CS \subseteq [n].
	\end{equation}
\end{theorem}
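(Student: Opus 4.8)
The plan is to prove the two inclusions separately. Since both $\U$ and the set $\{(\eta,x):\eqref{ineq:submod}\}$ impose $x \in \{0,1\}^n$, it suffices to compare, for each fixed binary $x$, the admissible values of $\eta$. First I would dispatch the inclusion $\{(\eta,x):\eqref{ineq:submod}\} \subseteq \U$, which is immediate: given a point satisfying every inequality in \eqref{ineq:submod}, I simply specialize to the choice $\CS = \CS^x$. Since $x_j = 0$ for every $j \in [n]\setminus \CS^x$, the entire sum on the right-hand side vanishes and the inequality collapses to $\eta \le F(\CS^x) = f(x)$, placing the point in $\U$.

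The substantive direction is $\U \subseteq \{(\eta,x):\eqref{ineq:submod}\}$. Fix $(\eta,x) \in \U$, so $\eta \le F(\CS^x)$, and fix an arbitrary $\CS \subseteq [n]$. Because $x$ is binary, $\sum_{j \in [n]\setminus \CS}\rho_j^{F}(\CS)\,x_j = \sum_{j \in \CS^x \setminus \CS}\rho_j^{F}(\CS)$, so it is enough to establish the purely set-theoretic bound
\begin{equation*}
F(\CS^x) \le F(\CS) + \sum_{j \in \CS^x \setminus \CS} \rho_j^{F}(\CS),
\end{equation*}
after which $\eta \le F(\CS^x)$ yields \eqref{ineq:submod}. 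To prove this bound I would first invoke monotonicity to pass to the larger set, $F(\CS^x) \le F(\CS \cup \CS^x)$, and then expand $F(\CS \cup \CS^x) - F(\CS)$ as a telescoping sum: enumerating $\CS^x \setminus \CS = \{j_1,\dots,j_k\}$ in any order and writing $\CS_i = \CS \cup \{j_1,\dots,j_i\}$ with $\CS_0 = \CS$, we have $F(\CS \cup \CS^x) - F(\CS) = \sum_{i=1}^k \bigl(F(\CS_i) - F(\CS_{i-1})\bigr)$. Each increment is the marginal gain of adding $j_i$ to $\CS_{i-1} \supseteq \CS$, and the diminishing-returns property bounds it above by $\rho_{j_i}^{F}(\CS)$, the marginal gain of adding $j_i$ to the smaller set $\CS$. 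Summing these bounds gives $F(\CS\cup\CS^x) - F(\CS) \le \sum_{j \in \CS^x\setminus\CS}\rho_j^{F}(\CS)$, which is exactly the required inequality.

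I expect the telescoping step to be the only real obstacle, and it is conceptual rather than computational: the decomposition must be organized so that every marginal increment $F(\CS_i) - F(\CS_{i-1})$ is taken over a chain of sets each containing $\CS$, so that the submodularity inequality applies with $\CS$ as the ``base'' set. The monotonicity hypothesis is essential for replacing $\CS^x$ by $\CS \cup \CS^x$ before telescoping, since without it the bound can fail when $\CS \not\subseteq \CS^x$. Once these two ingredients are combined, both inclusions follow and the linear description is established.
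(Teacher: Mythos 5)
Your proof is correct. The paper itself contains no proof of this statement---it is quoted as a known result of \citet{Nemhauser1981}---so there is no internal argument to compare against; your proof (specializing to $\CS = \CS^x$ for the easy inclusion, then monotonicity followed by a telescoping expansion of $F(\CS \cup \CS^x) - F(\CS)$ with each increment bounded via diminishing returns for the substantive one) is the standard proof of that classical fact, and it is carried out correctly, including the degenerate case $\CS^x \subseteq \CS$ where the telescoping sum is empty and monotonicity alone gives $\eta \le F(\CS^x) \le F(\CS)$.
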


Given $y\in \Y$, let $H_{i,y} : 2^{[n]}\rightarrow \mathbb{R}_+$ and $G_{y} : 2^{[n]}\rightarrow \mathbb{R}_+ $ be the set functions defined by 
\begin{equation}\label{def:CH,CG}
	H_{i,y}(\CS) =  
	\frac{ \max_{j \in \CS}v_{ij} }{\max_{j \in \CS} v_{ij} + \max_{k \in [n]} v_{ik}y_k}, ~ \forall~ i \in [m],~\text{and}~{G_{y}(\CS)= \sum_{i=1}^mw_i H_{i,y}(\CS)},~\forall~\CS \subseteq[n].
\end{equation} 
By definitions, if the leader and follower open the facilities induced by $x \in \{0,1\}^n$ and $y \in \{0,1\}^n$, i.e., those in $\mathcal{S}^x = \{j \in [n] : x_j = 1\}$ and $\mathcal{S}^y = \{j \in [n] : y_j = 1\}$, then $H_{i,y}(\CS^x) $ is the probability of customer $i$  patronizing the leader's facilities $\CS^x$, and $G_{y}(\CS^x)$ is the market share captured by the leader.
As such, it follows $H_{i,y}(\CS^x) = h_i(x,y)$, $i \in [m]$, and $G_{y}(\CS^x) = g(x,y)$ for $x,\, y \in \{0,1\}^n$. 
Due to this, $H_{i,y}(\CS^x)$ and $h_i(x,y)$ (respectively, $G_{y}(\CS^x) $ and $g(x,y)$) are used interchangeably in the subsequent discussions.

\begin{proposition}\label{thm:submodularity}
	Given $y \in \Y$, let 
	\begin{equation}\label{def:c^y_{ij}}
		c^y_{ij} := \frac{v_{ij}} 
		{v_{ij} + \max_{k \in [n]}v_{ik}y_k}, \ \forall \  i \in [m],\ \forall \   j \in [n].
	\end{equation}
	Then, the following statements hold:
	(i) for $i \in [m]$ and $\CS \subseteq  [n]$, $H_{i,y}(\CS)=\max_{j \in \CS} c^y_{ij}$;
	(ii) for	 $i \in [m]$, $H_{i,y}$ is nondecreasing and submodular;
	(iii) $G_{y}$ is nondecreasing and submodular. 
\end{proposition}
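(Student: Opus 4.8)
The plan is to prove the three statements in sequence, treating part~(i) as the key reduction that renders parts~(ii) and~(iii) essentially routine. Throughout I fix $i \in [m]$ and $y \in \Y$, and set $a := \max_{k \in [n]} v_{ik} y_k$. Since $y \in \Y$ forces $\sum_j y_j = r \ge 1$ and all attractiveness values satisfy $v_{ik} > 0$, there is at least one index $k$ with $y_k = 1$, so $a \ge v_{ik} > 0$; in particular the ratio functions below are well defined with no degeneracy.

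For part~(i), I would observe that $H_{i,y}(\CS) = \frac{t}{t+a}$ where $t := \max_{j \in \CS} v_{ij}$, while $c^y_{ij} = \frac{v_{ij}}{v_{ij}+a}$. The crucial point is that the scalar map $\phi(t) := \frac{t}{t+a}$ is strictly increasing on $[0,\infty)$ because $a > 0$. Consequently, any index $j \in \CS$ attaining $\max_{j \in \CS} v_{ij}$ simultaneously attains $\max_{j \in \CS} c^y_{ij}$, and therefore $H_{i,y}(\CS) = \phi\bigl(\max_{j \in \CS} v_{ij}\bigr) = \max_{j \in \CS}\phi(v_{ij}) = \max_{j \in \CS} c^y_{ij}$, which is exactly the claimed identity.

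For part~(ii), I would use the representation from~(i): $H_{i,y}$ is a maximum of the nonnegative constants $\{c^y_{ij}\}_{j \in \CS}$. A direct computation gives the closed form of the marginal gain, $\rho_j^{H_{i,y}}(\CS) = \bigl(c^y_{ij} - \max_{k \in \CS} c^y_{ik}\bigr)^{+}$ for $j \in [n]\setminus\CS$, since enlarging the set by $j$ either raises the maximum to $c^y_{ij}$ (when $c^y_{ij}$ exceeds the current maximum) or leaves it unchanged. Now for any $\CS \subseteq \CT \subseteq [n]$ and $j \in [n]\setminus\CT$, monotonicity of the maximum gives $\max_{k \in \CS} c^y_{ik} \le \max_{k \in \CT} c^y_{ik}$, and since $(\cdot)^{+}$ is nondecreasing we obtain $\rho_j^{H_{i,y}}(\CS) \ge \rho_j^{H_{i,y}}(\CT) \ge 0$. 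This single chain of inequalities is precisely the combined nondecreasing-and-submodular condition in the definition, so part~(ii) follows.

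For part~(iii), since $G_{y} = \sum_{i=1}^m w_i H_{i,y}$ with strictly positive weights $w_i$, and each $H_{i,y}$ satisfies the defining chain by part~(ii), I would simply sum that chain over $i$ with the nonnegative coefficients $w_i$: the class of nondecreasing submodular functions is closed under nonnegative linear combinations, so $G_{y}$ inherits both properties. The only genuinely substantive step is part~(i); once $H_{i,y}$ is rewritten as a set-maximum of constants, the remaining arguments reduce to the standard facts that such set-maxima are monotone submodular and that these functions form a cone under addition. The mild point requiring care is confirming $a > 0$ so that $\phi$ is strictly monotone and all ratios are finite, which I dispatch at the outset using $r \ge 1$ and $v_{ik} > 0$.
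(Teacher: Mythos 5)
Your proof is correct and follows essentially the same route as the paper: part (i) via the strict monotonicity of $t \mapsto \frac{t}{t+a}$ (the paper's step (a)), part (ii) by reducing to the fact that a set-maximum of nonnegative constants is nondecreasing and submodular, and part (iii) by closure of this class under nonnegative linear combinations. The only difference is that where the paper cites \citet{Nemhauser1981} for the max-function fact, you verify it directly through the marginal-gain formula $\rho_j^{H_{i,y}}(\CS) = \bigl(c^y_{ij} - \max_{k \in \CS} c^y_{ik}\bigr)^{+}$, which makes the argument self-contained but is not a different approach.
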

\begin{proof}{Proof}
	Statement (i) holds since 
	\begin{equation}\label{tmpeq1}
		H_{i,y}(\CS) =  
		\frac{ \max_{j \in \CS}v_{ij} }{\max_{j \in \CS} v_{ij} + \max_{k \in [n]} v_{ik}y_k} 	\overset{(a)}= \max_{j \in \CS}\frac{ v_{ij} }{v_{ij} + \max_{k \in [n]} v_{ik}y_k}=\max_{j \in \CS} c^y_{ij},
	\end{equation}
	where (a) follows from the reasons that $\max_{k \in [n]} v_{ik}y_k$ is a constant and the maximum of $\{\frac{ v_{ij} }{v_{ij} + \max_{k \in [n]} v_{ik}y_k}\}_{j \in \CS}$ is attained at $j_0 \in \argmax_{j \in \CS}v_{ij}$.
	Statement (ii) follows from \eqref{tmpeq1} and the fact that for $c\in \mathbb{R}^{n}_+$, the max function $M(\CS)=\max_{j\in \CS} c_j $ is nondecreasing and submodular \citep{Nemhauser1981}.
	Statement (iii) follows from the fact that any nonnegative linear  combination of nondecreasing and submodular functions is also nondecreasing and submodular. 
\end{proof}

By \cref{thm:submodularity} (i), we obtain $g(x,y)= \sum_{i=1}^m w_ih_i(x,y)= \sum_{i=1}^mw_i \max_{j \in [n]} c^y_{ij}x_j$ and thus can rewrite the mixed 0-1 set $\CU_y$ in \eqref{def:U_y_max} as
\begin{equation}\label{def:U_y_maxform}
	\CU_y = \left\{(\eta, x) \in \R \times \{0,1\}^n: \eta \le g(x,y)=\sum_{i=1}^mw_i \max_{j \in [n]} c^y_{ij}x_j \right\}.
\end{equation}%
Moreover, using the monotonicity and submodularity of {$G_{y}$} in \cref{thm:submodularity} (iii) 
and the results in \cref{thm:sub-Nemhauser1981},
we can characterize $\U_y$ using the following submodular inequalities
\begin{equation}\label{ineq:submod-G}
	\eta \le G_{y}(\CS) + \sum_{j \in [n] \backslash \CS} \rho_j^{G_{y}}(\CS) x_j, \ \forall \ \CS \subseteq [n].
\end{equation}
That is, $\U_y = \{(\eta, x) \in \R \times \{0,1\}^n: \eqref{ineq:submod-G}\}$. 
Consequently, we can substitute the nonlinear constraints $\eta \leq g(x,y)$ in \eqref{single-level-P1} with the submodular inequalities in \eqref{ineq:submod-G}, and obtain an equivalent \MILP formulation for the \SCFLP:
\begin{align}
	\label{prob:submod-aggr}
	\tag{SF}
	\max_{\eta, \, x \in \X} \left\{ \eta: \eta \le G_{y}(\CS) + \sum_{j \in [n] \backslash \CS} \rho_j^{G_{y}}(\CS) x_j,
	\  \forall \ \CS \subseteq [n], \ \forall \ y \in \Y\right\}.
\end{align}

\section{An improved MILP formulation based on improved submodular  inequalities}\label{section:new-submodular}

In this section, we  shall develop a broader family of valid inequalities for $\CU_y$, which include the submodular inequalities as special cases.
We show that together with the trivial inequities $\bm{0} \leq x \leq \bm{1}$, the improved submodular inequalities can  characterize the convex hull of $\CU_y$. 
This enables to derive a new \MILP formulation for the \SCFLP with a much stronger \LP relaxation.

\subsection{The proposed \MILP formulation}\label{subsec:GSF}
As stated in \cref{thm:sub-Nemhauser1981}, we can use the submodular inequalities \eqref{ineq:submod} to provide a linear characterization of a submodular set $\U$ associated with the nondecreasing and submodular function $F$. 
The following result shows that the linear characterization can be potentially improved in terms of providing a tighter linear relaxation, under the condition that $F$ is a nonnegative combination of nondecreasing and submodular functions $F_1, \ldots, F_m$.

\begin{theorem}\label{thm:decompose-Uy}
Let {$w_1, \ldots, w_m \in  \mathbb{R}_+$} and $F_1, \ldots, F_m: 2^{[n]} \rightarrow \R$ be {a sequence} of nondecreasing and submodular functions, and {for $i \in [m]$, let $f_i : \{0,1\}^n \rightarrow \mathbb{R}$ be such that $f_i(x) = F_i(\CS^x)$ where $x \in \{0,1\}^n$ and  $\mathcal{S}^x = \{j \in [n] : x_j = 1\}$
}.
Define 
\begin{equation*}
\U = \left\{(\eta, x) \in \R \times
\{0,1\}^n: \eta \le \sum_{i =1}^m w_i f_i(x) \right\}.
\end{equation*}
Then $\U = \{ (\eta, x) \in \R \times \{0,1\}^n: \eqref{ineq:thm-decompose Uy}\}$, where 
\begin{equation}\label{ineq:thm-decompose Uy}
\eta \le \sum_{i = 1}^m w_i\left(F_{i}(\CS_i) + \sum_{j \in [n] \backslash \CS_i} \rho_j^{F_{i}}(\CS_i) x_j\right), 
\ \forall \ (\CS_1, \dots, \CS_m) \in 2^{[n]} \times \cdots \times 2^{[n]}.
\end{equation}
\end{theorem}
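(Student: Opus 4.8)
The plan is to prove the claimed set equality by establishing the two inclusions separately: \emph{validity}, i.e.\ that every point of $\U$ satisfies all inequalities \eqref{ineq:thm-decompose Uy}, and \emph{completeness}, i.e.\ that every binary point satisfying \eqref{ineq:thm-decompose Uy} lies in $\U$. The guiding idea is that the statement should reduce, component by component, to the single-function characterization of \cref{thm:sub-Nemhauser1981} applied to each $F_i$ individually, with the nonnegativity of the weights $w_i$ handling the recombination.

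For validity I would fix an arbitrary tuple $(\mathcal{S}_1, \dots, \mathcal{S}_m) \in 2^{[n]} \times \cdots \times 2^{[n]}$ and an arbitrary $(\eta, x) \in \U$, so that $\eta \le \sum_{i=1}^m w_i F_i(\mathcal{S}^x)$. The key step is the component-wise bound
\begin{equation*}
F_i(\mathcal{S}^x) \le F_i(\mathcal{S}_i) + \sum_{j \in [n] \backslash \mathcal{S}_i} \rho_j^{F_i}(\mathcal{S}_i)\, x_j, \quad \forall\, i \in [m],
\end{equation*}
which is precisely the validity half of \cref{thm:sub-Nemhauser1981} applied to the nondecreasing submodular function $F_i$ (equivalently, a telescoping of the marginal gains $\rho_j^{F_i}$ along $\mathcal{S}^x \backslash \mathcal{S}_i$, each bounded above by submodularity, followed by monotonicity $F_i(\mathcal{S}^x) \le F_i(\mathcal{S}_i \cup \mathcal{S}^x)$). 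Multiplying each bound by $w_i \ge 0$ and summing over $i \in [m]$ gives $\sum_{i=1}^m w_i F_i(\mathcal{S}^x) \le \sum_{i=1}^m w_i\bigl(F_i(\mathcal{S}_i) + \sum_{j \in [n]\backslash \mathcal{S}_i}\rho_j^{F_i}(\mathcal{S}_i)x_j\bigr)$, and chaining with $\eta \le \sum_{i=1}^m w_i F_i(\mathcal{S}^x)$ shows that $(\eta,x)$ satisfies the inequality indexed by this tuple; since the tuple was arbitrary, $(\eta,x)$ satisfies all of \eqref{ineq:thm-decompose Uy}.

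For completeness I would take any binary $(\eta,x)$ satisfying every inequality in \eqref{ineq:thm-decompose Uy} and simply instantiate the single tuple $\mathcal{S}_i = \mathcal{S}^x$ for all $i \in [m]$. Since $x_j = 0$ for every $j \in [n] \backslash \mathcal{S}^x$, each inner sum $\sum_{j \in [n] \backslash \mathcal{S}^x}\rho_j^{F_i}(\mathcal{S}^x)\,x_j$ vanishes, and the right-hand side collapses to $\sum_{i=1}^m w_i F_i(\mathcal{S}^x) = \sum_{i=1}^m w_i f_i(x)$. Hence $\eta \le \sum_{i=1}^m w_i f_i(x)$, i.e.\ $(\eta, x) \in \U$, completing the reverse inclusion.

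I expect no serious technical obstacle: the proof is short and the only genuine idea lies in the completeness step, namely recognizing that the inequality tight at a given $x$ is the one obtained by setting all the free sets $\mathcal{S}_i$ equal to the support $\mathcal{S}^x$. This is also where the improvement over the classical aggregate inequalities becomes transparent, and worth remarking on: applying \cref{thm:sub-Nemhauser1981} to the aggregate $F = \sum_{i} w_i F_i$ forces a \emph{common} set $\mathcal{S}$ across all components, whereas \eqref{ineq:thm-decompose Uy} permits \emph{independent} choices $\mathcal{S}_i$, so the aggregate family appears as the special case $\mathcal{S}_1 = \cdots = \mathcal{S}_m$ and the new family is therefore at least as strong as a linear relaxation, while still cutting off nothing integral. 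The only care needed is to invoke \cref{thm:sub-Nemhauser1981} at the level of each individual $F_i$ rather than the aggregate, and to keep the roles of the support $\mathcal{S}^x$ (determined by $x$) and the free sets $\mathcal{S}_i$ notationally distinct.
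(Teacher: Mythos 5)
Your proof is correct, but its execution differs from the paper's. The paper proves \cref{thm:decompose-Uy} by lifting and projection rather than by your direct two-inclusion verification: it introduces the extended set $\CZ = \{(\eta,\zeta,x)\in\R\times\R^m\times\{0,1\}^n : \eta = \sum_{i=1}^m w_i\zeta_i,\ (\zeta_i,x)\in\CZ_i\ \forall\, i\in[m]\}$ with $\CZ_i = \{(\zeta_i,x) : \zeta_i\le f_i(x)\}$, observes that $\proj_{\eta,x}(\CZ)=\U$, replaces each $\CZ_i$ by its submodular-inequality description via \cref{thm:sub-Nemhauser1981}, and then eliminates the $\zeta$ variables to arrive at \eqref{ineq:thm-decompose Uy}. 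The key ingredient is the same in both arguments --- \cref{thm:sub-Nemhauser1981} invoked for each $F_i$ \emph{individually}, with $w_i\ge 0$ enabling the recombination --- so the routes agree in substance; the differences lie in what each makes explicit. Your version is the more elementary one: it shows exactly where nonnegativity of the weights enters (summing the component bounds in the validity direction) and where integrality of $x$ enters (only in the completeness direction, through the diagonal choice $\CS_i=\CS^x$ that annihilates the inner sums), both of which sit inside the paper's unspelled-out projection step. Your argument is also slightly more robust: the paper's intermediate claim $\proj_{\eta,x}(\CZ)=\U$ requires some $w_i>0$ (if all $w_i=0$, the projection forces $\eta=0$ while $\U$ allows $\eta\le 0$), whereas your proof never needs an exact representation $\eta=\sum_i w_i\zeta_i$ and covers that degenerate case as stated. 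What the paper's formulation buys in exchange is structural reuse: the lifted set with per-component value variables $\zeta_i$ is precisely the template used later for the convex-hull result (\cref{lemma:decompose-convex-hull} in the proof of \cref{thm:submod-convexhull}) and for the extended formulation of \cref{section:ZLP}, so phrasing this first result in projection language makes those subsequent developments immediate. Your closing observation --- that the classic inequalities are the special case $\CS_1=\cdots=\CS_m$ --- is also made by the paper, right after the theorem.
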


\begin{proof}{Proof}
Let 
\begin{equation}\label{set:Uy-rewrite-separable}
\CZ = \left\{(\eta,\zeta, x) \in \R \times \R^{m} \times\{0,1\}^n: ~ \eta = \sum_{i =1}^m w_i\zeta_i,~
(\zeta_i, x) \in \CZ_{i}, \ \forall \  i \in [m]\right\},
\end{equation}
where $\CZ_{i} := \left\{ (\zeta_i, x) \in \R \times \{0,1\}^n: \zeta_i \le  f_i(x)\right\}$ for  $i \in [m]$.
Then $\proj_{\eta,x} (\CZ) =  \U$.
For each $i \in [m]$, as $F_{i}$ is nondecreasing and submodular, from the results in \cref{thm:sub-Nemhauser1981}, 
$\CZ_i$ can be equivalently characterized by submodular inequalities,
i.e., {$\CZ_{i} = \{(\zeta_i, x)\in \R 
\times \{0,1\}^n: \eqref{ineq:submod-Hi}\}$}, where
\begin{equation}\label{ineq:submod-Hi}
\zeta_i \le F_{i}(\CS_i) + \sum_{j \in [n] \backslash \CS_i} \rho_j^{F_{i}}(\CS_i) x_j,
\ \forall \ \CS_i \in 2^{[n]}. 
\end{equation}
Thus, $\CZ = \{(\eta,\zeta,x) \in \R \times \R^m \times \{0,1\}^n: \eta = \sum_{i =1}^m w_i\zeta_i,~\eqref{ineq:submod-Hi}\}$. 
Projecting variables $\zeta$ out from $\CZ$, we obtain 
$\U = \{(\eta, x) \in \R \times \{0,1\}^n: \eqref{ineq:thm-decompose Uy}\}$. 
\end{proof}

For each $y \in \Y$, function {$G_{y}$} is a nonnegative
combination of nondecreasing and submodular functions ${H_{i,y}}$, $i \in [m]$.
Thus, it follows from \cref{thm:decompose-Uy} that $\U_y = \{(\eta, x) \in \R \times \{0,1\}^n: \eqref{ineq:submod-G-D}\}$ where 
\begin{equation}\label{ineq:submod-G-D}
\eta \le \sum_{i=1}^m w_i \left(H_{i,y}(\CS_i) + \sum_{j \in [n] \backslash \CS_i} \rho_j^{H_{i,y}}(\CS_i) x_j\right), 
\ \forall \ (\CS_1, \dots, \CS_m) \in 2^{[n]} \times \cdots \times 2^{[n]}.
\end{equation}
Consequently, {we can substitute $\eta \leq g(x,y)$ in \eqref{single-level-P1} with the improved submodular inequalities in \eqref{ineq:submod-G-D} 
 and obtain the following equivalent \MILP formulation:}
{\small 
\begin{equation}\label{prob:submod-disaggr} 
\begin{aligned}
\max_{\eta, \, x \in \X} 
& \left\{
\eta: \eta \le \sum_{i=1}^m w_i  \left(H_{i,y}(\CS_i) + \sum_{j \in [n] \backslash \CS_i} 
\rho_j^{H_{i,y}}(\CS_i) x_j\right), 
	 \forall \ (\CS_1, \dots, \CS_m) \in 2^{[n]} \times \cdots \times 2^{[n]},
~\forall~y \in \Y \right\}.
\end{aligned}
\end{equation}
 }%
Observe that by setting $\CS_1= \cdots =  \CS_m= \CS$, inequalities \eqref{ineq:submod-G-D} reduce to the inequalities \eqref{ineq:submod-G}, 
and thus the improved submodular  inequalities in \eqref{ineq:submod-G-D} include the submodular inequalities in \eqref{ineq:submod-G}.
As a result, the \LP relaxation of problem \eqref{prob:submod-disaggr} is at least as tight as {that of problem} \eqref{prob:submod-aggr}. 
In Appendix \ref{appendix1}, we provide an example to illustrate that the \LP relaxation of problem  \eqref{prob:submod-disaggr}
can be strictly stronger than that of problem \eqref{prob:submod-aggr}.

\subsection{Strength of  formulation \eqref{prob:submod-disaggr}}
Next, we characterize the  strength of  formulation \eqref{prob:submod-disaggr}. 
To achieve this, we need the following result.
\begin{lemma}\label{thm:sub-Nemhauser1981-max}
For $c \in \mathbb{R}^n_+$, let $f(x) =\max_{j \in [n]} c_j x_j $,
{$F$: $2^{[n]} \rightarrow \R_+$ be such that $F(\CS^x)= f(x)$  where $x \in \{0,1\}^n$ and $\mathcal{S}^x = \{j \in [n] : x_j = 1\}$,}
and $\U$ be defined in \eqref{def:general U}. 
Then $F$ is a nondecreasing and submodular function 
and the submodular inequalities in \eqref{ineq:submod} reduce to 
\begin{equation}\label{ineq:submod-explicit}
\eta \leq c_{\ell} +  \sum_{j=1}^{n} \left( c_j -
c_{\ell}\right)^+ x_j, \ \forall \ {\ell \in [n+1]},
\end{equation}
where $c_{n+1}=0$ and ${(\cdot)}^{+} = \max\{0, \cdot\}$.
Moreover, $\conv(\CU) 
= \{(\eta, x) \in \R \times [0,1]^n: \eqref{ineq:submod-explicit}\}$. 
\end{lemma}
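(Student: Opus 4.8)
The plan is to prove the three assertions in turn, reusing \cref{thm:sub-Nemhauser1981}. First I would record the marginal gains. Writing $c(\CS) := \max_{k\in\CS} c_k$ (with $c(\varnothing)=0$), the value $F(\CS)=c(\CS)$ is visibly nondecreasing in $\CS$, and $\rho_j^F(\CS)=F(\CS\cup\{j\})-F(\CS)=(c_j-c(\CS))^+$; since $c(\cdot)$ is nondecreasing this marginal gain is nonincreasing in $\CS$ and nonnegative, which is exactly monotonicity and submodularity (this also follows from the fact, already invoked in the proof of \cref{thm:submodularity}, that the max function is nondecreasing and submodular). Substituting these marginal gains into \eqref{ineq:submod} for a set $\CS$ gives $\eta \le c(\CS)+\sum_{j\in[n]\setminus\CS}(c_j-c(\CS))^+ x_j$. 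The key observation for the reduction is that every term with $j\in\CS$ would contribute $(c_j-c(\CS))^+=0$, because $c_j\le c(\CS)$; hence the sum may be harmlessly extended to all $j\in[n]$, so the inequality depends on $\CS$ only through the scalar $c(\CS)$. As $\CS$ ranges over $2^{[n]}$, $c(\CS)$ takes only the values $c_1,\dots,c_n$ (for nonempty $\CS$) and $0=c_{n+1}$ (for $\CS=\varnothing$); setting $c_\ell=c(\CS)$ therefore collapses \eqref{ineq:submod} exactly to the $n+1$ inequalities \eqref{ineq:submod-explicit}, and conversely each \eqref{ineq:submod-explicit} is the inequality \eqref{ineq:submod} for $\CS=\{\ell\}$ (resp. $\CS=\varnothing$, since the $j=\ell$ term vanishes). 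This proves parts (i)--(ii), so that $\U=\{(\eta,x)\in\R\times\{0,1\}^n:\eqref{ineq:submod-explicit}\}$.

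For the convex-hull statement let $P:=\{(\eta,x)\in\R\times[0,1]^n:\eqref{ineq:submod-explicit}\}$. The inclusion $\conv(\U)\subseteq P$ is immediate: each inequality in \eqref{ineq:submod-explicit} is valid for $\U$ and $\{0,1\}^n\subseteq[0,1]^n$, so $\U\subseteq P$, and $P$ is convex. For the reverse inclusion I would exploit the hypograph structure. Since $\eta$ is bounded only from above, $(-1,\boldsymbol{0})$ is a recession direction of both $P$ and $\conv(\U)$, and $\U=\bigcup_{x\in\{0,1\}^n}\{x\}\times(-\infty,f(x)]$, whence $\conv(\U)=\conv\{(f(\bar x),\bar x):\bar x\in\{0,1\}^n\}+\R_+\cdot(-1,\boldsymbol{0})$. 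Writing $\psi(x):=\min_{\ell\in[n+1]}\big(c_\ell+\sum_{j=1}^n(c_j-c_\ell)^+x_j\big)$ for the upper envelope of $P$, and $\hat f$ for the concave closure of $f$ over $\{0,1\}^n$ (so that $\conv(\U)=\{(\eta,x):x\in[0,1]^n,\ \eta\le\hat f(x)\}$), the reverse inclusion reduces to the single identity $\hat f(x)=\psi(x)$ on $[0,1]^n$. One direction, $\hat f\le\psi$, is free: each affine function $c_\ell+\sum_j(c_j-c_\ell)^+x_j$ majorizes $f$ on $\{0,1\}^n$ and is concave, hence dominates the smallest such concave majorant $\hat f$.

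It remains to show $\hat f(x^\ast)\ge\psi(x^\ast)$ for every $x^\ast\in[0,1]^n$, that is, to exhibit a distribution over $\{0,1\}^n$ with mean $x^\ast$ whose expected $f$-value attains $\psi(x^\ast)$. I would first reorder the indices so that $c_1\ge\cdots\ge c_n\ge c_{n+1}=0$ and identify $\psi(x^\ast)$ with the value of the fractional-knapsack LP $\max\{\sum_j c_j z_j:\ \sum_j z_j\le1,\ 0\le z\le x^\ast\}$, whose optimum is obtained by the greedy fill in this order; the two agree by LP duality, since the dual objective $\min_{\mu\ge0}\big(\mu+\sum_j(c_j-\mu)^+x^\ast_j\big)$ is piecewise linear convex with breakpoints only at $\mu\in\{c_1,\dots,c_n,0\}$. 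Then I would build the required convex combination from the nested threshold sets determined by the greedy fill --- the fully loaded prefix, the single partially loaded coordinate, and the empty tail --- assigning weights so that the coordinatewise means reproduce $x^\ast$ while the weighted $f$-values telescope to $\sum_j c_j z^\ast_j=\psi(x^\ast)$. The main obstacle is precisely this explicit combinatorial decomposition: verifying that the staircase of nested sets can be weighted to match $x^\ast$ exactly while its expected $f$-value equals the greedy knapsack value. Once this is established, $\hat f=\psi$, hence $P\subseteq\conv(\U)$; equivalently, every extreme point of $P$ has $x\in\{0,1\}^n$ and therefore lies in $\U$, completing the proof.
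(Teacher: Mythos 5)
Your handling of the first two assertions is correct and self-contained: the computation $\rho_j^F(\CS)=(c_j-\max_{k\in\CS}c_k)^+$, the observation that terms with $j\in\CS$ vanish so the sum may be extended over all of $[n]$, and the collapse of $2^{[n]}$ onto the $n+1$ values $c_\ell$, $\ell\in[n+1]$, prove what the paper merely cites (Theorem~8 of \citet{Nemhauser1981}). For the convex-hull claim, the inclusion $\conv(\CU)\subseteq P$, the reduction to the identity $\hat f=\psi$, the direction $\hat f\le\psi$ via concave majorants, and the identification of $\psi(x^*)$ with the fractional-knapsack LP value by duality are all sound (and would be genuinely additional content, since the paper's own proof of this part is again a citation, to Theorem~3.8 of \citet{Chen2024}).

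The gap is exactly the step you flag as the ``main obstacle,'' and the structure you propose for it --- \emph{nested} threshold sets --- cannot work. If a chain $\CS_1\subseteq\cdots\subseteq\CS_t$ with positive weights reproduces the mean $x^*$, then each $\CS_\ell$ is forced to be a level set of $x^*$, so the weighted value $\sum_\ell\lambda_\ell F(\CS_\ell)$ is the Lov\'asz extension of $F$ at $x^*$; since $F$ is submodular, this is its \emph{convex} envelope, not the concave one, and is in general strictly below $\psi(x^*)$. Concretely, for $n=2$, $c=(2,1)$, $x^*=(0.6,0.6)$ one has $\psi(x^*)=1.6$, but every chain decomposition with mean $x^*$ must put weight $0.6$ on $(1,1)$ and $0.4$ on $(0,0)$, giving value $1.2$. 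What is needed is the opposite of nesting: the large-$c_j$ coordinates must occupy \emph{disjoint} probability mass, so that each is the maximum whenever it is present. With $c_1\ge\cdots\ge c_n$, $s_j:=\sum_{l\le j}x^*_l$, and $k$ the greedy breakpoint ($s_k<1\le s_{k+1}$, with the obvious adjustments when $x^*_1=1$ or $s_n<1$), assign to each $j\le k$ the interval $A_j=[s_{j-1},s_j)$, to $j=k+1$ the set $[s_k,1)\cup[0,\,x^*_{k+1}-(1-s_k))$, and to each $j>k+1$ an arbitrary subset of $[0,1)$ of measure $x^*_j$; for $U$ uniform on $[0,1)$, the point $X(U)\in\{0,1\}^n$ with $X_j(U)=1$ iff $U\in A_j$ takes finitely many values, has mean $x^*$, and satisfies $\max_j c_jX_j(U)=c_j$ on $[s_{j-1},s_j)$ and $=c_{k+1}$ on $[s_k,1)$, so its expected $f$-value is $\sum_{j\le k}c_jx^*_j+c_{k+1}(1-s_k)=\psi(x^*)$. (In the example this is the non-nested combination $0.2\,(1,1)+0.4\,(1,0)+0.4\,(0,1)$.) With this replacement your argument closes; as written, the decisive decomposition is both missing and mis-specified.
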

\begin{proof}{Proof}
	The first part can be obtained from Theorem 8 in \citet{Nemhauser1981}  and the second part can be obtained from Theorem 3.8 in \citet{Chen2024}. 
\end{proof}

Combining  \cref{thm:decompose-Uy}, \cref{thm:sub-Nemhauser1981-max}, and $g(x,y)=  \sum_{i=1}^mw_i \max_{j \in [n]} c^y_{ij}x_j$ (as implied by \cref{thm:submodularity}), the submodular inequalities in \eqref{ineq:submod-G-D} can be written as 
\begin{equation}\label{ineq:submod-explicit-G-D}
\eta \leq \sum_{i=1}^m w_i \left( c^y_{i\ell_i} +  \sum_{j=1}^{n} \left( c^y_{ij} -
c^y_{i\ell_i}\right)^+ x_j\right), \ \forall \ (\ell_1, \ldots, \ell_m)  \in [n+1]^m,
\end{equation}
where $c^y_{i(n+1)} = 0$ for all $i \in [m]$ and $y \in\Y$, and  $[n+1]^m = [n+1]\times \dots \times[n+1]$ denotes the Cartesian product.
Thus, formulation 
\eqref{prob:submod-disaggr} reduces to
	\begin{align}	\label{prob:submod-disaggr-card1} \tag{GSF}
		& \max_{\eta, \, x \in \X}   \left\{
		\eta: \eta \leq \sum_{i=1}^m w_i \left( c^y_{i\ell_i} +  \sum_{j=1}^{n} \left( c^y_{ij} -
		c^y_{i\ell_i}\right)^+ x_j\right), \ 	\forall \ (\ell_1, \dots, \ell_m) \in [n+1]^m, \ \forall~ y \in \Y \right\}. 
	\end{align}

The following theorem further characterizes  
the strength of formulation \eqref{prob:submod-disaggr-card1} (or \eqref{prob:submod-disaggr}) in terms of 
providing a strong \LP relaxation bound. 
The proof is provided in Appendix \ref{sec:appendix-pf-convexhull}.

\begin{theorem}\label{thm:submod-convexhull}
For all $y \in \Y$, 
it follows that
\begin{equation}\label{convU}
\conv(\CU_y) 
=\{(\eta, x) \in \R \times [0,1]^n: \eqref{ineq:submod-explicit-G-D}\}.
\end{equation}
\end{theorem}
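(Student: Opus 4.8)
The plan is to prove the two inclusions of \eqref{convU} separately, exploiting the product structure already visible in \cref{thm:decompose-Uy} together with the single-ratio convex-hull description from \cref{thm:sub-Nemhauser1981-max}. The easy direction is $\supseteq$: every inequality in \eqref{ineq:submod-explicit-G-D} is valid for $\CU_y$ (it is a nonnegative combination, with weights $w_i$, of inequalities valid for the sets $\{(\zeta_i,x):\zeta_i\le \max_j c^y_{ij}x_j\}$ by \cref{thm:sub-Nemhauser1981-max}), and the bounds $\bm 0\le x\le\bm 1$ are trivially valid; since the right-hand set is closed and convex and contains $\CU_y$, it contains $\conv(\CU_y)$. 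So I would state that direction in a line and spend the work on $\subseteq$.

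For $\subseteq$, my first move is to introduce the lifted set already used in the proof of \cref{thm:decompose-Uy}, namely
\[
\CZ_y=\Bigl\{(\eta,\zeta,x)\in\R\times\R^m\times\{0,1\}^n:\ \eta=\sum_{i=1}^m w_i\zeta_i,\ \zeta_i\le \max_{j\in[n]}c^y_{ij}x_j\ \forall i\in[m]\Bigr\},
\]
whose projection onto $(\eta,x)$ is $\CU_y$. First I would show that the polyhedron on the right of \eqref{convU} equals the projection onto $(\eta,x)$ of
\[
\conv_{\zeta\text{-decoupled}}:=\Bigl\{(\eta,\zeta,x):\ \eta=\sum_i w_i\zeta_i,\ x\in[0,1]^n,\ (\zeta_i,x)\in\conv(\CU^i_y)\ \forall i\in[m]\Bigr\},
\]
where $\CU^i_y=\{(\zeta_i,x):\zeta_i\le\max_j c^y_{ij}x_j\}$ and each $\conv(\CU^i_y)$ is described by \eqref{ineq:submod-explicit} via \cref{thm:sub-Nemhauser1981-max}; eliminating the $\zeta_i$ by Fourier--Motzkin (each $\zeta_i$ appears with coefficient $+w_i$ in the single equation and is bounded above by the linear inequalities of \cref{thm:sub-Nemhauser1981-max}) produces exactly the inequalities \eqref{ineq:submod-explicit-G-D} over all $(\ell_1,\dots,\ell_m)\in[n+1]^m$. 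This reduces the theorem to the claim $\operatorname{proj}_{\eta,x}\bigl(\conv_{\zeta\text{-decoupled}}\bigr)=\conv(\CU_y)$.

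The heart of the argument, and the step I expect to be the main obstacle, is establishing that this decoupled convex hull is genuinely the convex hull of $\CU_y$ and not something larger — i.e. that sharing the same binary vector $x$ across the $m$ single-ratio sets does not lose anything when we convexify each factor independently. The clean way to see this is to fix a point $(\bar\eta,\bar x)$ in the right-hand polyhedron of \eqref{convU} with $\bar x\in[0,1]^n$ and exhibit it as a convex combination of points of $\CU_y$. I would argue that for each fixed integral $x^*\in\{0,1\}^n$ the slice of $\CU_y$ is a halfline $\eta\le g(x^*,y)$, and that because all $m$ factors share the same $x$, the maximum of $\sum_i w_i\zeta_i$ over the decoupled set at a fixed fractional $\bar x$ is attained by choosing, in each factor, the \emph{same} supporting index structure; concretely, the value of the right-hand side of \eqref{convU} at $\bar x$ equals $\min_{(\ell_1,\dots,\ell_m)}\sum_i w_i(c^y_{i\ell_i}+\sum_j(c^y_{ij}-c^y_{i\ell_i})^+\bar x_j)$, which by \cref{thm:sub-Nemhauser1981-max} coincides with $\sum_i w_i\cdot(\text{upper concave envelope of }\max_j c^y_{ij}x_j)$ evaluated at $\bar x$. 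Since the concave envelope of each $\max_j c^y_{ij}x_j$ over $[0,1]^n$ is attained by a convex combination of vertices of the cube, and these envelopes are additively separable across $i$ while sharing the common domain point $\bar x$, I can take a \emph{common} representation $\bar x=\sum_k \lambda_k x^{(k)}$ with $x^{(k)}\in\{0,1\}^n$ that simultaneously certifies all $m$ envelopes — this is the crux, and it should follow from the fact that the concave envelope of a function of the form $\max_j c_j x_j$ on $[0,1]^n$ is a polyhedral concave function whose optimal vertex decomposition at $\bar x$ can be chosen independently of the coefficient vector $c$ when only the ordering threshold $\ell$ matters. Pushing this common decomposition through yields $\bar\eta\le\sum_k\lambda_k g(x^{(k)},y)$, so $(\bar\eta,\bar x)\in\conv(\CU_y)$, completing $\subseteq$.
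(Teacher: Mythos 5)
Your architecture is, up to packaging, the same as the paper's: the paper also lifts to one $\zeta_i$ per customer, describes each single-customer hull via \cref{thm:sub-Nemhauser1981-max}, and reduces \eqref{convU} to the claim that the decoupled hull $\{(\eta,\zeta,x):\eta=\sum_{i=1}^m w_i\zeta_i,\ (\zeta_i,x)\in\conv(\CZ_i)\ \forall i\in[m]\}$, where $\CZ_i=\{(\zeta_i,x)\in\R\times\{0,1\}^n:\zeta_i\le\max_{j\in[n]}c^y_{ij}x_j\}$, projects onto $\conv(\CU_y)$ (this is its \cref{lemma:decompose-convex-hull}). Your containment direction (validity of \eqref{ineq:submod-explicit-G-D} and of the trivial bounds) and your Fourier--Motzkin elimination of the $\zeta_i$ are both correct. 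The gap is exactly the step you flag as the crux: the existence of a \emph{common} decomposition $\bar x=\sum_k\lambda_k x^{(k)}$, $x^{(k)}\in\{0,1\}^n$, that simultaneously attains all $m$ concave envelopes. You assert that an envelope-attaining decomposition ``can be chosen independently of the coefficient vector''; this assertion is never proved, and it is in fact false.

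Here is a counterexample. Take $n=3$, $w_1=w_2=w_3=1$, $c_1=(0.9,0.1,0.9)$, $c_2=(0.1,0.9,0.05)$, $c_3=(0.05,0.9,0.1)$, and $\bar x=(0.4,0.9,0.4)$. Writing $\mathrm{env}_i$ for the concave envelope of $x\mapsto\max_{j}c_{ij}x_j$ over $\{0,1\}^3$, \cref{thm:sub-Nemhauser1981-max} gives $\mathrm{env}_1(\bar x)=0.1+0.8\cdot 0.4+0.8\cdot 0.4=0.74$ and $\mathrm{env}_2(\bar x)=\mathrm{env}_3(\bar x)=0.1+0.8\cdot 0.9=0.82$, so the point $(\bar\eta,\bar x)$ with $\bar\eta=2.38$ satisfies every inequality \eqref{ineq:submod-explicit-G-D}. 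But no common decomposition exists: attaining $\mathrm{env}_2$ forces every set of the decomposition that misses facility $2$ (total weight $1-\bar x_2=0.1$) to contain facility $1$; attaining $\mathrm{env}_3$ forces those same sets to contain facility $3$; but then the weight of sets meeting $\{1,3\}$ is at most $0.3+0.3+0.1=0.7<0.8$, so $\mathrm{env}_1$, which needs weight $0.8$ on such sets, cannot also be attained. Moreover no trade-off reaches $2.38$: with $g(\CS)=\sum_{i}\max_{j\in\CS}c_{ij}$ and $(\pi_0,\pi_1,\pi_2,\pi_3)=(0.25,0.8,1.65,0.8)$ one checks $g(\CS)\le\pi_0+\sum_{j\in\CS}\pi_j$ for all eight $\CS\subseteq[3]$, whence every decomposition satisfies
\begin{equation*}
\sum_k\lambda_k\,g\bigl(\CS^{x^{(k)}}\bigr)\ \le\ \pi_0+\sum_{j=1}^{3}\pi_j\bar x_j\ =\ 2.375\ <\ 2.38,
\end{equation*}
so every point of $\conv(\CU_y)$ with $x$-component $\bar x$ has $\eta\le 2.375$. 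Hence $(\bar\eta,\bar x)$ lies in the right-hand side of \eqref{convU} but not in $\conv(\CU_y)$, and the inclusion you set out to prove fails. (These data are realizable in the \SCFLP: take $n=4$, $r=1$, $y=\be_4$, $v_{i4}=1$ and $v_{ij}=c_{ij}/(1-c_{ij})$ for $j\in[3]$, so that $c^y_{ij}=c_{ij}$ and $c^y_{i4}=\tfrac12$; the fourth facility is immaterial at a point with $\bar x_4=0$, since its thresholds $0.82$ and $0.86$ exceed the minima above.)

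You should not conclude that you merely missed the intended easy argument: the paper's own proof breaks at precisely the same spot. Step (a) in the proof of \cref{lemma:decompose-convex-hull} invokes the inclusion $\conv(\CZ_i)\subseteq\{(\zeta_i,x):\zeta_i\le f_i(x)\}$, which runs the wrong way for a convex, non-affine $f_i$: $\conv(\CZ_i)$ is the hypograph of the concave envelope of $f_i$ restricted to $\{0,1\}^n$, and that envelope strictly exceeds $f_i$ at fractional points (e.g., for $f_i(x)=\max\{2x_1,x_2\}$ the point $(\zeta_i,x)=(1.5,\tfrac12,\tfrac12)$ lies in $\conv(\CZ_i)$ while $f_i(\tfrac12,\tfrac12)=1$). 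So you located the genuine obstruction correctly; it is not a missing lemma but a false claim, and only the direction $\conv(\CU_y)\subseteq\{(\eta,x)\in\R\times[0,1]^n:\eqref{ineq:submod-explicit-G-D}\}$ survives.
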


\section{A strong extended formulation}\label{section:ZLP}
In Section \ref{section:new-submodular}, we develop an \MILP formulation  \eqref{prob:submod-disaggr-card1},
which is based on a linear characterization of $\conv(\U_y)$ for each $y \in \Y$. 
Each of these characterizations, however, uses an exponential number of improved submodular inequalities in \eqref{ineq:submod-explicit-G-D}.
In this section, our goal is to develop an alternative formulation for the \SCFLP 
that does not suffer from the exponential number of improved submodular inequalities.
In particular, we first derive a compact extended formulation for $\conv(\U_y)$ and  develop an extended \MILP formulation whose \LP relaxation is as tight as that of \eqref{prob:submod-disaggr-card1}.
Then we present a preprocessing technique to eliminate the exponential number of variables in the extended formulation {(due to the exponential number of points in $\Y$)}, which paves the way for solving the extended formulation by a \BnC algorithm.

\subsection{A compact extended formulation for $\conv(\CU_{y})$}\label{subsection:4.1} 
{Given $y \in \Y$, to derive an extended formulation for $\conv(\CU_y)$,} let us introduce binary variables  $\{z^y_{ij}\}_{i\in [m],~j \in [n]}$ and 
\begin{equation}\label{def:W_Y}
	\begin{aligned}
			\W_{y}  =  \left\{{(\eta,x,z^y) \in \R \times \{0,1\}^n \times \{0,1\}^{mn}}: 
			\ \eqref{z-linear-3}, \ \eqref{z-linear-4},~\eqref{z-linear-1} \right\},
		\end{aligned}
\end{equation}
where
	\begin{align}
			& z^y_{ij} \leq x_j,\ \forall \  i \in [m], \ \forall \ j \in [n], \label{z-linear-3}\\
			&\sum_{j=1}^nz^y_{ij} \leq 1, \  \forall \  i \in [m],\label{z-linear-4}\\
			&\eta \leq \sum_{i=1}^m w_i \sum_{j=1}^n c^y_{ij} z^y_{ij}. \label{z-linear-1}
	\end{align}
Here, we denote $z_{ij}^y = 1$ if  the customer $i$ patronizes facility $j$  and $z_{ij}^y = 0$ otherwise.
Constraints \eqref{z-linear-3} 
ensure that the customers can only be served by the open facilities.
Constraints 
\eqref{z-linear-4}
guarantee that each customer patronizes at most one open facility.
Together with \eqref{z-linear-3} and \eqref{z-linear-4}, the right-hand side of \eqref{z-linear-1} characterizes the market share captured by the leader {(if the follower's solution is $y$)}.
The following theorem shows that $\W_y$ is an extended formulation of $\CU_y$ defined in \eqref{def:U_y_max}. {The proof  is provided in Appendix \ref{sec:appendix-pf-proj-Wy}.}
\begin{theorem}\label{thm:proj-Wy}
	For $y \in \Y$, $\proj_{\eta, x} (\W_{y}) = \CU_{y}$. 
\end{theorem}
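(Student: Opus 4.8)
The plan is to prove the projection equality $\proj_{\eta,x}(\W_y) = \CU_y$ by establishing the two inclusions separately, exploiting the fact that $\CU_y$ has the explicit form \eqref{def:U_y_maxform} with $g(x,y) = \sum_{i=1}^m w_i \max_{j\in[n]} c^y_{ij} x_j$. The key observation driving both directions is that the auxiliary variables $z^y_{ij}$, subject to \eqref{z-linear-3} and \eqref{z-linear-4}, are designed so that for each fixed $x \in \{0,1\}^n$ the best achievable value of the right-hand side $\sum_{i=1}^m w_i \sum_{j=1}^n c^y_{ij} z^y_{ij}$ in \eqref{z-linear-1} is exactly $g(x,y)$. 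Since $c^y_{ij} \geq 0$ and $w_i > 0$, each customer $i$ should place its single allowed unit of $z^y_{i\cdot}$ (via \eqref{z-linear-4}) on the open facility $j \in \CS^x$ with the largest coefficient $c^y_{ij}$, which recovers $\max_{j \in \CS^x} c^y_{ij} = \max_{j\in[n]} c^y_{ij} x_j$.

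For the inclusion $\proj_{\eta,x}(\W_y) \subseteq \CU_y$: suppose $(\eta, x, z^y) \in \W_y$. Since $z^y_{ij} \le x_j$ forces $z^y_{ij} = 0$ whenever $x_j = 0$, and since $\sum_j z^y_{ij} \le 1$ with $z^y_{ij} \in \{0,1\}$ means at most one $z^y_{ij}$ equals one, we bound
\begin{equation*}
\sum_{j=1}^n c^y_{ij} z^y_{ij} \le \max_{j \in [n]} c^y_{ij} x_j
\end{equation*}
for each $i$. Combining with \eqref{z-linear-1} and nonnegativity of the weights $w_i$ yields $\eta \le \sum_{i=1}^m w_i \max_{j\in[n]} c^y_{ij} x_j = g(x,y)$, so $(\eta, x) \in \CU_y$.

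For the reverse inclusion $\CU_y \subseteq \proj_{\eta,x}(\W_y)$: take any $(\eta, x) \in \CU_y$, so that $\eta \le g(x,y)$. I would construct a feasible $z^y$ explicitly. If $\CS^x = \varnothing$ (i.e.\ $x = \boldsymbol{0}$), set $z^y = \boldsymbol{0}$; then $g(x,y) = 0$ and \eqref{z-linear-1} reads $\eta \le 0$, which holds. Otherwise, for each $i \in [m]$ pick an index $j_i \in \argmax_{j \in \CS^x} c^y_{ij}$ and set $z^y_{ij_i} = 1$ and $z^y_{ij} = 0$ for all $j \ne j_i$. Constraints \eqref{z-linear-3} hold because $j_i \in \CS^x$ implies $x_{j_i} = 1$; constraint \eqref{z-linear-4} holds since exactly one entry per $i$ is set to one; and by construction $\sum_{j=1}^n c^y_{ij} z^y_{ij} = c^y_{ij_i} = \max_{j\in[n]} c^y_{ij} x_j$, so the right-hand side of \eqref{z-linear-1} equals $g(x,y) \ge \eta$. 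Hence $(\eta, x, z^y) \in \W_y$ and $(\eta, x) \in \proj_{\eta,x}(\W_y)$.

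The argument is essentially routine once the combinatorial role of the $z^y$ variables is recognized, so there is no deep obstacle; the one point requiring a little care is the boundary treatment when a customer has no accessible open facility (the $\CS^x = \varnothing$ case, or more generally customers for which every $c^y_{ij}$ with $x_j=1$ could be handled uniformly since $c^y_{ij} \ge 0$). I would make sure the $\max$ over an empty set is interpreted consistently with the convention $c^y_{i(n+1)} = 0$ already used in \eqref{ineq:submod-explicit-G-D}, so that the identity $\max_{j\in[n]} c^y_{ij} x_j = 0$ when $x = \boldsymbol{0}$ is unambiguous. Everything else follows from the nonnegativity of the coefficients and the one-unit-per-customer budget.
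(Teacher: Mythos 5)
Your proposal is correct and follows essentially the same route as the paper's own proof: the forward inclusion via the per-customer bound $\sum_{j} c^y_{ij} z^y_{ij} \le \max_{j\in[n]} c^y_{ij} x_j$ (using that at most one $z^y_{ij}$ is one and only at open facilities), and the reverse inclusion via the explicit construction $z^y_{i\cdot} = \be_{j_i}^\top$ with $j_i$ a maximizer of $c^y_{ij}$ over the open facilities, with the same special treatment of the $x = \boldsymbol{0}$ case. No gaps; the argument matches the paper's proof in Appendix C step for step.
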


Let
\begin{equation*}
	\W_{y}^{\text{L}} :=  \left\{(\eta,x,z^{y}) \in \R \times  [0,1]^n \times [0,1]^{mn}: 
	\eqref{z-linear-3}, \ \eqref{z-linear-4},\ \eqref{z-linear-1}\right\}
\end{equation*}
be the linear relaxation of $\W_{y}$. 
Next, we further analyze the relations of $\proj_{\eta, x} (\W_{y}^\text{L})$ and $\conv(\U_{y})$. 
Let 
\begin{equation}\label{def:g_i}
	\begin{aligned}
		&g_{i,\ell_i}^{y}(x) := c^{y}_{i\ell_i}+  \sum_{j=1}^{n} \left( c^{y}_{ij}-
		c^{y}_{i\ell_i}\right)^+ x_j, ~\forall  ~\ell_i \in [n+1], ~\forall~ i \in [m], \\ 
		& g_{i}^{y}(x) := \min_{\ell_i \in [n+1]}g_{i,\ell_i}^{y}(x),~\forall ~ i \in [m],
	\end{aligned}
\end{equation}
where we denote that $c_{i(n+1)}^y = 0$ for $i \in [m]$ and $y \in \Y$.
Then, by \cref{thm:submod-convexhull}, we obtain
\begin{equation}\label{def:U_y_SC}
	\small
	\begin{aligned}
		\conv(\CU_{y}) 
		&=
		\{(\eta, x) \in \R \times [0,1]^n: \eqref{ineq:submod-explicit-G-D}\} 
		= \left\{(\eta, x) \in \R \times [0,1]^n: \eta \le \min_{(\ell_1, \ldots, \ell_m) 
			\in [n+1]^m}{\sum_{i =1}^m} w_i 		g_{i,\ell_i}^{y}(x) \right\}\\
		&\overset{(a)}{=} \left\{(\eta, x) \in \R \times [0,1]^n: \eta \le \sum_{i =1}^m w_i \min_{\ell_i \in [n+1]}	
		g_{i,\ell_i}^{y}(x) \right\}
		= \left\{(\eta, x) \in \R \times [0,1]^n: \eta \le \sum_{i =1}^m w_i g_{i}^{y}(x) \right\},
	\end{aligned}
\end{equation}
{where {(a) follows from $w_i \geq 0$ and $g_{i,\ell_i}^{y}(x)\geq 0$ for all $i \in [m]$}.}
On the other hand, letting 
\begin{equation}\label{def:h_i}
	f_{i}^{y}(x) := \max_{z^{y}_{i\cdot}} \left\{\sum_{j=1}^nc^{y}_{ij}z^{y}_{ij} : 0 \le z_{ij}^{y} \leq x_j, 
	\ \forall \ j \in [n], \ \sum_{j=1}^nz^y_{ij} \leq 1 \right\},~\forall~i \in [m],
\end{equation}
we have 
\begin{equation}\label{projW}
	\proj_{\eta, x} (\W_{y}^{\text{L}}) = \left\{ (\eta,x) \in \mathbb{R}\times [0,1]^n \, : \, \eta\leq \sum_{i=1}^m w_i f_{i}^{y}(x)  \right \}.
\end{equation}
To derive a closed formula for $f_{i}^{y}(x)$, let us consider the dual of the \LP problem \eqref{def:h_i}:
\begin{equation}\label{def:h_i dual}
	f_{i}^{y}(x) = \min_{u_i, \, w_{i\cdot}} \left\{u_i + \sum_{j=1}^nx_{j} w_{ij} : u_i + w_{ij} \geq c^{y}_{ij},
	\ \forall \ j \in [n], \ u_i  \geq 0,\  w_{ij} \geq 0, \ \forall \ j \in [n]  \right\}.
\end{equation}
By $x_j \geq 0$, there must exist an optimal solution $(u_i, w_{i\cdot})$ of problem \eqref{def:h_i dual} such that   $w_{ij}= (c^{y}_{ij}- u_i)^+$ holds for all $j \in [n]$, and thus  
\begin{equation}\label{tempdualopt}
	f_{i}^{y}(x) = \min_{u_i \in \R_{+}} \left(u_i +\sum_{j=1}^n  x_j(c^{y}_{ij}- u_i)^+ \right).
\end{equation}
The objective function of problem \eqref{tempdualopt} is a piecewise linear function that is bounded below on $\mathbb{R}_+$ and changes slope whenever $u_i = c^y_{ij}$ for some $j \in [n+1]$. 
Hence, its minimum is obtained at some $u_i = c^y_{i\ell_i}$  with $\ell_i \in [n+1]$, which indicates  
\begin{equation}\label{hdef}
	f_{i}^{y}(x) = \min_{\ell_i \in [n+1]} \left(c^{y}_{i\ell_i}+ \sum_{j=1}^n (c^{y}_{ij}- c^{y}_{i\ell_i} )^+x_{j} \right) 
	= g^y_i(x).
\end{equation}
Combining \eqref{def:U_y_SC}, \eqref{projW}, and \eqref{hdef}, we obtain the equivalence of $\conv(\U_{y})$ and  $\proj_{\eta, x} (\W_{y}^{\text{L}}) $.
\begin{theorem}\label{thm:proj_Wy^C}
		Given $y \in \Y$,  $\proj_{\eta, x} (\W_{y}^{\text{L}})   = \conv(\U_{y}) $.
\end{theorem}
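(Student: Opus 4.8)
The plan is to chain three identities already established above so that both sides of the claimed equality reduce to the same explicit description. By \eqref{def:U_y_SC}, which relies on \cref{thm:submod-convexhull}, I would write $\conv(\CU_{y}) = \{(\eta,x) \in \R \times [0,1]^n : \eta \le \sum_{i=1}^m w_i\, g_i^y(x)\}$, where $g_i^y(x) = \min_{\ell_i \in [n+1]} g_{i,\ell_i}^y(x)$. On the other side, by \eqref{projW} I would write $\proj_{\eta,x}(\W_{y}^{\text{L}}) = \{(\eta,x) \in \R \times [0,1]^n : \eta \le \sum_{i=1}^m w_i\, f_i^y(x)\}$, where $f_i^y(x)$ is the optimal value of the per-customer LP \eqref{def:h_i}. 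Since both sets have the common form $\{(\eta,x) \in \R \times [0,1]^n : \eta \le \sum_i w_i\, \phi_i(x)\}$, it suffices to show that the two coefficient functions agree, i.e.\ that $f_i^y(x) = g_i^y(x)$ for every $i \in [m]$ and every $x \in [0,1]^n$.

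That coincidence is exactly \eqref{hdef}, and the way I would obtain it is by LP duality. Dualizing \eqref{def:h_i} gives \eqref{def:h_i dual}; because $x \ge 0$, an optimal dual solution may be taken with $w_{ij} = (c^{y}_{ij} - u_i)^+$, which collapses the dual into the univariate minimization \eqref{tempdualopt} over $u_i \ge 0$. The objective there is piecewise linear in $u_i$, bounded below, with slope changes only at the values $c^{y}_{ij}$ and at $0 = c^{y}_{i(n+1)}$, so its minimum over $\R_{+}$ is attained at some $u_i = c^{y}_{i\ell_i}$ with $\ell_i \in [n+1]$; evaluating there returns precisely $g_i^y(x)$. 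Substituting $f_i^y = g_i^y$ into the two descriptions then yields $\proj_{\eta,x}(\W_{y}^{\text{L}}) = \conv(\CU_{y})$.

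The step I expect to be the crux is the identity $f_i^y(x) = g_i^y(x)$: the extended formulation measures each customer's captured share as an optimization over the assignment variables $z^{y}_{i\cdot}$ subject to \eqref{z-linear-3}--\eqref{z-linear-4}, whereas the convex-hull description measures the same share as a pointwise minimum of the improved submodular inequalities \eqref{ineq:submod-explicit-G-D}, and the duality-plus-breakpoint argument is what reconciles these two a priori different quantities. By contrast, the projection identity \eqref{projW} should be routine, since constraints \eqref{z-linear-3} and \eqref{z-linear-4} decouple across customers $i$, so that maximizing the right-hand side of \eqref{z-linear-1} for a fixed $x$ separates into the sum of the independent single-customer LPs \eqref{def:h_i}.
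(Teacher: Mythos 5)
Your proposal is correct and follows essentially the same route as the paper: the paper also reduces both sides to the common form $\{(\eta,x) \in \R \times [0,1]^n : \eta \le \sum_{i=1}^m w_i \phi_i(x)\}$ via \eqref{def:U_y_SC} and \eqref{projW}, and then establishes $f_i^y(x) = g_i^y(x)$ by dualizing \eqref{def:h_i}, substituting $w_{ij} = (c^y_{ij} - u_i)^+$, and locating the minimum of the resulting piecewise linear function at a breakpoint $u_i = c^y_{i\ell_i}$ with $\ell_i \in [n+1]$. Your identification of the duality-plus-breakpoint step as the crux, and of the customer-wise decoupling behind \eqref{projW} as routine, matches the structure of the paper's argument exactly.
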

\cref{thm:proj_Wy^C} demonstrates that $\conv(\U_y)$ enjoys an extended formulation $\W_y^\text{L}$ in the $(\eta, x, z)$ space with only  $mn + m + 1$ inequalities. 
{This is different from the linear characterization in the $(\eta , x)$ space where an exponential number of improved submodular inequalities  \eqref{ineq:submod-explicit-G-D} is needed.}

\subsection{An extended formulation for  \SCFLP}
\label{subsection:ZLP}

Based on the results in \cref{thm:proj_Wy^C}, we immediately obtain an extended \MILP formulation
of problem \eqref{single-level-P1}: 
	\begin{equation}\label{prob:ZLP-pre}
			\max_{\eta,\, x \in \X,\, \{z^y\}_{y \in \Y} } \left\{\eta:  
 \eqref{z-linear-3}, \ \eqref{z-linear-4}, \ \eqref{z-linear-1},
	~z^y \in \{0,1\}^{mn},~\forall \  y  \in \Y\right\}.
	\end{equation}
\noindent Although \eqref{prob:ZLP-pre} is an \MILP problem, its exponential number
of allocation variables $z^y_{ij}$ 
makes it unrealistic to be directly solved by a \BnC algorithmic framework. 
{The following theorem shows that the exponential family of variables $\{z^y\}_{y \in \Y}$ (as $|\Y|$ is exponential) in \eqref{prob:ZLP-pre}, however, can be aggregated into $mn$ variables.
The proof is provided in Appendix \ref{sec:appendix-pf-exist-z^y}.}
\begin{theorem}\label{thm:exist-z^y}
	For problem \eqref{prob:ZLP-pre} (respectively, its \LP relaxation), {there exists an optimal solution $(\eta, x, \{{z}^y\}_{y \in \Y})$ and $\hat{z} \in \{0,1\}^{mn}$ (respectively, $\hat{z} \in [0,1]^{mn}$)
	 such that $z^y = \hat{z}$ for all $y \in \Y$.}
\end{theorem}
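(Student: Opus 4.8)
The plan is to exploit the special structure of the coefficients $c^y_{ij}$, namely that their dependence on $y$ enters only through a factor that is constant across the facility index $j$. First I would fix any optimal solution $(\eta, x, \{z^y\}_{y\in\Y})$ of \eqref{prob:ZLP-pre} (respectively, of its \LP relaxation) and keep $\eta$ and $x$ untouched, modifying only the allocation blocks $z^y$. The observation to record is that, since $\eta$ is the sole objective variable and the blocks for distinct $y$ interact only through the shared $x$ and the single inequalities \eqref{z-linear-1}, each block $z^y$ may be replaced by \emph{any} assignment that is feasible for \eqref{z-linear-3}--\eqref{z-linear-4} and maximizes $\sum_{i=1}^m w_i \sum_{j=1}^n c^y_{ij} z^y_{ij}$: enlarging the right-hand side of \eqref{z-linear-1} only relaxes that constraint, so $\eta$ need not change. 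Because $w_i \ge 0$, this inner maximization decouples over customers, and its optimal value is exactly $\sum_{i=1}^m w_i f_i^y(x)$ in the notation of \eqref{def:h_i}.

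The crucial step, which I expect to be the heart of the argument, is that the maximizing assignment can be chosen \emph{independently of} $y$. Writing $c^y_{ij} = v_{ij}/(v_{ij} + D_i^y)$ with $D_i^y := \max_{k\in[n]} v_{ik} y_k \ge 0$, the quantity $D_i^y$ does not depend on $j$, and $t \mapsto t/(t + D_i^y)$ is strictly increasing on $\R_+$. Hence, for every customer $i$, the ordering of the facilities by $c^y_{ij}$ coincides with their ordering by $v_{ij}$, and this ordering is the same for all $y \in \Y$. Since, for each $i$, the inner problem is a $0$-$1$ (respectively continuous) knapsack whose greedy optimal solution is determined solely by this ordering together with the fixed $x$, a single assignment $\hat z$ — produced by the greedy rule applied with the $v_{ij}$-ordering — simultaneously attains $f_i^y(x)$ for every $y$. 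In the integer case $x \in \{0,1\}^n$ this simply assigns each customer $i$ to the open facility $j \in \CS^x$ of largest $v_{ij}$ (well defined as $\sum_j x_j = p \ge 1$), giving $\hat z \in \{0,1\}^{mn}$; in the \LP case the greedy fills the facilities in $v_{ij}$-order up to the capacity one, giving $\hat z \in [0,1]^{mn}$.

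Finally, I would verify that substituting $z^y = \hat z$ for all $y \in \Y$ preserves feasibility and optimality. By construction $\hat z$ satisfies \eqref{z-linear-3} and \eqref{z-linear-4}, and since $\sum_{i=1}^m w_i \sum_{j=1}^n c^y_{ij} \hat z_{ij} = \sum_{i=1}^m w_i f_i^y(x) \ge \sum_{i=1}^m w_i \sum_{j=1}^n c^y_{ij} z^y_{ij} \ge \eta$ for every $y$, the inequalities \eqref{z-linear-1} continue to hold with the original value of $\eta$. Thus $(\eta, x, \{\hat z\}_{y\in\Y})$ is feasible with the same objective value and hence optimal, which is the claim. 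The only point requiring care is tie-breaking: when several facilities share a common value of $v_{ij}$ for some customer $i$, one must fix a single tie-breaking order and use it across all $y$; this is legitimate precisely because equal $v_{ij}$ yield equal $c^y_{ij}$ for every $y$, so the chosen greedy solution remains optimal uniformly in $y$.
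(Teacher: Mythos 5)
Your proposal is correct and follows essentially the same route as the paper's proof: the key observation that the ordering of $\{c^y_{ij}\}_{j\in[n]}$ by value coincides with the $y$-independent ordering by $\{v_{ij}\}_{j\in[n]}$ is exactly the paper's \cref{lemma:sigma}, your greedy assignment is the paper's construction \eqref{def:optimal_z} in \cref{rmk:optimal-solutions}, and your final feasibility chain $\sum_i w_i \sum_j c^y_{ij}\hat z_{ij} = \sum_i w_i f^y_i(x) \ge \sum_i w_i \sum_j c^y_{ij} z^y_{ij} \ge \eta$ mirrors the paper's inequality \eqref{tempinequ}. No gaps; your tie-breaking remark is a sensible addition that the paper handles implicitly by fixing a single permutation $\sigma_i$.
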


Using \cref{thm:exist-z^y}, we can introduce new binary variables $z \in \{0,1\}^{mn}$ in place of all binary variables $\{z^y\}_{y \in \Y}$ from \eqref{prob:ZLP-pre}, and obtain the following equivalent problem formulation
	\begin{align}\label{prob:ZLP}\tag{EF}
		\max_{\substack{\eta,\, x \in \X,\\ z \in \{0,1\}^{mn}}}  
		&\left\{\eta : 
		\eqref{z-linear-3-no-y}, ~ 
		\eqref{z-linear-4-no-y}, ~
		\eqref{z-linear-1-no-y}\right\},
	\end{align} 
where 
\begin{align}
	& z_{ij} \leq x_j,\ \forall \  i \in [m], \ \forall \ j \in [n], \label{z-linear-3-no-y}\\
	&\sum_{j=1}^n z_{ij} \leq 1, \  \forall \  i \in [m],\label{z-linear-4-no-y}\\
	&\eta \leq \sum_{i=1}^m w_i \sum_{j=1}^n c^y_{ij} z_{ij},\ \forall \ y \in \Y. \label{z-linear-1-no-y}
\end{align}
Compared with formulation \eqref{prob:ZLP-pre} which involves a potentially exponential number of variables $\{z^y\}_{y \in \Y}$, the new formulation \eqref{prob:ZLP} only involves $mn$ $z$ variables, which makes \eqref{prob:ZLP} realistic to be solved by a \BnC algorithmic framework.

{Two remarks on formulation  \eqref{prob:ZLP} are in order. }
First, 
by \cref{thm:exist-z^y}, the \LP relaxation of \eqref{prob:ZLP-pre} is equivalent to that of \eqref{prob:ZLP}, which, along with \cref{thm:proj_Wy^C},
implies the equivalence of the \LP relaxations of \eqref{prob:ZLP} and \eqref{prob:submod-disaggr-card1}.
\begin{corollary}\label{coro:ZLP=original_MINLP}
	The \LP relaxation of problem \eqref{prob:ZLP} is equivalent to that of problem \eqref{prob:submod-disaggr-card1}
	in terms of sharing the same optimal value.
\end{corollary}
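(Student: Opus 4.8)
The plan is to show that both \LP relaxations reduce to maximizing $\eta$ over the \emph{same} set in the $(\eta, x)$ space, namely $\bigcap_{y \in \Y} \conv(\CU_y)$ intersected with the \LP relaxation of $\X$. Once this common feasible region is exhibited, equality of optimal values is immediate since the objective is $\max \eta$ in both cases.

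First I would invoke \cref{thm:exist-z^y} for the \LP relaxations to argue that \eqref{prob:ZLP} and \eqref{prob:ZLP-pre} have the same \LP relaxation optimal value: any optimal solution of the latter may be taken with all blocks $z^y$ equal to a common $\hat{z}$, which is precisely a feasible solution of \eqref{prob:ZLP} with the same objective; conversely every feasible solution of \eqref{prob:ZLP} yields one of \eqref{prob:ZLP-pre} by copying $z$ into each $z^y$. This lets me work with the more convenient disaggregated formulation \eqref{prob:ZLP-pre}, where the allocation variables carry an explicit index $y$.

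Next I would project the \LP relaxation of \eqref{prob:ZLP-pre} onto $(\eta, x)$. The key structural fact is that the blocks $\{z^y\}_{y \in \Y}$ are decoupled: for fixed $(\eta, x)$, the constraints \eqref{z-linear-3}, \eqref{z-linear-4}, \eqref{z-linear-1} indexed by distinct $y$ share no $z$-variables. Hence a point $(\eta, x)$ with $x$ in the \LP relaxation of $\X$ is feasible if and only if, for each $y$, there exists $z^y$ with $(\eta, x, z^y) \in \W_y^{\text{L}}$, i.e. $(\eta, x) \in \proj_{\eta,x}(\W_y^{\text{L}})$ for all $y \in \Y$. Applying \cref{thm:proj_Wy^C}, which identifies $\proj_{\eta,x}(\W_y^{\text{L}})$ with $\conv(\CU_y)$, the projected feasible region becomes exactly $\bigcap_{y \in \Y}\conv(\CU_y)$ intersected with the \LP relaxation of $\X$.

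Finally I would appeal to \cref{thm:submod-convexhull}, which states that for a fixed $y$ the inequalities \eqref{ineq:submod-explicit-G-D} together with $\bm{0} \le x \le \bm{1}$ describe precisely $\conv(\CU_y)$; therefore the feasible region of the \LP relaxation of \eqref{prob:submod-disaggr-card1} is the same intersection $\bigcap_{y \in \Y}\conv(\CU_y)$ over the \LP relaxation of $\X$. Since both \LP relaxations then maximize $\eta$ over identical $(\eta, x)$-regions, their optimal values agree, and combining this with the first step yields the corollary. I expect the only delicate point to be the commutation of projection with the intersection over $y$ in the third paragraph; this is clean here precisely because the allocation variables are block-separable across $y$ in \eqref{prob:ZLP-pre}, which is exactly why I would route the argument through \eqref{prob:ZLP-pre} rather than attempting to project $z$ directly out of \eqref{prob:ZLP}, where a single shared $z$ would obstruct the elementary ``projection of intersection equals intersection of projections'' step.
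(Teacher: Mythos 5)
Your proof is correct and takes essentially the same route as the paper: the paper likewise chains \cref{thm:exist-z^y} (to identify the \LP relaxations of \eqref{prob:ZLP} and \eqref{prob:ZLP-pre}), \cref{thm:proj_Wy^C}, and \cref{thm:submod-convexhull} to identify both \LP-relaxation feasible regions with $\bigcap_{y \in \Y}\conv(\CU_y)$ intersected with the relaxation of $\X$. Your explicit justification that projection commutes with the intersection over $y$ because the blocks $\{z^y\}_{y\in\Y}$ are separable in \eqref{prob:ZLP-pre} is left implicit in the paper, but it is exactly the intended argument.
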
 
\noindent Second, 
{unlike \eqref{prob:submod-disaggr-card1} which involves \((n+1)^m\) improved submodular inequalities \eqref{ineq:submod-explicit-G-D} for each \(y \in \Y\),
formulation \eqref{prob:ZLP} requires only a single inequality \eqref{z-linear-1-no-y}  for each \(y \in \Y\) (although this comes at a cost of including the $mn$ $z$ variables).}
This feature makes formulation \eqref{prob:ZLP} able to serve as a good alternative to be embedded in a \BnC algorithmic framework for finding a global optimal solution to the \SCFLP.

\section{The overall algorithmic framework}\label{section: implementation}
In this section, we present the overall algorithmic framework for solving the three \MILP reformulations \eqref{prob:submod-aggr},
 \eqref{prob:submod-disaggr-card1}, and \eqref{prob:ZLP} of the \SCFLP, developed in 
 \cref{sect:milp_sub,section:new-submodular,section:ZLP}. 
Note that due to the exponential families of inequalities \eqref{ineq:submod-G}, \eqref{ineq:submod-explicit-G-D}, and  \eqref{z-linear-1-no-y}, it is impractical to solve \eqref{prob:submod-aggr},
\eqref{prob:submod-disaggr-card1}, and \eqref{prob:ZLP} by directly invoking off-the-shelf \MILP solvers. 
Therefore, we apply a \BnC framework (detailed in Appendix \ref{sec:appendix-BnCframework}) to solve \eqref{prob:submod-aggr},
\eqref{prob:submod-disaggr-card1}, and \eqref{prob:ZLP}  where {the exponential families of inequalities} \eqref{ineq:submod-G}, \eqref{ineq:submod-explicit-G-D}, and  \eqref{z-linear-1-no-y} are separated on the fly. 
In the following, we will detail the separation algorithms to find violated inequalities \eqref{ineq:submod-G}, \eqref{ineq:submod-explicit-G-D}, and  \eqref{z-linear-1-no-y} for the solutions encountered at the nodes of the search tree.

\subsection{Separation  of submodular inequalities \eqref{ineq:submod-G}}\label{subsec:sepa_newsub}
Given a point $(\eta^*, x^*) \in \R \times [0,1]^n $, 
the separation problem of 
the submodular inequalities \eqref{ineq:submod-G}
asks to find inequalities violated by $(\eta^*, x^*)$ or prove none exists. 
This is equivalent to solving the following optimization problem: 
\begin{equation}\label{sepa:oldsub}
		\min_{y \in \Y} \min_{\CS\subseteq [n]} \left(G_{y}(\CS) + \sum_{j \in [n] \backslash \CS} \rho_j^{G_{y}}(\CS) x^*_j\right). 
\end{equation}
To solve \eqref{sepa:oldsub}, we first consider the case $x^*\in \{0,1\}^n$.
Note that for a fixed $y$, the inner problem of \eqref{sepa:oldsub} is
\begin{equation}\label{prob-mini-smod-old}
	\min_{\CS\subseteq [n]} \left(G_{y}(\CS) + \sum_{j \in [n] \backslash \CS} \rho_j^{G_{y}}(\CS) x^*_j\right),
\end{equation}   
and by {monotonicity and submodularity}  of $G_{y}$, one of its optimal solutions is $\CS^{x^*} = \{j \in [n]: x^*_j = 1\}$.
Thus, problem \eqref{sepa:oldsub} reduces to 
 \begin{equation}\label{pmp-oldsub-layer2}
	\begin{aligned}
		\min_{y \in \Y}  \min_{\CS\subseteq [n]} \left(G_{y}(\CS) + \sum_{j \in [n] \backslash \CS} \rho_j^{G_{y}}(\CS) x^*_j\right) 
		= \min_{y \in \Y}G_{y}(\CS^{x^*}) 
		{\overset{(a)}{=}}\min_{y \in \Y}g(x^*,y),
	\end{aligned}
\end{equation}
where {(a)} follows from
 the definitions of $g(x^*, y)$
 and $G_{y}(\CS^{x^*})$ in \eqref{def:Gxy} and \eqref{def:CH,CG}, respectively.
Letting $c_i := \max_{j \in [n]} v_{ij} x^*_j$ for $i \in [m]$ and $a_{ik}:= \frac{c_i}{c_i+ v_{ik}}$ for $i\in [m]$ and $k \in [n]$, 
then for any $y \in \Y \subseteq \{0,1\}^n$, it follows
{\small
 \begin{equation*}
 	\begin{aligned}
 	h_{i}(x^*,y) & = \frac{\max_{j \in [n]} v_{ij} x^ *_j}{\max_{j \in [n]} v_{ij} x^*_j+
 		\max_{k \in [n]} v_{ik} y_k} 
 	& = \frac{c_i}{c_i+ \max_{k \in [n]} v_{ik} y_k} 
 	= \min_{k \in [n]}\frac{c_i}{c_i+ v_{ik}} y_k = \min_{k \in [n]} a_{ik} y_k,
 	\end{aligned}
 \end{equation*}
}%
 and by \eqref{def:Gxy},
 \begin{equation}\label{sepa:pmp-NLP}
 	\begin{aligned}
 		\min_{y \in \Y} g(x^*,y) =\min_{y \in \Y} \sum_{i=1}^m w_i h_i(x^*,y)
 		=  \min_{y \in \Y} \sum_{i=1}^m w_i\min_{k \in [n]} a_{ik} y_k.
 	\end{aligned}	
 \end{equation}
 Problem \eqref{sepa:pmp-NLP} is equivalent to the well-known $r$-median problem which attempts to establish
  a subset of $r$ facilities among the $n$ potential facility locations such that the sum of the costs $\{w_i a_{ik}\}$ between customers and their closest facilities is minimized.
The $r$-median problem  is a well-known NP-hard combinatorial optimization problem \citep{Hakimi1964} but has  
various efficient heuristics and exact solution methods; see \citet{Avella2007,Garcia2011}, and \cite{Duran-Mateluna2023} among many of them.
In our context, we solve this problem by the state-of-the-art Benders decomposition algorithm of \citet{Duran-Mateluna2023}, which guarantees to find an optimal solution of problem \eqref{sepa:pmp-NLP}.

To further speed up the separation procedure, we also apply an efficient heuristic algorithm, as to avoid solving too many separation problems of the form \eqref{sepa:pmp-NLP}. 
Specifically, we use $\F$ to track the set of points $y$, 
each of which is an optimal solution of some separation problem \eqref{sepa:pmp-NLP} (encountered in previous iterations).
Then, to quickly find a submodular inequality \eqref{ineq:submod-G} violated by the current solution $(\eta^*,x^*)$,
 we check, for each $y \in \F$, whether 
$\eta^* > G_{y}(\CS^{x^*})$ holds.
If  the above heuristic procedure successfully detects some violated inequalities, then we directly add these inequalities 
to the problem and skip the procedure of solving \eqref{sepa:pmp-NLP}; otherwise, {we solve the separation problem \eqref{sepa:pmp-NLP} exactly.}

For the case $(\eta^*, x^*) \in \R \times [0,1]^n $ with fractional $x^* \notin \{0,1\}^n$, 
it is {generally difficult} to solve  problem  \eqref{sepa:oldsub} exactly, hence a simple rounding heuristic procedure is considered instead. 
{Specifically, we round $\{x_j^*\}$ to their nearest integers to obtain an integral point $\bar{x} \in \{0,1\}^n$.
Then, for each $y \in \F$, we construct a submodular inequality based on $(\eta^*, \bar{x}) $  
and add it into the problem if it is violated by $(\eta^*, x^*)$. }
It is worthwhile remarking that (i) only separating the submodular inequalities at points $(\eta^*, x^*) $ with integral $x^* \in \{0,1\}^n$  can also ensure the convergence and correctness of the algorithm;
  (ii) separating the submodular inequalities at points $(\eta^*, x^*)$ with fractional {$x^* \notin \{0,1\}^n$}
   can strengthen the \LP relaxation more quickly, thereby usually achieving a better overall performance. 

\subsection{Separation  of improved submodular inequalities \eqref{ineq:submod-explicit-G-D}}
\label{subsection:sepa_SF}
Unlike the separation problem of submodular inequalities \eqref{ineq:submod-G}, we can  exactly
solve the separation problem of improved submodular
 inequalities \eqref{ineq:submod-explicit-G-D} through solving an $r$-median problem for arbitrary $(\eta^*, x^*) \in \R \times [0,1]^n $, as detailed below.
 
 For $(\eta^*, x^*) \in \R \times [0,1]^n $, the separation problem of inequalities \eqref{ineq:submod-explicit-G-D}  is equivalent to solving the following optimization problem: 
\begin{equation}\label{sepa:newsub}
	\min_{y \in \Y} \min_{(\ell_1, \dots, \ell_m) \in [n+1]^m} \sum_{i=1}^m w_i \left( c^y_{i\ell_i} 
	+  \sum_{j=1}^{n} \left( c^y_{ij} - 
	c^y_{i\ell_i}\right)^+ x^*_j\right).
\end{equation}
To solve problem \eqref{sepa:newsub}, we provide the following proposition which characterizes the optimal solution and value of the inner problem. 
The proof is provided in Appendix \ref{sec:appendix-newsub-sepa}.
\begin{proposition}\label{thm:newsub-sepa} 
	Given $y \in  \Y$ and $i \in [m]$, 
	let $\sigma_i(1),\ldots$, $\sigma_i(n)$ be a permutation of $[n]$ satisfying 
	$v_{i\sigma_i(1)} \geq \cdots \geq v_{i\sigma_i(n)}$.
	For $x^* \in [0,1]^n$, 
	let
	\begin{equation}\label{def:k_i-star}
		k_i: = \left\{
		\begin{aligned}
			&1, \quad &&\text{if }~x^*_{\sigma_i(1)} = 1;\\ 
			&\max \left\{\ell \in [n] : \sum_{j=1}^\ell x^*_{\sigma_i(j)} < 1 \right\}, \quad && \text{otherwise}.
		\end{aligned}
		\right.
	\end{equation}
	Then, 
	\begin{equation}\label{pmp-newsub-layer0}
		\footnotesize
		\begin{aligned}
			\min_{(\ell_1, \dots, \ell_m)  \in [n+1]^m} \sum_{i=1}^m w_i \left( c^y_{i\ell_i} +  \sum_{j=1}^{n} \left( c^y_{ij} - 
			c^y_{i\ell_i}\right)^+ x^*_j\right)
			&=  \sum_{i=1}^m w_i \left( c^y_{i\sigma_i(k_i+1)} +  \sum_{j=1}^{n} \left( c^y_{ij} - 
			c^y_{i\sigma_i(k_i+1)}\right)^+ x^*_j\right)\\
			& =\sum_{i=1}^m w_i \min_{k \in [n]} b_{ik} y_k,
		\end{aligned}
	\end{equation}
	where
	$b_{ik} := \frac{\left(1-\sum_{j=1}^{k_i}x^*_{\sigma_i(j)}\right)v_{i\sigma_i(k_i+1)}}{v_{i\sigma_i(k_i+1)} + v_{ik}} +
	\sum_{j=1}^{k_i} \frac{ x^*_{\sigma_i(j)}v_{i\sigma_i(j)}}{v_{i\sigma_i(j)} + v_{ik}} \geq 0$ 
	for $i \in [m]$ and $k \in [n]$.
\end{proposition}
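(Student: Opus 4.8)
The plan is to evaluate the inner minimization over $(\ell_1,\dots,\ell_m)$ first, and then recognize the resulting expression as an $r$-median problem over $y \in \Y$. Since the objective in \eqref{sepa:newsub} separates across customers $i \in [m]$ (the terms are summed with nonnegative weights $w_i$, and each summand depends only on $\ell_i$), the inner minimization decomposes into $m$ independent problems of the form $\min_{\ell_i \in [n+1]} g_{i,\ell_i}^y(x^*)$, where $g_{i,\ell_i}^y(x^*) = c^y_{i\ell_i} + \sum_{j=1}^n (c^y_{ij} - c^y_{i\ell_i})^+ x^*_j$. By \eqref{hdef} in \cref{section:ZLP}, this equals $f_i^y(x^*)$, the value of the LP relaxation of the single-customer assignment problem. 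So the first key step is to identify the minimizing index $\ell_i$ explicitly.

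First I would analyze the piecewise-linear function $\ell_i \mapsto g_{i,\ell_i}^y(x^*)$. Writing $c^y_{i\ell_i}$ as the ``threshold'' value, the term $\sum_j (c^y_{ij} - c^y_{i\ell_i})^+ x^*_j$ counts, weighted by $x^*_j$, only those $j$ whose coefficient exceeds the threshold. Because $c^y_{ij} = v_{ij}/(v_{ij} + \max_k v_{ik}y_k)$ is monotone increasing in $v_{ij}$ for fixed $y$ (the denominator's second term is constant in $j$), the ordering of the $c^y_{ij}$ coincides with the ordering of the $v_{ij}$. This is precisely why the permutation $\sigma_i$ is defined via $v_{i\sigma_i(1)} \geq \cdots \geq v_{i\sigma_i(n)}$: it sorts the coefficients independently of $y$. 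The optimal threshold index $k_i$ is then determined by where the cumulative mass $\sum_{j=1}^\ell x^*_{\sigma_i(j)}$ crosses $1$; the formula \eqref{def:k_i-star} picks the largest prefix length whose mass is still below $1$ (with the boundary case $x^*_{\sigma_i(1)}=1$ handled separately). The claim is that the optimal $\ell_i$ is $\sigma_i(k_i+1)$, i.e., the next-largest facility after the chosen prefix. I would verify this by comparing consecutive values $g_{i,\sigma_i(k)}^y(x^*)$ and showing the difference changes sign exactly at $k=k_i$, using the LP-duality characterization \eqref{tempdualopt}–\eqref{hdef} which already establishes that the minimum is attained at some $c^y_{i\ell_i}$.

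Next I would substitute $\ell_i = \sigma_i(k_i+1)$ back and simplify the first equality of \eqref{pmp-newsub-layer0}; this is mostly bookkeeping. The more delicate second equality asserts that the resulting per-customer value equals $\min_{k \in [n]} b_{ik} y_k$ for the stated $b_{ik}$. Here I would expand $c^y_{i\sigma_i(k_i+1)} = v_{i\sigma_i(k_i+1)}/(v_{i\sigma_i(k_i+1)} + \max_k v_{ik}y_k)$ and each $c^y_{i\sigma_i(j)}$ for $j \leq k_i$, using that $\max_k v_{ik}y_k = v_{ik^*}$ for the single index $k^*$ with $y_{k^*}=1$ achieving the max (since $y \in \Y$ is binary and the $c^y$ values only depend on $y$ through this max). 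The $(c^y_{ij}-c^y_{i\ell_i})^+$ terms for $j \leq k_i$ simplify because on the sorted prefix all coefficients exceed the threshold, turning the $(\cdot)^+$ into an ordinary difference; the telescoping then collapses into the two-term expression defining $b_{ik}$. The main obstacle I anticipate is this final algebraic reduction: correctly tracking which terms survive the $(\cdot)^+$ truncation, handling the fractional residual mass $1 - \sum_{j=1}^{k_i} x^*_{\sigma_i(j)}$ that attaches to the threshold facility $\sigma_i(k_i+1)$, and confirming that replacing $\max_k v_{ik}y_k$ by the active $v_{ik}$ legitimately produces the minimum over $k$ of $b_{ik}y_k$. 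Establishing $b_{ik} \geq 0$ is then immediate since every term is a ratio of nonnegative quantities.
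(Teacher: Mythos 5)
Your proposal is correct and takes essentially the same route as the paper's proof: decompose the inner minimization customer-by-customer, identify the optimal threshold index $\sigma_i(k_i+1)$ via the $y$-independent sorted order of the $c^y_{ij}$ (the paper's \cref{lemma:sigma} and \cref{rmk:optimal-solutions}), then collapse the $(\cdot)^+$ terms over the sorted prefix and swap $\max_{k} v_{ik}y_k$ into a minimum over open facilities to obtain $\min_{k\in[n]} b_{ik}y_k$. The only cosmetic difference is that you verify optimality of $\sigma_i(k_i+1)$ by a consecutive-difference sign argument, whereas the paper exhibits an explicit primal--dual optimal pair for the LP \eqref{def:h_i dual}; both rest on the same duality characterization \eqref{tempdualopt}--\eqref{hdef}.
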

\noindent By \cref{thm:newsub-sepa}, solving  the separation problem \eqref{sepa:newsub} is equivalent to solving the following $r$-median problem:
\begin{equation}\label{pmp-newsub-layer1}
	\begin{aligned}
		\min_{y \in \Y} \min_{(\ell_1, \dots, \ell_m)  \in [n+1]^m} \sum_{i=1}^m w_i \left( c^y_{i\ell_i} +  \sum_{j=1}^{n} \left( c^y_{ij} - 
		c^y_{i\ell_i}\right)^+ x^*_j\right)
		=\min_{y \in \Y}\sum_{i=1}^m w_i \min_{k \in [n]} b_{ik} y_k.
	\end{aligned}
\end{equation}

Similar to the separation of submodular inequalities, to avoid solving too many $r$-median problems \eqref{pmp-newsub-layer1}, we apply
an efficient heuristic algorithm to solve problem \eqref{sepa:newsub}.
The heuristic algorithm attempts to find  violated inequalities \eqref{ineq:submod-explicit-G-D} by solving
the inner subproblems of \eqref{sepa:newsub} corresponding to $y \in \F$. 
{Here, $\F$ consists of points, each of which is an optimal solution of some separation problem \eqref{pmp-newsub-layer1} (encountered in previous iterations).}
In particular, by \cref{thm:newsub-sepa}, we can check, for each $y \in \F$,
whether $\eta^* >\sum_{i=1}^m w_i \left( c^y_{i\sigma_i(k_i+1)} +  \sum_{j=1}^{n} \left( c^y_{ij} - 
c^y_{i\sigma_i(k_i+1)}\right)^+ x^*_j\right)$ holds, and if it holds,  the  improved submodular inequality $\eta \leq \sum_{i=1}^m w_i \left( c^y_{i\sigma_i(k_i+1)} +  \sum_{j=1}^{n} \left( c^y_{ij} - 
c^y_{i\sigma_i(k_i+1)}\right)^+ x_j\right)$ is violated by $(\eta^*, x^*) $.
Since the computation of $\{ c_{ij}^y \}_{i \in [m],\,j \in [n]}$ (defined in \eqref{def:c^y_{ij}}) can be conducted in $\CO(mn)$, it follows that this heuristic algorithm can be accomplished in the complexity of $\CO(|\F|mn)$ (provided that the permutations $\{\sigma_{i}(j) \}_{j \in [n]}$ for $i \in [m]$ have been precomputed). 
If the above heuristic procedure successfully detects some violated inequalities, then we directly add these inequalities 
 to the problem and skip the procedure of solving \eqref{pmp-newsub-layer1}; otherwise, we solve the separation problem \eqref{pmp-newsub-layer1} 
 exactly by the Benders decomposition algorithm of \cite{Duran-Mateluna2023}.
 
\subsection{Separation  of inequalities \eqref{z-linear-1-no-y}}
Similar to  submodular inequalities \eqref{ineq:submod-G} and improved submodular inequalities
 \eqref{ineq:submod-explicit-G-D}, solving the separation problem  of inequalities \eqref{z-linear-1-no-y} 
 is equivalent to solving an  $r$-median problem  as well, as detailed below.

Given a point $(\eta^*, x^*, z^*) \in \R \times [0,1]^n \times [0,1]^{mn}$, the separation problem  of inequalities \eqref{z-linear-1-no-y}  is equivalent to solving the following optimization problem: 
 \begin{equation}\label{prob-mini-Z}
 	\min_{y \in \Y} \sum_{i=1}^m w_i \sum_{j=1}^n c^y_{ij} z^*_{ij}.
 \end{equation}
By the definition of 
$\{c_{ij}^y\}_{(i,j)\in [m] \times [n]}$ in \eqref{def:c^y_{ij}}, we have 
{\small
\begin{equation*}
	\begin{aligned}
		\sum_{j=1}^n c^y_{ij} z^*_{ij} = \sum_{j=1}^n\frac{v_{ij}}{v_{ij}+
			\max_{k \in [n]} v_{ik} {y}_k} z^*_{ij}	=\sum_{j=1}^n \left(\min_{k \in [n]}\frac{v_{ij} }{v_{ij}+
			v_{ik} }{y}_k\right)z^*_{ij}
		\overset{(a)}{=} \min_{k \in [n]}\sum_{j=1}^n \frac{z^*_{ij}v_{ij} }{v_{ij}+
			v_{ik} } {y}_k,
	\end{aligned}
\end{equation*}}%
where  {$(a)$} follows from 
$\argmin_{k \in [n]} \frac{\beta}{\alpha+v_{ik}}y_k = \argmax_{k \in [n]}v_{ik} y_k$  for any $(\alpha,\beta) \in  \R^2_{+}$.
Letting $d_{ik} := \sum_{j=1}^n \frac{z^*_{ij}v_{ij} }{v_{ij}+v_{ik} }$ for $i \in [m]$ and $k \in [n]$,
problem \eqref{prob-mini-Z} reduces to the following $r$-median problem:
\begin{equation}\label{pmp-zlp}
	\begin{aligned}
		\min_{y \in \Y}\sum_{i=1}^m w_i \min_{k \in [n]} d_{ik} y_k.
	\end{aligned}
\end{equation}
Unlike the separation of inequalities \eqref{ineq:submod-G} and \eqref{ineq:submod-explicit-G-D},
there is no such inner subproblem of problem \eqref{prob-mini-Z}, and we do not design a heuristic  procedure to separate inequalities \eqref{z-linear-1-no-y}.

\section{Computational results}\label{sec:computational_results}
	\vspace*{0.2cm}
	In this section, we first present the computational results to demonstrate the effectiveness of the
	proposed \BnC algorithms for solving the \SCFLP. 
	To do this, we first perform computational experiments to compare the performance of the proposed \BnC algorithms with the state-of-the-art algorithms in \cite{Qi2024} and \cite{Biesinger2016}.  
 	Then, we compare the performance of the proposed \BnC algorithms  based the formulations \eqref{prob:submod-disaggr-card1} 
 and \eqref{prob:ZLP} for solving large-scale \SCFLP{s}. 
	
	The proposed \BnC algorithms were implemented in Julia 1.11.5 using Cplex 20.1.0. 
	Specifically, Cplex was configured to run the code in a single-threaded mode, with a time limit of 7200 seconds and a relative MIP gap tolerance of 0\%. 
	Unless otherwise stated, all other parameters of Cplex were set to their default values. 
	 All computation was conducted on a cluster of Intel(R) Xeon(R) Gold 6230R CPU @ 2.10 GHz computers.

\subsection{Comparison of of the proposed \BnC algorithms with  state-of-the-art algorithms}

We first compare the performance of the \BnC algorithms
based on formulations \eqref{prob:submod-aggr}, \eqref{prob:submod-disaggr-card1},
and \eqref{prob:ZLP} (denoted as \tblOldSubmodular, \tblNewSubmodular, and
\tblZLP), and the  state-of-the-art evolutionary algorithm in \citet{Biesinger2016} (denoted as \tblBiesinger).
Note that \tblOldSubmodular can be seen as the state-of-the-art \BnC algorithm of \cite{Qi2024} based on submodular inequalities that is adapted to the \SCFLP under partially binary rule.
The four algorithms were tested on a benchmark testset of 36 instances of \cite{Biesinger2016} available from \url{https://www.ac.tuwien.ac.at/research/problem-instances}.
In these instances, the customer locations and the potential facility
locations were set to be identical, randomly chosen on a $[0, 100]\times[0, 100]$ Euclidean plane.
The attractiveness parameters were set to $v_{ij} = \frac{1}{d_{ij}+1}$,  where $d_{ij}$ is the distance  between customer $i\in [m]$ and facility $j\in [n]$. 
The number of locations $m$ and $n$ were set to $100$; the 
customer demands were chosen uniformly at random from $\{1,\ldots,10\}$; $p$ and $r$ were chosen from $p \in \{2,\ldots, 10\}$ and $r \in \{2,\ldots, 5\}$.

\afterpage{
\begin{landscape}
	\begin{table}[tbp]
		\centering
		\caption{Comparison of the \BnC algorithms based on formulations \eqref{prob:submod-aggr},  \eqref{prob:submod-disaggr-card1}, and \eqref{prob:ZLP}, and the evolutionary algorithm  of \citet{Biesinger2016}.}
		\label{tbl:Biesinger_3models}
		\resizebox{.88\linewidth}{!}{
			\begin{tabular}{lrrrrrrrrrrrrrrrrrrrrrrr}
				\toprule
				\multicolumn{1}{c}{\multirow{2}{*}{$m$}}
				& \multicolumn{1}{c}{\multirow{2}{*}{$n$}}
				& \multicolumn{1}{c}{\multirow{2}{*}{$p$}}
				& \multicolumn{1}{c}{\multirow{2}{*}{$r$}}
				& \multicolumn{6}{c}{\tblOldSubmodular}
				& \multicolumn{6}{c}{\tblNewSubmodular}
				& \multicolumn{6}{c}{\tblZLP}
				& \multicolumn{2}{c}{\tblBiesinger} 
				\\
				\cmidrule(r){5-10}
				\cmidrule(r){11-16}
				\cmidrule(r){17-22}
				\cmidrule(r){23-24} 
			 & & &	&    \tblO	& \tblTorG	&    \tblN	&   \tblrG	& \tblCutTime	& \tblCutNum	&    \tblO	& \tblTorG	&    \tblN	&   \tblrG	& \tblCutTime	& \tblCutNum	&    \tblO	& \tblTorG	&    \tblN	&   \tblrG	& \tblCutTime	& \tblCutNum	&     \tblO	& \tblT\\
				\midrule
100&100&2&2         	& 279.000&4.8	& 3510	& 8.1	&      2.2	&      285	& 279.000&3.2	& 0	& $<0.1$	&      1.3	&       24	&${ 279.000 }^*$&3.1	& 0	& $<0.1$	&      1.1	&        3& 278.931  & 529.0  \\
&&&3                	& 247.987&7.2	& 4817	& 10.2	&      4.0	&      692	& 247.987&3.4	& 0	& $<0.1$	&      1.5	&       37	&${ 247.987 }^*$&3.4	& 0	& $<0.1$	&      1.4	&        3& 247.946  & 606.0  \\
&&&4                	& 225.641&6.2	& 3650	& 10.2	&      3.3	&      651	& 225.641&3.4	& 0	& $<0.1$	&      1.5	&       42	&${ 225.641 }^*$&3.1	& 0	& $<0.1$	&      1.1	&        3& 225.640  & 560.0  \\
&&&5                	& 212.604&9.8	& 5609	& 11.0	&      5.9	&     1207	& 212.604&3.5	& 3	& $<0.1$	&      1.6	&       68	& 212.604 &4.1	& 3	& $<0.1$	&      1.4	&        4& 212.604  & 626.0  \\
&&3&2               	& 310.013&95.7	& 61207	& 8.1	&     50.9	&     3483	& 310.013&3.5	& 1	& $<0.1$	&      1.6	&       48	& 310.013 &3.1	& 0	& $<0.1$	&      1.2	&        3& 310.013  & 515.0  \\
&&&3                	& 279.000&32.2	& 44014	& 9.6	&     12.9	&      836	& 279.000&3.2	& 0	& $<0.1$	&      1.4	&       39	& 279.000 &3.0	& 0	& $<0.1$	&      1.1	&        3& 279.000  & 432.0  \\
&&&4                	& 255.704&51.6	& 43999	& 10.8	&     21.5	&     1681	& 255.704&3.2	& 0	& $<0.1$	&      1.4	&       45	&${ 255.704 }^*$&2.9	& 0	& $<0.1$	&      1.0	&        3& 255.072  & 410.0  \\
&&&5                	& 242.667&85.5	& 48959	& 11.0	&     31.6	&     3028	& 242.667&3.7	& 0	& $<0.1$	&      1.7	&       81	&${ 242.667 }^*$&3.0	& 0	& $<0.1$	&      1.0	&        3& 242.035  & 427.0  \\
&&4&2               	& 332.359&328.4	& 313955	& 7.5	&     96.1	&     2329	& 332.359&3.3	& 0	& $<0.1$	&      1.4	&       41	& 332.359 &2.9	& 0	& $<0.1$	&      1.1	&        3& 332.359  & 376.0  \\
&&&3                	& 302.296&323.5	& 283890	& 8.5	&     90.2	&     2444	& 302.296&3.3	& 0	& $<0.1$	&      1.5	&       69	&${ 302.296 }^*$&2.9	& 0	& $<0.1$	&      1.1	&        3& 302.217  & 354.0  \\
&&&4                	& 279.000&234.9	& 232033	& 9.1	&     65.8	&     2154	& 279.000&3.3	& 0	& $<0.1$	&      1.4	&       46	& 279.000 &2.8	& 0	& $<0.1$	&      1.1	&        3& 279.000  & 330.0  \\
&&&5                	& 265.917&442.8	& 259855	& 9.8	&    133.2	&     3845	& 265.917&3.3	& 0	& $<0.1$	&      1.5	&       52	& 265.917 &2.9	& 0	& $<0.1$	&      1.0	&        3& 265.917  & 365.0  \\
&&5&2               	& 344.869& (2.3)	& 955217	& 7.7	&    844.8	&    16784	& 345.259&4.2	& 7	& $<0.1$	&      2.2	&      262	&${ 345.259 }^*$&3.7	& 5	& $<0.1$	&      1.6	&        5& 345.116  & 444.0  \\
&&&3                	& 315.333& (1.6)	& 1522280	& 8.3	&    626.6	&    10332	& 315.333&3.4	& 0	& $<0.1$	&      1.5	&       88	&${ 315.333 }^*$&2.9	& 0	& $<0.1$	&      1.1	&        3& 313.582  & 362.0  \\
&&&4                	& 292.083& (1.5)	& 1482805	& 9.5	&    695.9	&    10131	& 292.083&3.3	& 3	& $<0.1$	&      1.4	&       82	&${ 292.083 }^*$&2.9	& 0	& $<0.1$	&      1.0	&        3& 291.000  & 330.0  \\
&&&5                	& 279.000& (2.3)	& 953946	& 9.8	&    716.4	&    15299	& 279.000&3.6	& 0	& $<0.1$	&      1.7	&      186	&${ 279.000 }^*$&2.9	& 0	& $<0.1$	&      1.1	&        3& 278.193  & 401.0  \\
&&6&2               	& 357.506& (3.4)	& 683772	& 7.6	&    983.8	&    22631	& 357.811&4.3	& 5	& $<0.1$	&      2.4	&      465	&${ 357.811 }^*$&3.5	& 5	& $<0.1$	&      1.5	&        4& 357.640  & 437.0  \\
&&&3                	& 326.703& (4.3)	& 592371	& 9.3	&    801.1	&    25807	& 326.974&4.1	& 9	& 0.1	&      2.2	&      286	&${ 326.974 }^*$&3.8	& 9	& 0.1	&      1.7	&        4& 325.250  & 386.0  \\
&&&4                	& 304.716& (4.0)	& 845635	& 9.6	&    644.3	&    18870	& 305.040&3.4	& 0	& $<0.1$	&      1.5	&       93	&${ 305.040 }^*$&2.8	& 0	& $<0.1$	&      1.1	&        3& 303.139  & 343.0  \\
&&&5                	& 291.059& (4.6)	& 743361	& 9.9	&    658.5	&    20328	& 291.958&3.5	& 0	& $<0.1$	&      1.6	&      155	&${ 291.958 }^*$&3.0	& 0	& $<0.1$	&      1.2	&        4& 290.754  & 400.0  \\
&&7&2               	& 368.437& (3.8)	& 392854	& 7.3	&   1299.5	&    35708	& 368.437&3.9	& 3	& $<0.1$	&      1.9	&      296	&${ 368.437 }^*$&3.7	& 3	& $<0.1$	&      1.6	&        7& 366.883  & 412.0  \\
&&&3                	& 339.007& (4.1)	& 729540	& 7.9	&    725.6	&    20130	& 339.052&3.7	& 0	& $<0.1$	&      1.8	&      219	&${ 339.052 }^*$&2.9	& 0	& $<0.1$	&      1.1	&        3& 335.827  & 348.0  \\
&&&4                	& 317.048& (4.1)	& 1042815	& 9.0	&    757.5	&    13501	& 317.157&3.4	& 0	& $<0.1$	&      1.5	&      127	&${ 317.157 }^*$&2.9	& 0	& $<0.1$	&      1.1	&        3& 315.167  & 298.0  \\
&&&5                	& 302.935& (4.9)	& 638860	& 9.3	&    867.1	&    21721	& 304.120&3.6	& 0	& $<0.1$	&      1.7	&      168	&${ 304.120 }^*$&2.8	& 0	& $<0.1$	&      1.1	&        3& 301.843  & 340.0  \\
&&8&2               	& 377.384& (3.9)	& 617941	& 6.4	&    939.6	&    22550	& 378.524&3.9	& 3	& $<0.1$	&      2.0	&      324	&${ 378.524 }^*$&3.8	& 3	& $<0.1$	&      1.7	&        8& 376.136  & 382.0  \\
&&&3                	& 348.788& (4.4)	& 698699	& 7.8	&    790.0	&    20000	& 349.765&3.9	& 0	& $<0.1$	&      1.8	&      205	&${ 349.765 }^*$&2.9	& 0	& $<0.1$	&      1.1	&        3& 347.421  & 344.0  \\
&&&4                	& 326.246& (4.8)	& 908199	& 8.6	&    533.1	&    16114	& 327.870&3.4	& 0	& $<0.1$	&      1.5	&      118	&${ 327.870 }^*$&2.9	& 0	& $<0.1$	&      1.1	&        3& 327.670  & 302.0  \\
&&&5                	& 314.509& (4.8)	& 683075	& 9.0	&    684.8	&    20090	& 314.819&3.6	& 0	& $<0.1$	&      1.6	&      210	&${ 314.819 }^*$&2.9	& 0	& $<0.1$	&      1.1	&        3& 314.168  & 340.0  \\
&&9&2               	& 384.826& (4.2)	& 482357	& 6.7	&    819.4	&    25571	& 386.550&5.7	& 19	& 0.1	&      3.6	&      750	&${ 386.550 }^*$&4.3	& 13	& $<0.1$	&      2.0	&        7& 385.354  & 316.0  \\
&&&3                	& 357.200& (4.7)	& 472440	& 7.8	&    788.6	&    27306	& 358.101&4.4	& 3	& $<0.1$	&      2.4	&      573	&${ 358.101 }^*$&2.9	& 0	& $<0.1$	&      1.1	&        3& 356.983  & 320.0  \\
&&&4                	& 335.637& (5.0)	& 855517	& 8.9	&    558.1	&    16119	& 336.412&3.7	& 0	& $<0.1$	&      1.6	&      175	&${ 336.412 }^*$&2.8	& 0	& $<0.1$	&      1.0	&        3& 335.919  & 318.0  \\
&&&5                	& 321.446& (5.9)	& 469833	& 9.1	&    718.2	&    29266	& 323.363&3.8	& 3	& $<0.1$	&      1.8	&      297	&${ 323.363 }^*$&3.3	& 3	& $<0.1$	&      1.3	&        4& 323.154  & 364.0  \\
&&10&2              	& 391.392& (4.6)	& 299722	& 7.1	&    936.1	&    36032	& 393.636&6.2	& 11	& 0.1	&      4.0	&     1353	&${ 393.636 }^*$&4.1	& 11	& 0.1	&      1.9	&        8& 388.068  & 303.0  \\
&&&3                	& 365.399& (4.7)	& 419297	& 7.8	&    962.6	&    24966	& 366.278&4.3	& 0	& $<0.1$	&      2.2	&      464	&${ 366.278 }^*$&2.9	& 0	& $<0.1$	&      1.1	&        3& 363.047  & 348.0  \\
&&&4                	& 344.142& (5.0)	& 772129	& 8.9	&    540.5	&    15908	& 344.217&3.7	& 0	& $<0.1$	&      1.7	&      252	&${ 344.217 }^*$&2.9	& 0	& $<0.1$	&      1.1	&        3& 343.982  & 304.0  \\
&&&5                	& 330.640& (5.3)	& 805437	& 8.9	&    524.6	&    16541	& 331.419&4.0	& 0	& $<0.1$	&      2.0	&      414	&${ 331.419 }^*$&2.8	& 0	& $<0.1$	&      1.1	&        3& 330.684  & 311.0  \\
\tblAve        & & &	&         	& 4845.1&{538155.6}	& 8.8	&    526.0	& {14565.0} &         	& 3.8&1.9	& $<0.1$	&      1.8	& 226.5	&         	& 3.1&1.5	& $<0.1$	&      1.2	& 3.7&&\\
\tblSolved     & & &	&       12	&         	&         	&         	&         	&         	&       36	&         	&         	&         	&         	&         	&       36	&         	&         	&         	&         	&       &&  \\

				\bottomrule
		\end{tabular}
	}
	\end{table}
	
\end{landscape}}

\cref{tbl:Biesinger_3models} summarizes the computational
results of settings \tblOldSubmodular, \tblNewSubmodular,
\tblZLP, and \tblBiesinger. 
For each setting of \tblOldSubmodular, \tblNewSubmodular, and 
\tblZLP, we report 
the objective value 
of the optimal solution or the best incumbent of the instance (\tblO),
the total CPU time in seconds (\tblT),
the number of explored nodes (\tblN), 
{the CPU time in seconds spent in separating the cuts (\tblCutTime),} and 
the number of added cuts (\tblCutNum).
For instances that cannot be solved to
optimality within the given time limit, we report under column \tblTorG the percentage optimality gap (\tblG) 
computed as $\frac{\tblUB - \tblLB}{\tblUB} \times 100\%$, where \tblUB and
 \tblLB denote the upper bound and the lower bound obtained at the end of the time limit. 
{To compare the strength of the three formulations \eqref{prob:submod-aggr}, \eqref{prob:submod-disaggr-card1},
	and \eqref{prob:ZLP}},
 we report the \LP relaxation gap at the root node (\tblrG), defined by $\frac{\tblO^* - \tblO_{\text{root}}}{\tblO^*}  \times 100\%$,
  where $\tblO^*$ and  $\tblO_{\text{root}}$ are {the optimal value of the \SCFLP} and the LP relaxation bound obtained at the root node, respectively.
 The smaller the \tblrG, the tighter the LP 
relaxation. 
At the end of the table, we report the summarized results.
Under setting \tblBiesinger, we report the objective value of the heuristic solution reported by \citet{Biesinger2016}.
For completeness, we also include the CPU time reported by \citet{Biesinger2016} (note that the experiments of \citet{Biesinger2016}  were carried out on an Intel Xeon Quadcore with 2.54 GHz using Cplex 12.5). 

We first observe from \cref{tbl:Biesinger_3models} that the performance of 
\tblOldSubmodular is fairly poor; overall, it can only solve $12$ instances among the $36$ instances to optimality within the time limit of $7200$ seconds.
{The reason behind this is that the submodular inequalities \eqref{ineq:submod-G} in formulation \eqref{prob:submod-aggr} are very weak} in terms of providing a poor \LP relaxation bound (see column \tblrG), which further forces \tblOldSubmodular to explore a huge search tree. 
{These results show that} a direct extension of the \BnC approach in \cite{Qi2024} based on submodular inequalities to our considered \SCFLP under partially binary rule indeed leads to a bad performance.
In sharp contrast, under setting \tblNewSubmodular, the \LP relaxation gaps are very small, typically smaller than $0.1\%$, showing that the proposed improved submodular inequalities \eqref{ineq:submod-explicit-G-D} are much more effective than the submodular inequalities \eqref{ineq:submod-G} in terms of providing a stronger LP relaxation bound.
This is attributed to
the favorable theoretical property of the improved submodular inequalities \eqref{ineq:submod-explicit-G-D} established in \cref{thm:submod-convexhull}; that is, compared with the submodular inequalities \eqref{ineq:submod-G} that can only characterize $\U_y$, 
the improved submodular inequalities \eqref{ineq:submod-explicit-G-D} are able to characterize $\conv(\U_y)$, an important substructure of the \SCFLP.
{Due to this advantage, \tblNewSubmodular outperforms \tblOldSubmodular by at least three orders of magnitude. In particular, equipped with the improved submodular inequalities,
\tblNewSubmodular solves all $36$ instances to optimality with an average CPU time being {$3.8$} seconds and an average number of nodes being $1.9$.  
}

Next, we compare the performance of settings \tblNewSubmodular and \tblZLP. 
We observe from \cref{tbl:Biesinger_3models} that the \LP relaxation gaps under settings \tblNewSubmodular and \tblZLP are {identical}, confirming that the \LP relaxations of problems \eqref{prob:ZLP} and \eqref{prob:submod-disaggr-card1} are indeed equivalent; see \cref{coro:ZLP=original_MINLP}.
Comparing the performance of the two proposed algorithms, we observe that \tblNewSubmodular requires a relatively large number of improved submodular cuts {\eqref{ineq:submod-explicit-G-D}} to ensure the convergence of the \BnC algorithm, whereas \tblZLP achieves convergence by adding only a small number of inequalities {\eqref{z-linear-1-no-y}}.
{This is reasonable as for a fixed $y \in \Y$, adding a single inequality \eqref{z-linear-1-no-y} in the underlying formulation \eqref{prob:ZLP} of \tblZLP has the same effect of adding the exponential number of   improved submodular inequalities \eqref{ineq:submod-explicit-G-D} in the underlying formulation \eqref{prob:submod-disaggr-card1} of \tblNewSubmodular;} see \cref{subsection:ZLP}.
Note that this comes at a cost of increasing the number of variables in the underlying formulation \eqref{prob:ZLP}. 
{Nevertheless,  on these small instances, \tblZLP converges slightly more  quickly than \tblNewSubmodular (as it requires significantly fewer cuts), {making it able} to compensate for the increase of the number of variables and achieve better overall performance.}
In the next subsection, we shall perform experiments on a testset of large-scale instances to further compare the performance of the proposed \tblNewSubmodular and \tblZLP.

Finally, we compare the performance of the proposed exact \BnC  algorithms with the state-of-the-art heuristic evolutionary algorithm of  \citet{Biesinger2016}. 
As shown in \cref{tbl:Biesinger_3models}, the evolutionary algorithm of \citet{Biesinger2016} can only identify an optimal solution for $7$ instances, although for the remaining ones, it can still identify a high-quality solution for the \SCFLP.
In contrast, the proposed \BnC algorithms (based on formulations \eqref{prob:submod-disaggr-card1} and \eqref{prob:ZLP}) can efficiently find the optimal solution for all instances. 
In \cref{tbl:Biesinger_3models}, we mark the $29$ instances, for which a better  solution is found {by the proposed exact algorithms},  by superscript ``$*$''.

\subsection{Comparison of 
	\eqref{prob:submod-disaggr-card1} with \eqref{prob:ZLP} on large scale instances
}

 \begin{figure}
 		\centering
 		\begin{minipage}{\textwidth}
 			\centering
 			\begin{subfigure}{0.240\textwidth}
 				\resizebox{\textwidth}{!}{\includegraphics{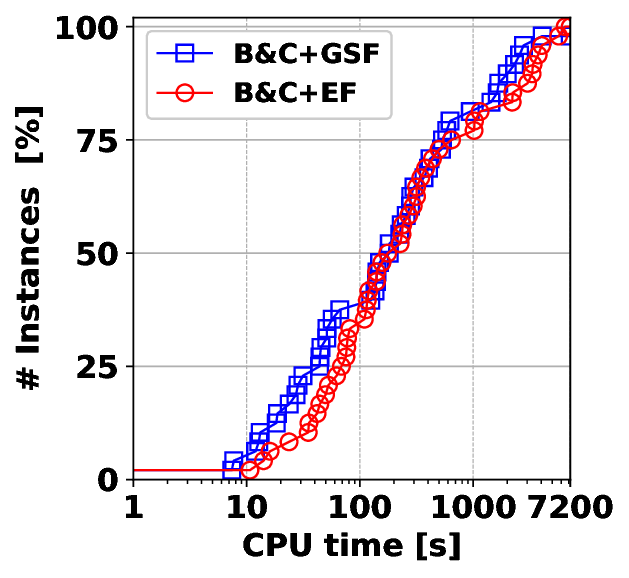}}
 				\caption{$m=500$}
 				\label{fig:time_m_500}
 			\end{subfigure}
 			 \hspace{0.01\textwidth} 
			\begin{subfigure}{0.240\textwidth}
 				\resizebox{\textwidth}{!}{\includegraphics{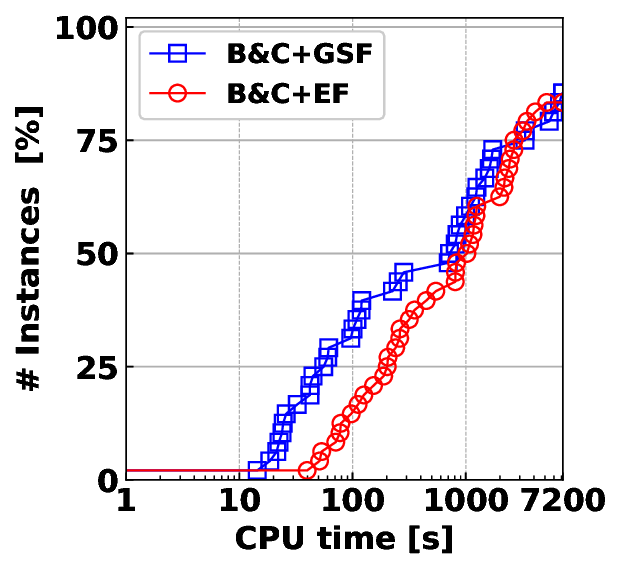}}
 				\caption{$m=800$}
 				\label{fig:time_m_800}
 			\end{subfigure}
 			 \hspace{0.01\textwidth}  
			\begin{subfigure}{0.240\textwidth}
 				\centering
 				\resizebox{\textwidth}{!}{\includegraphics{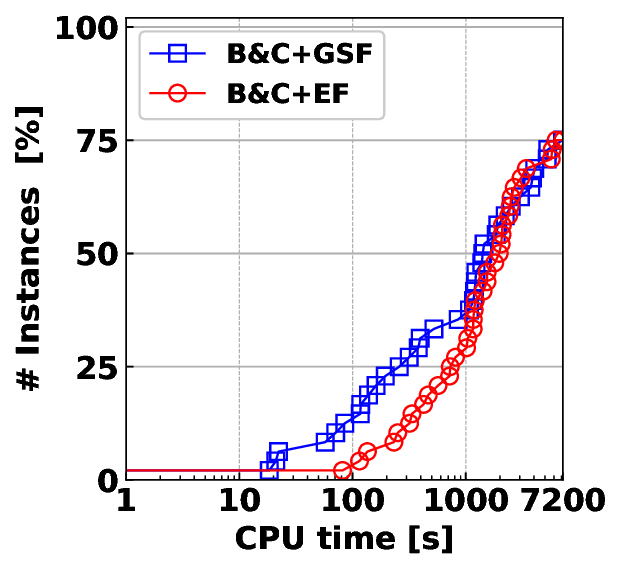}}
 				\caption{$m=1000$}
 				\label{fig:time_m_1000}
 			\end{subfigure}
 		\end{minipage}
 	\caption{Performance profiles of the CPU time on the large-scale \SCFLP instances, grouped by  $m$.}
 	\label{fig:m}
 \end{figure}

To further compare the performance of the proposed
 \BnC algorithms  based formulations \eqref{prob:submod-disaggr-card1} 
and \eqref{prob:ZLP} for solving large-scale \SCFLP{s}, we construct a testset of large-scale \SCFLP instances using a similar procedure as described in \cite{Qi2024}. 
Specifically, the locations of demand points and candidate
facility sites are randomly generated with integer coordinates on a $[0, 70]\times[0, 70]$  Euclidean plane.
The attractiveness parameters are set to $v_{ij} = e^{-0.1d_{ij}}$ based on the partially binary rule of \citet{Mendez-Vogel2023a},  
where $d_{ij}$ is the distance  between customer $i\in [m]$ and facility location $j\in [n]$. 
The number of customers $m$ and potential facility locations $n$ are all chosen from $\{500, 800, 1000\}$;
 the customer demands are chosen uniformly at random from $\{1,\ldots,10\}$;
  $p$ and $r$ are all taken from $\{2,5,10, 50\}$. 
In total, there are $144$ \SCFLP instances.

\cref{fig:m,fig:n,fig:p,fig:r}  plot the performance profiles of the CPU time under settings \tblNewSubmodular and \tblZLP 
grouped by $m$, $n$, $p$, and $r$, respectively. 
Detailed statistics of instance-wise computational results can be found in 
{Table \ref{tbl:New_ZLP_1000_merge}}
 of Appendix \ref{subsection:Appendix-table}. 

First, we compare the results of \tblNewSubmodular and \tblZLP for instances with different numbers of customers $m$ in \cref{fig:m}.
We observe that, as expected, increasing the number of customers $m$ makes the \SCFLP more difficult to be solved by both \tblNewSubmodular and \tblZLP.
In particular, when $m=500$, \tblNewSubmodular and \tblZLP can solve more than {$97\%$} and {$100\%$} instances to optimality within the time limit of $7200$ seconds,
{whereas when $m=1000$,} only approximately {$75\%$} instances can be solved to optimality with the same time limit. 
{Nevertheless, as shown in
	\cref{tbl:New_ZLP_1000_merge}
	in Appendix \ref{subsection:Appendix-table}, for the unsolved instances, the end gaps returned by \tblNewSubmodular and \tblZLP are usually smaller than {$1\%$}, demonstrating that  \tblNewSubmodular and \tblZLP can indeed be used to solve large-scale \SCFLP{s}.}
On the other hand, for the cases with $m=800, 1000$, we observe that due to the advantage of the much smaller number of variables in the underlying formulation \eqref{prob:submod-disaggr-card1}, \tblNewSubmodular generally performs better 
 than \tblZLP in terms of CPU time.
However, for instances with a small number of customers $m=500$, \tblZLP is able to converge more quickly and can solve slightly more instances to optimality than \tblNewSubmodular.

\begin{figure}
		\centering
		\begin{minipage}{\textwidth}
			\centering
			\begin{subfigure}{0.240\textwidth}
				\centering
				\resizebox{\textwidth}{!}{\includegraphics{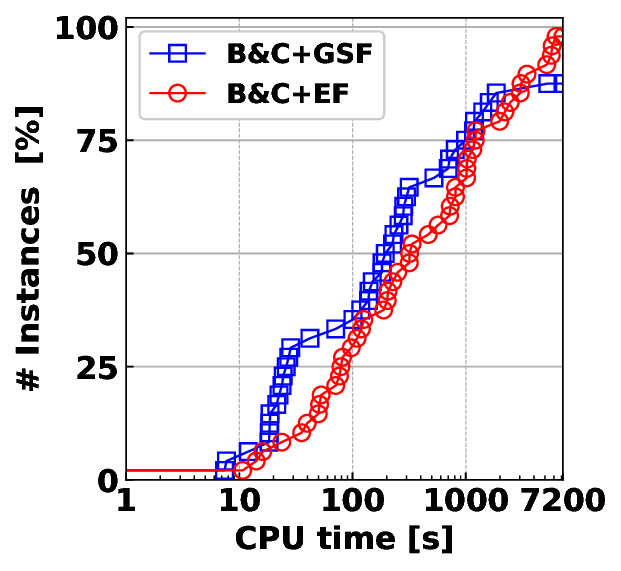}}
				\caption{ $n=500$}
				\label{fig:time_n_500}
			\end{subfigure}
			 \hspace{0.01\textwidth}
			\begin{subfigure}{0.240\textwidth}
				\centering
				\resizebox{\textwidth}{!}{\includegraphics{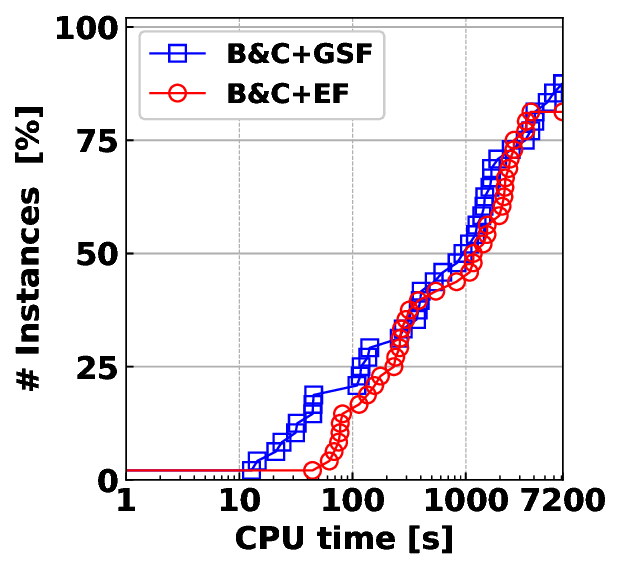}}
				\caption{ $n=800$}
				\label{fig:time_n_800}
			\end{subfigure}
			 \hspace{0.01\textwidth}
			\begin{subfigure}{0.240\textwidth}
				\centering
				\resizebox{\textwidth}{!}{\includegraphics{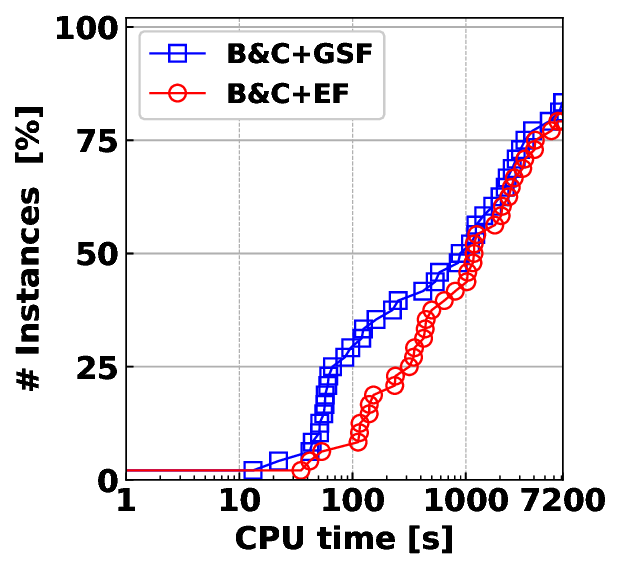}}
				\caption{ $n=1000$}
				\label{fig:time_n_1000}
			\end{subfigure}
			\hfill
				\end{minipage}

	\caption{Performance profiles of the CPU time on the large-scale \SCFLP instances, grouped by $n$.}
		  \label{fig:n}
\end{figure}

Next, we compare the performance of \tblNewSubmodular and \tblZLP for instances with different numbers of potential facility locations in \cref{fig:n}. 
We observe from  \cref{fig:n} that increasing the number of potential facility locations $n$ makes \SCFLP{s} more difficult to be solved by both  \tblNewSubmodular and \tblZLP.
On the other hand, in most cases,  \tblNewSubmodular performs better than \tblZLP, as it is based on the light-weight formulation \eqref{prob:submod-disaggr-card1}.
For instances with $n=500$, however,  \tblZLP can converge more quickly and thus can solve $10\%$ more instances to optimality.

\begin{figure}
		\centering
		\begin{minipage}{\textwidth}
			\centering
				\begin{subfigure}{0.240\textwidth}
				\centering
				\resizebox{\textwidth}{!}{\includegraphics{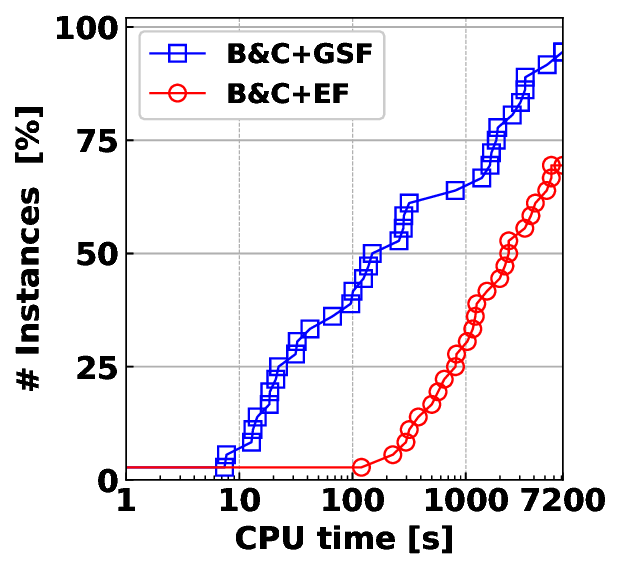}}
				\caption{ $p=2$}
				\label{fig:time_p_2}
			\end{subfigure}
			\hfill
			\begin{subfigure}{0.240\textwidth}
				\centering
				\resizebox{\textwidth}{!}{\includegraphics{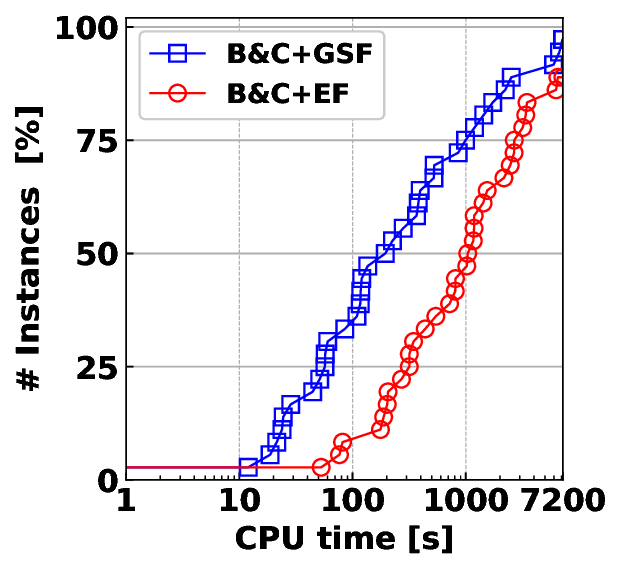}}
				\caption{ $p=5$}
				\label{fig:time_p_5}
			\end{subfigure}
			\hfill
			\begin{subfigure}{0.240\textwidth}
				\centering
				\resizebox{\textwidth}{!}{\includegraphics{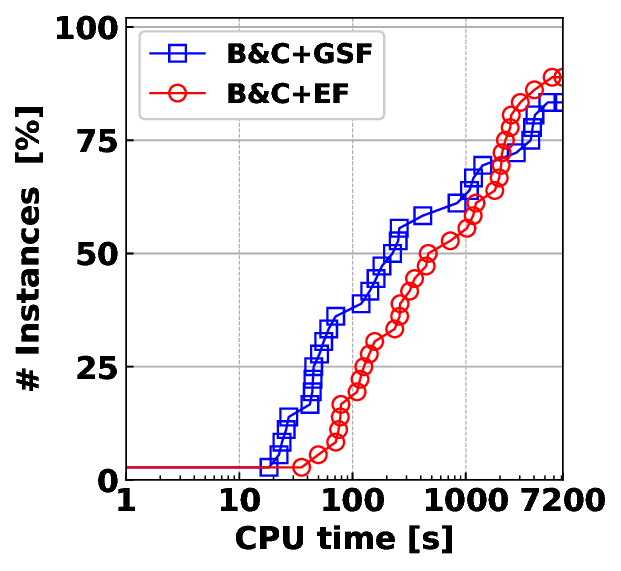}}
				\caption{ $p=10$}
				\label{fig:time_p_10}
			\end{subfigure}
			\hfill
			\begin{subfigure}{0.240\textwidth}
				\centering
				\resizebox{\textwidth}{!}{\includegraphics{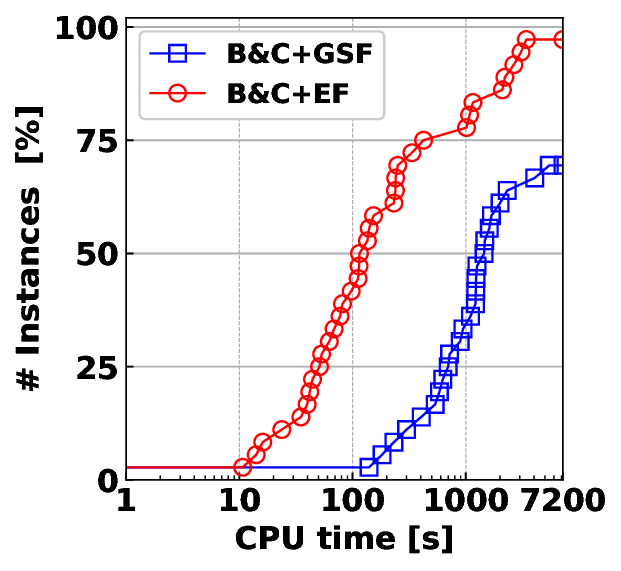}}
				\caption{ $p=50$}
				\label{fig:time_p_50}
			\end{subfigure}
		\end{minipage}
	\caption{Performance profiles of the CPU time on the large-scale \SCFLP instances, grouped by  $p$. }
		\label{fig:p}
\end{figure}

We now compare the performance of  \tblNewSubmodular and \tblZLP for instances with different numbers of the leader's open facilities  $p$. 
We observe from \cref{fig:p} that 
increasing the  number of leader's open facilities $p$ makes  \SCFLP{s} more difficult to be solved by \tblNewSubmodular.
In particular, when $p = 2$, more than $94\%$ instances can be solved to optimality by \tblNewSubmodular within the time limit of $7200$ seconds,
whereas  when $p = 50$, less than $70\%$ instances are solved to optimality by \tblNewSubmodular  within the same time limit.
In contrast, \tblZLP performs better as $p$ increases; \tblZLP can solve almost all instances with $p=50$ to optimality while it can only solve approximately $70\%$ instances with $p=2$ to optimality within the time limit of $7200$ seconds.
{Due to these results, we recommend using \tblNewSubmodular to solve \SCFLP{s} with small $p$ and using \tblZLP to solve \SCFLP{s} with large $p$.}

\begin{figure}
		\centering
		\begin{minipage}{\textwidth}
			\centering
			\begin{subfigure}{0.240\textwidth}
				\centering
				\resizebox{\textwidth}{!}{\includegraphics{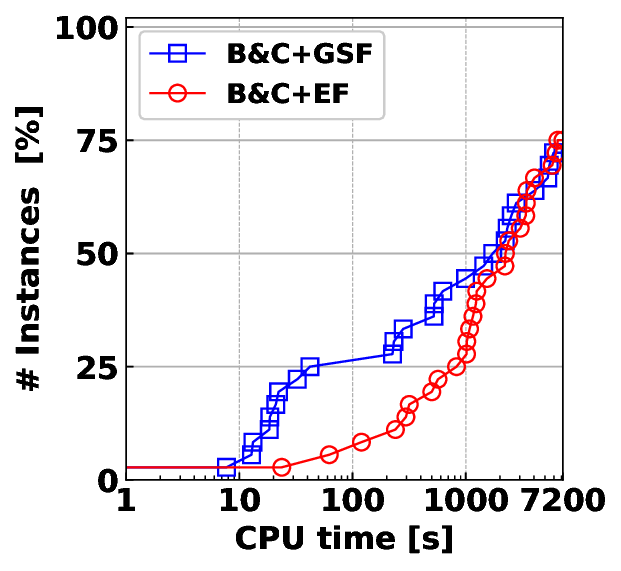}}
				\caption{ $r=2$}
				\label{fig:time_r_2}
			\end{subfigure}
			\hfill
			\begin{subfigure}{0.240\textwidth}
				\centering
				\resizebox{\textwidth}{!}{\includegraphics{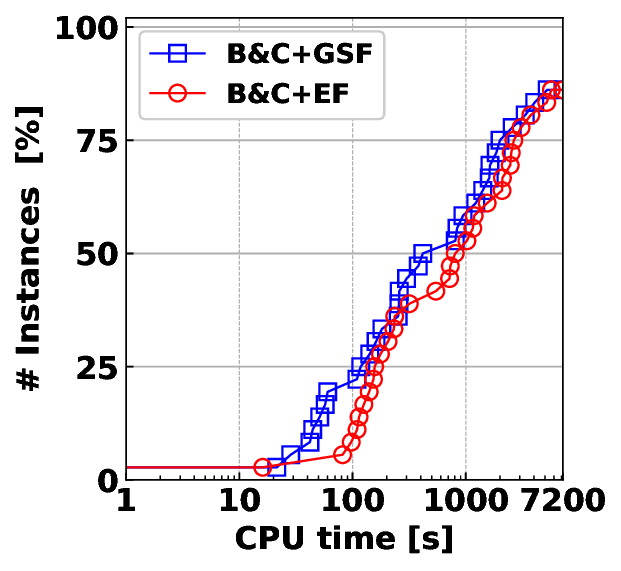}}
				\caption{ $r=5$}
				\label{fig:time_r_5}
			\end{subfigure}
			\hfill
			\begin{subfigure}{0.240\textwidth}
				\centering
				\resizebox{\textwidth}{!}{\includegraphics{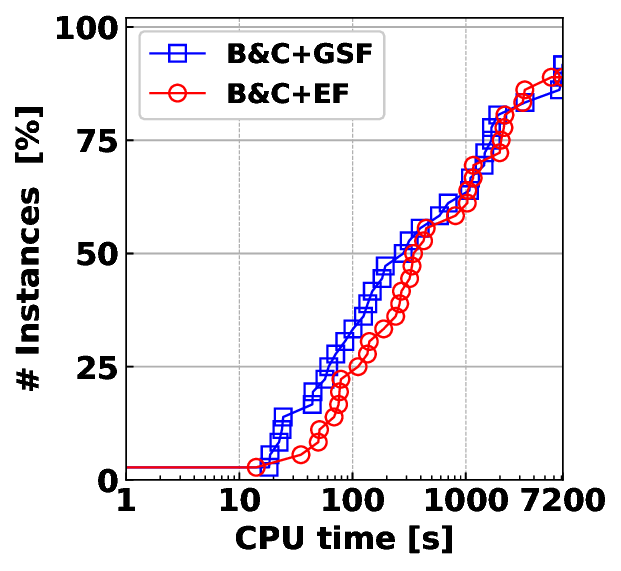}}
				\caption{ $r=10$}
				\label{fig:time_r_10}
			\end{subfigure}
			\hfill
			\begin{subfigure}{0.240\textwidth}
				\centering
				\resizebox{\textwidth}{!}{\includegraphics{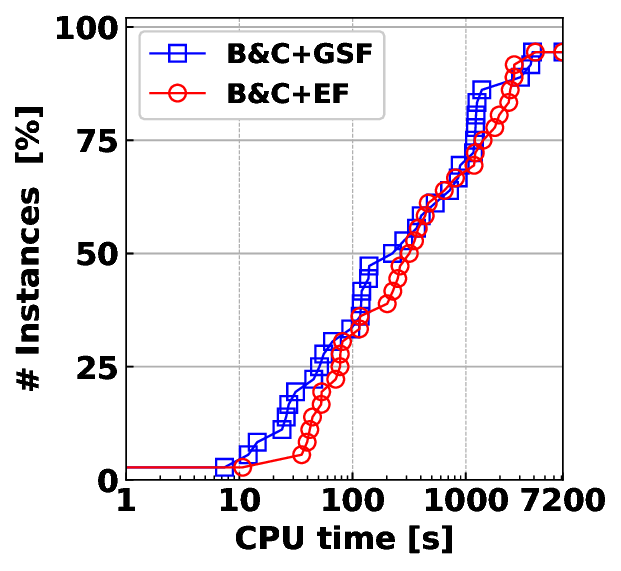}}
				\caption{ $r=50$}
				\label{fig:time_r_50}
			\end{subfigure}
		\end{minipage}
	\caption{Performance profiles of the CPU time on the large-scale \SCFLP instances, grouped by  $r$.}
	 \label{fig:r}
\end{figure}

Finally, we compare the performance of  \tblNewSubmodular and \tblZLP for instances with different numbers of the follower's open facilities  $r$. 
As shown in \cref{fig:r}, increasing the number of the follower's open facilities  $r$ makes \SCFLP{s} easier to be solved by both  \tblNewSubmodular and \tblZLP. 
On the other hand, we observe from \cref{fig:time_r_5,fig:time_r_10,fig:time_r_50} that the blue square lines corresponding to \tblNewSubmodular are higher than \tblZLP,
demonstrating that for \SCFLP instances with $r =5, 10, 50$, \tblNewSubmodular performs better than \tblZLP.
For  \SCFLP instances with $r=2$, although the blue square lines corresponding to \tblNewSubmodular are generally higher than \tblZLP,  \tblZLP, however, can solve slightly more instances to optimality than \tblNewSubmodular.

\section{Conclusions}\label{sec:Conclusions}

In this paper, we have considered the \SCFLP under partially binary rule and developed an efficient \BnC approach based on a single-level \MINLP formulation. 
In particular, we first established the submodularity of the multiple ratio function characterizing the leader's market share for each fixed follower's location choice, and extended the \BnC approach of \cite{Qi2024} based on submodular inequalities to our considered \SCFLP.
To address the challenge arising in the poor \LP relaxation of the underlying formulation, we then developed two new \MILP formulations with  much stronger \LP relaxations based on the polyhedral investigation of the mixed 0-1 set $\U_y$ induced by the hypograph of each multiple ratio function.
The first one is based on a newly proposed class of improved submodular inequalities, which include the classic submodular inequalities as special cases.
The second one is an extended formulation of the first one but only requires a single linear inequality to represent the aforementioned  multiple ratio function.
Two key features of the proposed \BnC algorithms, which make them able to solve large-scale \SCFLP{s}, are: (i) the \LP relaxations of the  \MILP formulations are very strong as the underlying linear inequalities are able to provide a linear characterization of $\conv(\U_y)$ (rather than $\U_y$, as achieved by classic submodular inequalities); and (ii) the separation of the exponential families of the linear inequalities can be conducted by solving a well-investigated $r$-median   problem (along with some acceleration techniques).   
By extensive computational experiments, we demonstrated that the proposed \BnC algorithms significantly outperform the state-of-the-art \BnC algorithm based on submodular inequalities of \cite{Qi2024} (adapted to our problem) and the evolutionary algorithm of  \citet{Biesinger2016}.
In particular, they are capable of finding optimal solutions for  \SCFLP instances with up to 1000 customers and facilities within a time limit of two hours.

\bibliography{shorttitles,SCFLP}
\newpage

\appendix
%
%

\section{An example to show that the \LP relaxation  of problem \eqref{prob:submod-disaggr}  is 
	strictly stronger than that of problem \eqref{prob:submod-aggr}}
\label{appendix1}

Consider an example of the \SCFLP where
\[ v = 
\begin{bmatrix}
	1& 2& 1\\
	2&  1& 1\\
	1&  1& 2
\end{bmatrix}, \ 
w = \begin{bmatrix}
	1\\
	1
\end{bmatrix}, \ m = n=3, \ p = 2,\  r = 3.
\]
Then the \LP relaxation of problem \eqref{prob:submod-aggr} reduces to 
	\begin{equation*}
	\begin{aligned}
		\max_{\eta,\,  x \in [0,1]^3}  \{\eta\, : \,  \eqref{eq1} -\eqref{eq9} \}, ~\text{where}
	\end{aligned}
\end{equation*}

\vspace*{-1cm}
{\small
\begin{multicols}{2} 
	\begin{align}
		& x_1 + x_2 +x_3 = 2,\label{eq1}\\
		& \eta \leq \frac{3}{2}\label{eq2},\\
		& \eta \leq \frac{1}{6}x_2 + \frac{1}{6}x_3 + \frac{7}{6},\label{eq3} \\
		&\eta \leq \frac{1}{6}x_1 + \frac{1}{6}x_3 + \frac{7}{6},\label{eq4}\\
		&\eta \leq \frac{1}{6}x_1 + \frac{1}{6}x_2 + \frac{7}{6}\label{eq5},		
	\end{align}
	
	\columnbreak 
	
	\begin{align}
		& \eta \leq \frac{1}{6}x_3 + \frac{4}{3},\label{eq6}\\ 
		&\eta \leq \frac{1}{6}x_2 + \frac{4}{3},\label{eq7}\\ 
		&\eta \leq \frac{1}{6}x_1 + \frac{4}{3}, \label{eq8}\\
		& \eta \leq \frac{7}{6}x_1 + \frac{7}{6}x_2 + \frac{7}{6}x_3. \label{eq9}
	\end{align}
\end{multicols}}%
\noindent An optimal solution of the above problem is $(\eta^1, x^1) = (\frac{25}{18}, \frac{2}{3}, \frac{2}{3}, \frac{2}{3})$. 
The \LP relaxation of \eqref{prob:submod-disaggr}, or equivalently of \eqref{prob:submod-disaggr-card1}, reduces to
\begin{equation*}
	\begin{aligned}
		\max_{\eta,\, x \in [0,1]^3} ~ \{\eta \, : \,  \eqref{eq1}-\eqref{eq21} \}, ~ \text{where}
	\end{aligned}
\end{equation*}
\vspace*{-1cm}

{\small
\begin{multicols}{2}
	\begin{align}
		&  \eta \leq \frac{1}{2} x_1 + \frac{1}{2} x_2 + \frac{1}{3} x_3 + \frac{5}{6},\label{eq10}\\
		& \eta \leq \frac{1}{2} x_1 + \frac{1}{3} x_2 + \frac{1}{2} x_3 + \frac{5}{6},\label{eq11}\\
		& \eta \leq \frac{1}{3} x_1 + \frac{1}{2} x_2 + \frac{1}{2} x_3 + \frac{5}{6},\label{eq12} \\ 
		&  \eta \leq \frac{1}{2} x_1 + \frac{1}{3} x_2 + \frac{1}{3} x_3 + 1,\label{eq13}\\
		& \eta \leq  \frac{1}{3} x_1 +\frac{1}{2} x_2 + \frac{1}{3} x_3 + 1,\label{eq14}\\
		& \eta \leq \frac{1}{3} x_1 + \frac{1}{3} x_2 + \frac{1}{2} x_3 + 1\label{eq15}, 
	\end{align}
	
	\columnbreak 
	
	\begin{align}
		&  \eta \leq \frac{5}{6} x_1 + \frac{5}{6} x_2 + \frac{2}{3} x_3 + \frac{1}{2},\label{eq16}\\
		& \eta \leq \frac{5}{6} x_1 + \frac{2}{3} x_2 + \frac{5}{6} x_3 + \frac{1}{2},\label{eq17}\\
		&\eta \leq \frac{2}{3} x_1 + \frac{5}{6} x_2+ \frac{5}{6} x_3 + \frac{1}{2}, \label{eq18}\\ 
		&  \eta \leq \frac{1}{6} x_1 + \frac{1}{6} x_2 + \frac{1}{6} x_3 + 1, \label{eq19}\\
		&  \eta \leq \frac{1}{2} x_1 + \frac{1}{2} x_2 + \frac{1}{2} x_3 + \frac{2}{3}, \label{eq20}\\
		&  \eta \leq \frac{5}{6} x_1 + \frac{5}{6} x_2 + \frac{5}{6} x_3 + \frac{1}{3},\label{eq21}
	\end{align} 
\end{multicols}}%
\noindent  
An optimal solution of the above problem is  $(\eta^2, x^2) = (\frac{4}{3}, 1,1,0)$ (which satisfies the integrality constraints).
As $\eta^2 < \eta^1$, we conclude that 
the \LP relaxation of \eqref{prob:submod-disaggr} could be strictly stronger than that of \eqref{prob:submod-aggr}.

\section{Proof of \cref{thm:submod-convexhull}}\label{sec:appendix-pf-convexhull}

To establish the result in \cref{thm:submod-convexhull}, we need the following lemma.
\begin{lemma}\label{lemma:decompose-convex-hull}
	Given a sequence of convex functions $f_1, f_2, \ldots, f_m: [0,1]^n \rightarrow \R_{+}$, let
	\begin{equation}
		\CZ = \left\{(\eta,\zeta, x) \in \R \times \R^m \times
		\{0,1\}^n: \eta = \sum_{i=1}^m w_i \zeta_i,\  (\zeta_i, x) \in \CZ_i,\ \forall \  i \in [m] \right\},
	\end{equation}
	where $w_i\in  \mathbb{R}_+$ and $\CZ_i = \left\{(\zeta_i, x) \in \R \times
	\{0,1\}^n: \zeta_i \le  f_i(x) \right\}$ for $i \in [m]$.
	Then,
	\begin{equation}\label{convX}
		\conv(\CZ) = \left\{(\eta,\zeta, x) \in \R \times \R^m \times
		[0,1]^n: \eta = \sum_{i=1}^m w_i \zeta_i,\, (\zeta_i,x) \in \conv(\CZ_i),\ \forall \ i \in [m]\right\}.
	\end{equation}
\end{lemma}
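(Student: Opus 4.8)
The plan is to establish the two inclusions of \eqref{convX} separately, writing $\mathcal{R}$ for the set on its right-hand side. The inclusion $\conv(\CZ) \subseteq \mathcal{R}$ is the routine direction and uses no convexity. Any point of $\conv(\CZ)$ is a finite convex combination $\sum_k \lambda_k(\eta^k,\zeta^k,x^k)$ of points of $\CZ$, so that $x=\sum_k \lambda_k x^k \in [0,1]^n$, $\eta=\sum_k \lambda_k\eta^k=\sum_i w_i\left(\sum_k \lambda_k \zeta_i^k\right)=\sum_i w_i\zeta_i$, and, for each fixed $i$, the pair $(\zeta_i,x)=\sum_k \lambda_k(\zeta_i^k,x^k)$ is a convex combination of points $(\zeta_i^k,x^k)\in\CZ_i$ with the common weights $\lambda_k$ and common points $x^k$; hence $(\zeta_i,x)\in\conv(\CZ_i)$.

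For the reverse inclusion I would reduce membership in $\conv(\CZ)$ to a single \emph{simultaneous representation} of $x$. Since every point of $\CZ$ has binary $x$ and they all share the recession cone $\{(\eta,\zeta,\bm 0): \eta=\sum_i w_i\zeta_i,\ \zeta\le\bm 0\}$, a point $(\eta,\zeta,x)$ with $x\in[0,1]^n$ and $\eta=\sum_i w_i\zeta_i$ lies in $\conv(\CZ)$ if and only if there is a finitely supported probability vector $\mu=(\mu_v)_{v\in\{0,1\}^n}$ with $\sum_v \mu_v v=x$ and $\sum_v \mu_v f_i(v)\ge \zeta_i$ for every $i\in[m]$. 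Indeed, given such a $\mu$, the fact that each $\CZ_i$ consists of downward (unbounded-below) rays lets me choose heights $\zeta_i^v\le f_i(v)$ with $\sum_v \mu_v\zeta_i^v=\zeta_i$ exactly, absorbing the slack $\sum_v \mu_v f_i(v)-\zeta_i\ge 0$; then $(\eta,\zeta,x)=\sum_v \mu_v\left(\sum_i w_i\zeta_i^v,\zeta^v,v\right)$ exhibits the required convex combination of $\CZ$-points.

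Thus the whole argument hinges on producing one $\mu$ that represents $x$ and is simultaneously good for all $i$, and this is the step I expect to be the main obstacle. Convexity enters only through the per-index description of $\conv(\CZ_i)$: it guarantees, for each fixed $i$, a representation $\mu^i$ with $\sum_v \mu^i_v f_i(v)=\tilde f_i(x)\ge\zeta_i$, where $\tilde f_i$ is the concave envelope of $f_i$ over $\{0,1\}^n$. The trouble is that the optimal $\mu^i$ generally differ across $i$, and merging them into a common $\mu$ does not follow from convexity alone. To close this gap I would exploit the structure of the functions that actually occur in the application, where each $f_i(x)=\max_{j}c^y_{ij}x_j$ is a monotone submodular max-function whose concave envelope is given explicitly by \cref{thm:sub-Nemhauser1981-max}; the aim would be to construct a single $x$-dependent distribution (for instance from a common threshold/superlevel-set decomposition of $x$) that certifies $\sum_v \mu_v f_i(v)\ge\zeta_i$ for all $i$ at once. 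An alternative route that sidesteps the coupling entirely is to pass to the extended description of $\conv(\CU_y)$ provided by \cref{thm:proj_Wy^C}, where the single allocation block $z$ plays the role of the common representation and the reverse inclusion reduces to the LP-duality computation rather than a combinatorial merging argument.
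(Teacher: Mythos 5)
Your proposal proves only the easy inclusion; the reverse inclusion you reduce, correctly, to producing a \emph{single} finitely supported probability vector $\mu$ on $\{0,1\}^n$ with $\sum_v \mu_v v = x$ and $\sum_v \mu_v f_i(v) \ge \zeta_i$ for all $i$ simultaneously, and you leave that step open, so as a proof the proposal is incomplete. But your diagnosis of the obstacle is exactly right, and the gap cannot be closed: the lemma as stated is false. Take $n=m=2$, $w_1=w_2=1$, $f_1(x)=\max\{x_1+x_2-1,0\}$, $f_2(x)=\max\{x_1-x_2,0\}$ (both convex and nonnegative on $[0,1]^2$), and the point $(\eta,\zeta_1,\zeta_2,x)=\bigl(1,\tfrac12,\tfrac12,(\tfrac12,\tfrac12)\bigr)$. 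It lies in the right-hand side of \eqref{convX}, since $(\zeta_1,x)=\tfrac12\bigl(1,(1,1)\bigr)+\tfrac12\bigl(0,(0,0)\bigr)\in\conv(\CZ_1)$ and $(\zeta_2,x)=\tfrac12\bigl(1,(1,0)\bigr)+\tfrac12\bigl(0,(0,1)\bigr)\in\conv(\CZ_2)$; yet any common $\mu$ would need $\tfrac12\le\sum_v\mu_v f_1(v)=\mu_{(1,1)}$ and $\tfrac12\le\sum_v\mu_v f_2(v)=\mu_{(1,0)}$, forcing the first coordinate of the barycenter to equal $1\neq\tfrac12$, so the point is not in $\conv(\CZ)$. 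The paper's own proof founders at precisely the step you refused to wave through: its chain $\zeta_i\le f_i(x)\le\sum_{\ell}\lambda_{\ell}f_i(x^{\ell})$ opens with the claim $\conv(\CZ_i)\subseteq\{(\zeta_i,x):\zeta_i\le f_i(x)\}$, which holds only when the hypograph of $f_i$ is convex, i.e., when $f_i$ is \emph{concave}; for the convex max functions relevant here it fails (for $f_i=\max\{x_1,x_2\}$ the point $\bigl(1,(\tfrac12,\tfrac12)\bigr)$ lies in $\conv(\CZ_i)$ while $f_i(\tfrac12,\tfrac12)=\tfrac12$).

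None of your three proposed repairs can close the gap either. First, restricting to the monotone max functions of the application does not rescue the statement: with $m=n=3$, $w=\bm{1}$, $c_1=(2,1,0)$, $c_2=(2,0,1)$, $c_3=(0,2,1)$, and $x=(0.9,0.5,0.1)$, the three concave envelopes (given by \cref{thm:sub-Nemhauser1981-max}) equal $1.9$, $1.9$, $1.1$, and each value is attained in the corresponding $\conv(\CZ_i)$, so the right-hand side of \eqref{convX} contains a point with $\eta=4.9$; but the affine function $1+3x_1+2x_2+x_3$ majorizes $f_1+f_2+f_3$ at all eight binary points and equals $4.8$ at $x$, so every point of $\conv(\CZ)$ with this $x$ has $\eta\le 4.8$. (Perturbing the zero entries to small $\varepsilon>0$ preserves the gap, so this simultaneously falsifies \cref{thm:submod-convexhull} and \cref{thm:proj_Wy^C} for genuine \SCFLP data; what survives is that the improved submodular inequalities are valid and exact at binary $x$, so the \MILP formulations and the \BnC algorithms remain correct, only the convex-hull claims fail.) Second, the threshold/superlevel-set decomposition of $x$ is the Lov\'asz-extension distribution, which for a submodular max function certifies its \emph{convex} closure rather than the concave closure you need: at $x=(\tfrac12,\tfrac12)$ with $f=\max\{x_1,x_2\}$ it yields $\tfrac12$, whereas $\conv(\CZ_i)$ admits $\zeta_i=1$. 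Third, appealing to \cref{thm:proj_Wy^C} is circular: in the paper that theorem is deduced from \cref{thm:submod-convexhull}, which is deduced from this very lemma, and the LP-duality computation of Section 5.1 by itself only identifies the projection of the extended relaxation with the right-hand side of \eqref{convX} after projecting out $\zeta$, not with the convex hull.
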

\begin{proof}{Proof}
	Let $\CQ$ be the right-hand side of \eqref{convX}.
	Observe that
	\begin{align*}
		\CZ &= \left\{(\eta,\zeta, x) \in \R \times \R^m \times
		\{0,1\}^n: \eta = \sum_{i=1}^m w_i \zeta_i,\ (\zeta_i, x) \in \CZ_i,\ \forall \  i \in [m] \right\}\\
		&\!\!\!\overset{(a)}{=} \left\{(\eta,\zeta, x) \in \R \times \R^m \times
		[0,1]^n: \eta = \sum_{i=1}^m w_i \zeta_i,\ (\zeta_i, x) \in \CZ_i,\ \forall \  i \in [m] \right\}\\
		&\subseteq \left\{(\eta,\zeta, x) \in \R \times \R^m \times
		[0,1]^n: \eta = \sum_{i=1}^m w_i \zeta_i,\ (\zeta_i, x) \in \conv(\CZ_i),\ \forall \  i \in [m] \right\}= \CQ,
	\end{align*}
	where (a) follows $\CZ_i \subseteq \mathbb{R}\times \{0,1\}^n$ for all $i \in [m]$.
	This, together with the fact that $\CQ$ is a polyhedron, implies $\conv(\CZ)\subseteq \CQ$.
	
	Next, we prove $\CQ \subseteq \conv(\CZ)$.
	Letting  $(\eta, \zeta, x) \in \CQ$,
	then $(\zeta_1, x) \in \conv(\CZ_1)$.
	Therefore,
	there exist points $(\zeta_1^1, x^1), \dots, (\zeta_1^t, x^t) \in \CZ_1$ and scalars $\lambda_1, \dots, \lambda_t$ such that 
	\begin{equation}\label{tmp-convex-combination}
		(\zeta_1, x) = \sum_{\ell = 1}^t \lambda_{\ell} (\zeta_1^{\ell}, x^{\ell}), \quad 
		\sum_{\ell = 1}^t \lambda_{\ell} = 1, \quad
		\lambda_1, \dots, \lambda_t \ge 0.
	\end{equation}
	For each $\ell \in[t]$, we define $(\bar{\eta}^{\ell}, \bar{\zeta}^{\ell}, \bar{x}^{\ell}) $ as  
	$\bar{x}^{\ell} = x^{\ell}$, $\bar{\eta}^{\ell} = \sum_{i = 1}^m w_i \bar{\zeta}_i^{\ell}$, and 
	\begin{equation}\label{defzeta}
		\bar{\zeta}_i^{\ell} = \left\{
		\begin{aligned}
			& \zeta_1^{\ell}, && \text{if} ~ i = 1;\\
			& \zeta_i, && \text{if}~i \geq 2 ~\text{and}~ \sum_{\ell' = 1}^t \lambda_{\ell'} f_i(x^{\ell'}) = 0;\\
			&  \frac{f_i(x^{\ell}) \zeta_i}{\sum_{\ell' = 1}^t \lambda_{\ell'} f_i(x^{\ell'})}, &&\text{otherwise},
		\end{aligned}
		\right. \ \forall \ i \in [m].
	\end{equation}
	For $i \in\{2, \ldots, m\}$, we have 
	\begin{equation}\label{tmp-chain}
		\zeta_i \overset{(a)}{\le} f_i(x) \overset{(b)}{=} f_i\left(\sum_{\ell = 1}^t \lambda_{\ell} x^{\ell}\right)
		\overset{(c)}{\le} \sum_{\ell = 1}^t \lambda_{\ell} f_i(x^{\ell}), 
	\end{equation}
	where (a) follows from $(\zeta_i, x) \in \conv(\CZ_i) \subseteq \{ (\zeta_i, x)\, : \, \zeta_i \leq f_i(x) \}$ (as $(\eta, \zeta, x) \in \CQ$), 
	(b) follows from \eqref{tmp-convex-combination}, and (c) follows from the convexity of function $f_i$.
	If $\sum_{\ell' = 1}^t \lambda_{\ell'} f_i(x^{\ell'}) = 0$, then from 
	the definition of $\bar{\zeta}_i^{\ell}$ in \eqref{defzeta},
	the nonnegativity of $f_i$, and	\eqref{tmp-chain}, 
	we have $	\bar{\zeta}_i^{\ell} = \zeta_i \le
	\sum_{\ell' = 1}^t \lambda_{\ell'} f_i(x^{\ell'}) = 0 \le f_i(\bar{x}^{\ell})$;
	otherwise, from \eqref{tmp-chain}, we have ${ \frac{\zeta_i}{\sum_{\ell' = 1}^t \lambda_{\ell'} f_i(x^{\ell'})} \le 1}$ 
	and thus $\bar{\zeta}_i^{\ell} = \frac{\zeta_i}{\sum_{\ell' = 1}^t \lambda_{\ell'} f_i(x^{\ell'})} \cdot f_i(x^{\ell})
	\le f_i(x^\ell)= f_i(\bar{x}^{\ell})$.
		Thus, $\bar{\zeta}_i^{\ell} \le f_i(\bar{x}^{\ell})$ holds for all $i \in \{2, \ldots, m\}$, which, together with $\bar{\zeta}_i^{1} \le f_i(\bar{x}^{1})$, implies
	$(\bar{\eta}^{\ell}, \bar{\zeta}^{\ell}, \bar{x}^{\ell}) \in \CZ$. 
	Moreover, from the definition of $\bar{\zeta}^\ell_i$ in \eqref{defzeta}, it follows
	\begin{equation}\label{tmpeq0}
		\begin{aligned}
			& \zeta_i 
			{=} \sum_{\ell = 1}^t \lambda_{\ell} \zeta_i^{\ell} = \sum_{\ell = 1}^t \lambda_{\ell} \bar{\zeta}_i^{\ell} 
			\ \text{for}~  i = 1, 
			\\
			& \zeta_i 
			= \sum_{\ell = 1}^t \lambda_{\ell} \zeta_i = \sum_{\ell = 1}^t \lambda_{\ell} \bar{\zeta}_i^{\ell}
			\ \text{for} \ i \geq 2 ~\text{with}~ \sum_{\ell' = 1}^t \lambda_{\ell'} f_i(x^{\ell'}) = 0, 
			\\
			& \zeta_i = \frac{\sum_{\ell = 1}^t \lambda_{\ell} f_i(x^{\ell})}{\sum_{\ell' = 1}^t \lambda_{\ell'} f_i(x^{\ell'})} \zeta_i 
			= \sum_{\ell = 1}^t \lambda_{\ell} \cdot \frac{f_i(x^{\ell}) \zeta_i}{\sum_{\ell' = 1}^t \lambda_{\ell'} f_i(x^{\ell'})} 
			= \sum_{\ell = 1}^t \lambda_{\ell} \bar{\zeta}_i^{\ell} 
			\ \text{for} \ i \geq 2~\text{with}~ \sum_{\ell' = 1}^t \lambda_{\ell'} f_i(x^{\ell'}) > 0.
		\end{aligned}
	\end{equation}
	Thus, 
	\begin{equation}\label{temp-sum}
		\eta = \sum_{i = 1}^m w_i \zeta_i 
		= \sum_{i = 1}^m w_i\sum_{\ell = 1}^t \lambda_{\ell} \bar{\zeta}_i^{\ell}
		= \sum_{\ell = 1}^t \lambda_{\ell}  \sum_{i = 1}^m  w_i\bar{\zeta}_i^{\ell}
		= \sum_{\ell = 1}^t \lambda_{\ell}  \bar{\eta}^{\ell}.
	\end{equation}
	Together with \eqref{tmp-convex-combination},  \eqref{tmpeq0}, and \eqref{temp-sum},  we have 
	\begin{equation*}
		(\eta, \zeta, x) = \sum_{\ell = 1}^t \lambda_{\ell} (\bar{\eta}^{\ell}, \bar{\zeta}^{\ell}, x^{\ell}).
	\end{equation*}
	This shows that $(\eta, \zeta, x)$ is a convex combination of finite points 
	$\{(\bar{\eta}^{\ell}, \bar{\zeta}^{\ell}, x^{\ell})\}_{\ell \in [t]}$ in $\CZ$,  and thus $(\eta, \zeta, x) \in \conv(\CZ)$.
	This completes the proof. 
\end{proof}

Now we are able to present the proof of \cref{thm:submod-convexhull}.
\begin{proof}{Proof of Theorem \ref{thm:submod-convexhull}}
	Given $y \in \Y$ and
	$i \in [m]$,  define
	\begin{align}\label{def:Xiy}
		\CZ_{i} = \{(\zeta_i, x)\in \R \times \{0,1\}^n: \zeta_i \le  h_i(x,y)\}.
	\end{align}
	By \cref{thm:submodularity} (i), it follows $h_i(x,y)=  \max_{j \in [n]} c^y_{ij}x_j$, 
	which, together with \cref{thm:sub-Nemhauser1981-max}, implies $\conv(\CZ_{i}) = \CP_{i}$, 
	where 
	\begin{equation}\label{def:Xiy-SC}
		\begin{aligned}
			\CP_{i}&:
			= \left\{ (\zeta_i, x)\in \R\times [0,1]^n: \zeta_i \le  c^y_{i\ell} +  \sum_{j=1}^{n} \left( c^y_{ij} -
			c^y_{i\ell}\right)^+ x_j, \ \forall \ \ell \in {[n+1]} \right\}. 
		\end{aligned}
	\end{equation}
	Let 
	\begin{align*}
		& \CZ := \left\{(\eta,\zeta, x) \in \R \times \R^m \times
		[0,1]^n: \eta = \sum_{i=1}^m w_i \zeta_i, ~
		(\zeta_i, x) \in \CZ_{i},\ \forall \  i \in [m] \right\},\\
		&\CP := \left\{(\eta, \zeta, x) \in \R \times \R^m \times
		[0,1]^n: \eta = \sum_{i=1}^m w_i \zeta_i, ~  (\zeta_i,x) \in \CP_{i},\ \forall \ i \in [m]\right\}.
	\end{align*}
	Then, $\U_y = \proj_{\eta,x} (\CZ)$ and
	$\CQ = \proj_{\eta,x} (\CP)$, where $\U_y$ is defined in \eqref{def:U_y_max} and $\CQ=\{(\eta, x) \in \R \times [0,1]^n: \eqref{ineq:submod-explicit-G-D}\} $ is the right-hand side of \eqref{convU}.
	As a result, 
	\begin{equation*}
		\begin{aligned}
			\conv(\CZ) &=\conv\left( \left\{(\eta,\zeta, x) \in \R \times \R^m \times
			\{0,1\}^n: \eta = \sum_{i=1}^m w_i \zeta_i \text{ and } 
			(\zeta_i, x) \in \CZ_{i},\ \forall \ i \in[m] \right\}\right)\\
			&\overset{(a)}{=} \left\{(\eta, \zeta, x) \in \R \times \R^m \times
			[0,1]^n: \eta = \sum_{i=1}^m w_i \zeta_i,\, (\zeta_i,x) \in \conv(\CZ_{i}),\ \forall \ i \in[m]\right\} \overset{(b)}{=} \CP, 
		\end{aligned}
	\end{equation*}
	where (a) follows from \cref{lemma:decompose-convex-hull} and
	the fact that  $h_i(\cdot,y)$ is a form of $M(x)=\max_{j \in [n]} c_j x_j $
	with $c\in \R_+^n$, which is convex;
	and (b) 
	follows from $\conv(\CZ_{i}) =\CP_i$.
	As a result,
	\begin{equation*}
		\conv(\U_y) = \conv(\proj_{\eta,x} (\CZ)) 
		\overset{(a)}{=} \proj_{\eta,x} (\conv(\CZ)) = \proj_{\eta,x} (\CP)\\
		= \CQ, 
	\end{equation*}
	where (a) follows from the fact that
	$\conv(\proj_x (\CZ')) = \proj_x (\conv(\CZ'))$ for any mixed integer set $\CZ'$ in the $x$ and $y$ space.  This completes the proof. 
\end{proof}

\section{Proof of \cref{thm:proj-Wy}}\label{sec:appendix-pf-proj-Wy}
\begin{proof}{Proof of \cref{thm:proj-Wy}}
	Let
	$(\hat{\eta},\hat{x}, \hat{z}^y) \in \W_{y}$. 
	For $i \in [m]$, if $\hat{z}^y_{i\cdot} = \boldsymbol{0}^\top$, then $0 =  \sum_{j=1}^n c^y_{ij} \hat{z}^y_{ij} \leq \max_{j \in [n]} c^y_{ij} \hat{x}_j$; 
	otherwise, by \eqref{z-linear-4} and $\hat{z}^y \in \{0,1\}^{mn}$, $\hat{z}^y_{i\cdot} = \be_{{\ell_i}}^\top$ holds for some ${\ell_i}\in [n]$,
	and thus
	\begin{equation*} 
		\sum_{j=1}^n c^y_{ij} \hat{z}^y_{ij} =  c^y_{i{\ell_i}} \hat{z}^y_{i{\ell_i}} \overset{(a)}{\leq}  c^y_{i{\ell_i}} \hat{x}_{{\ell_i}} \leq \max_{j \in [n]} c^y_{ij} \hat{x}_j, 
	\end{equation*}
	where (a) follows from \eqref{z-linear-3}.
	In both cases, it follows $\sum_{j=1}^n c^y_{ij} \hat{z}^y_{ij} \leq \max_{j \in [n]} c^y_{ij} \hat{x}_j$ and $\hat{\eta} \leq \sum_{i=1}^m w_i \sum_{j=1}^n c^y_{ij} \hat{z}^y_{ij}\leq \sum_{i=1}^mw_i \max_{j \in [n]} c^y_{ij} \hat{x}_j $.
	Hence, $(\hat{\eta},\hat{x}) \in \CU_{y} $, and $\proj_{\eta, x} (\W_{y}) \subseteq \CU_{y}$.

	Now, suppose that $(\bar{\eta},\bar{x}) \in \CU_{y} $.
	If $\bar{x} = \boldsymbol{0}$, then $\bar{\eta} \leq 0$, and thus $(\bar{\eta}, \bar{x}, \boldsymbol{0}) \in \W_{y}$. 
	Otherwise, let $\bar{z}^y \in \{0,1\}^{mn}$ be such that $\bar{z}^y_{i\cdot}:=\be_{j_i}^\top$, where $j_i \in [n]$ satisfying $\bar{x}_{j_i}=1$ and $c^y_{ij_i} \bar{x}_{j_i} = \max_{j \in [n]} c^y_{ij} \bar{x}_j$. Hence, 
	\begin{equation*}
		\max_{j \in [n]} c^y_{ij} \bar{x}_j  =  c^y_{ij_i} \bar{x}_{j_i} =   c^y_{ij_i} \bar{z}^y_{ij_i}  = \sum_{j=1}^n c^y_{ij} \bar{z}^y_{ij}.
	\end{equation*}
	Therefore,  $\bar{\eta}  \leq \sum_{i=1}^mw_i\max_{j \in [n]} c^y_{ij} \bar{x}_j =  \sum_{i=1}^m w_i \sum_{j=1}^n c^y_{ij} \bar{z}^y_{ij} $ and $(\bar{\eta}, \bar{x}, \bar{z}^y) \in \W_{y}$.
	This shows that $\CU_{y} \subseteq \proj_{\eta, x} (\W_{y}) $, which completes the proof. 
\end{proof}

\section{Proof of \cref{thm:exist-z^y}}\label{sec:appendix-pf-exist-z^y}
For each $i \in [m]$, let $\sigma_i(1),\ldots$, $\sigma_i(n)$ be a permutation of $[n]$ satisfying 
\begin{equation}\label{def:sigma-permutation-v}
	v_{i\sigma_i(1)} \geq \cdots \geq v_{i\sigma_i(n)}. 
\end{equation} 
To prove \cref{thm:exist-z^y}, we first provide a favorable property of $\{c_{ij}^y\}_{i\in [m],~j \in [n]}$ ($y \in \Y$); that is, the ordering of $\{c_{ij}^y\}_{i\in [m],~j \in [n]}$ is independent of $y \in \Y$.  
\begin{lemma} \label{lemma:sigma}
	For each $i \in [m]$, let $\sigma_i(\cdot)$ be the permutation satisfying  \eqref{def:sigma-permutation-v} and $\{c_{ij}^y\}_{i\in [m],~j \in [n]}$ be defined in \eqref{def:c^y_{ij}}. 
	Then for any $y \in \Y$, it follows 
	\begin{equation}
		c^{y}_{i\sigma_i(1)} \geq \cdots \geq c^{y}_{i\sigma_i(n)}.
	\end{equation}
\end{lemma}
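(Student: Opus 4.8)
The plan is to reduce the claim to the elementary monotonicity of the scalar map $t \mapsto t/(t+A)$. First I would fix an arbitrary customer $i \in [m]$ and an arbitrary follower location vector $y \in \Y$, and observe that the quantity $A_i^y := \max_{k \in [n]} v_{ik} y_k$ appearing in the denominator of $c^y_{ij}$ in \eqref{def:c^y_{ij}} does not depend on the index $j$. Hence, for this fixed $i$ and $y$, I may write $c^y_{ij} = v_{ij}/(v_{ij}+A_i^y)$ as the image of $v_{ij}$ under a single univariate function with a constant nonnegative parameter $A_i^y \geq 0$. This reinterpretation is the whole point: the ordering among the $c^y_{ij}$ over $j$ is governed by a one-dimensional transformation of the $v_{ij}$.

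Next I would establish that this univariate map is nondecreasing. Concretely, for any reals $t \geq s > 0$ and any $A \geq 0$, a direct computation gives $\frac{t}{t+A} - \frac{s}{s+A} = \frac{A(t-s)}{(t+A)(s+A)} \geq 0$, since both the numerator and the denominator are nonnegative. This shows that larger attractiveness values $v_{ij}$ are sent to larger (or equal) coefficients $c^y_{ij}$.

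Finally I would apply this monotonicity along the fixed permutation $\sigma_i$. By the defining property \eqref{def:sigma-permutation-v}, we have $v_{i\sigma_i(1)} \geq \cdots \geq v_{i\sigma_i(n)}$; applying the inequality above to each consecutive pair $t = v_{i\sigma_i(k)}$ and $s = v_{i\sigma_i(k+1)}$ yields $c^y_{i\sigma_i(k)} \geq c^y_{i\sigma_i(k+1)}$ for every $k$, and chaining these comparisons gives the desired ordering $c^y_{i\sigma_i(1)} \geq \cdots \geq c^y_{i\sigma_i(n)}$. Since $i$ and $y$ were arbitrary, the lemma follows. I do not expect any genuine obstacle here; the only point that deserves explicit emphasis is that the permutation $\sigma_i$ is selected once and for all from the $y$-independent data $\{v_{ij}\}_{j \in [n]}$, so the \emph{same} ordering of indices simultaneously sorts the coefficients $\{c^y_{ij}\}_{j \in [n]}$ for every $y \in \Y$. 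This is exactly the ``ordering independent of $y$'' property that the statement asserts and that will later be exploited in the separation routine.
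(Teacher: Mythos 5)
Your proof is correct and follows essentially the same route as the paper: the paper's one-line argument also rests on the fact that, with the denominator term $\max_{k\in[n]}v_{ik}y_k$ fixed (independent of $j$), the map $v_{ij}\mapsto c^y_{ij}$ is order-preserving, which is exactly the monotonicity of $t\mapsto t/(t+A)$ that you verify explicitly. Your version merely spells out the elementary computation the paper leaves implicit, so there is no substantive difference.
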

\begin{proof}{Proof}
	The proof follows directly from $c_{ij}^y= \frac{v_{ij}} 
	{v_{ij} + \max_{k \in [n]}v_{ik}y_k}$ for $i \in [m]$ and $j \in [n]$ 
	and the fact that $v_{ij_1} \geq v_{ij_2}$ holds if and only if $\frac{v_{ij_1}} 
	{v_{ij_1} + \max_{k \in [n]}v_{ik}y_k} \geq \frac{v_{ij_2}} 
	{v_{ij_2} + \max_{k \in [n]}v_{ik}y_k} $ holds, where $j_1, j_2 \in [n]$.  
\end{proof}

Using \cref{lemma:sigma}, we can provide the optimal solutions for problems \eqref{def:h_i}, \eqref{def:h_i dual}, and \eqref{def:g_i}.
\begin{lemma}\label{rmk:optimal-solutions}
	Given $y \in  \Y$ and $i \in [m]$, 
	let $\sigma_i(1),\ldots$, $\sigma_i(n)$ be a permutation of $[n]$ satisfying \eqref{def:sigma-permutation-v}. 
	For $x \in [0,1]^n$, let
	\begin{equation}\label{def:k_i-star}
		k_i: = \left\{
		\begin{aligned}
			&1, \quad &&\text{if }~x_{\sigma_i(1)} = 1;\\ 
			&\max \left\{\ell \in [n] : \sum_{j=1}^\ell x_{\sigma_i(j)} < 1 \right\}, \quad && \text{otherwise}. 
		\end{aligned}
		\right.
	\end{equation}
	Then, we have the following three statements: 
	(i) an optimal solution $\hat{z}_{i\cdot}$ of problem \eqref{def:h_i} is given by 
	\begin{equation}\label{def:optimal_z}
		\hat{z}_{i\sigma_i(j)} = \left\{
		\begin{aligned}
			& x_{\sigma_i(j)}, \quad && \text{if}~j \leq k_i;\\
			& 1-\sum_{\ell=1}^{ k_i}x_{\sigma_i(\ell)}, \quad && \text{if}~j =  k_i+1;\\
			& 0, \quad && \text{otherwise},
		\end{aligned}
		\right. \ \forall \ j \in [n];
	\end{equation}
	with  $\sigma_i(n+1) := n+1$ and ${c_{i(n+1)}^y} :=0$, 	
	(ii) an optimal solution $(\hat{u}_{i\cdot}, \hat{w}_{i\cdot})$  of the dual problem \eqref{def:h_i dual} is given by
	\begin{equation}\label{def:optimal_u}
		\hat{u}_i = 	c^{y}_{i\sigma_i(k_i+1)}, \  \hat{w}_{ij} = (c^{y}_{ij}-c^{y}_{i\sigma_i(k_i+1)})^+, \ \forall \ j \in [n];
	\end{equation}  
	and (iii) an optimal solution of problem $g^y_i(x) = \min_{\ell_i \in [n+1]}g_{i,\ell_i}^{y}(x)$ 
	defined in \eqref{def:g_i} is given by  $\hat{\ell}_i = \sigma_i(k_i+1)$.
\end{lemma}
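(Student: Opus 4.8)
The plan is to prove statements (i) and (ii) simultaneously via linear programming duality between the primal problem \eqref{def:h_i} and its dual \eqref{def:h_i dual}, and then to deduce (iii) as an immediate corollary. The single structural fact that drives everything is that, by \cref{lemma:sigma}, the coefficients inherit the ordering $c^{y}_{i\sigma_i(1)} \geq \cdots \geq c^{y}_{i\sigma_i(n)} \geq c^{y}_{i\sigma_i(n+1)} = 0$, so \eqref{def:h_i} is a continuous (fractional-knapsack) LP whose greedy solution saturates the budget $\sum_j z_{ij} \le 1$ in order of decreasing $c^{y}$, and $k_i$ in \eqref{def:k_i-star} is precisely the threshold index at which this budget is first exhausted.

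First I would verify that the candidate $\hat{z}_{i\cdot}$ in \eqref{def:optimal_z} is primal feasible. The bounds $0 \le \hat{z}_{i\sigma_i(j)} \le x_{\sigma_i(j)}$ are immediate for $j \le k_i$; for the partially filled index $j = k_i+1$ one invokes the definition of $k_i$: the inequality $\sum_{\ell=1}^{k_i} x_{\sigma_i(\ell)} < 1$ yields $\hat{z}_{i\sigma_i(k_i+1)} \ge 0$, while the maximality of $k_i$ (equivalently $\sum_{\ell=1}^{k_i+1} x_{\sigma_i(\ell)} \ge 1$) yields $\hat{z}_{i\sigma_i(k_i+1)} \le x_{\sigma_i(k_i+1)}$. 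The budget constraint holds with equality $\sum_j \hat{z}_{ij} = 1$ in the generic case and as a strict inequality in the boundary case $k_i = n$, where the dummy index $n+1$ carries no real variable and $c^{y}_{i(n+1)} = 0$. The case $x_{\sigma_i(1)} = 1$ is checked separately and is consistent with both branches of \eqref{def:k_i-star}.

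Next I would verify dual feasibility of $(\hat{u}_i, \hat{w}_{i\cdot})$ in \eqref{def:optimal_u}: nonnegativity is clear, and $\hat{u}_i + \hat{w}_{ij} = \max\{c^{y}_{i\sigma_i(k_i+1)}, c^{y}_{ij}\} \ge c^{y}_{ij}$. The key step is then to compute both objective values and show they coincide. Using the descending order of the coefficients, the term $(c^{y}_{i\sigma_i(j)} - c^{y}_{i\sigma_i(k_i+1)})^+$ equals $c^{y}_{i\sigma_i(j)} - c^{y}_{i\sigma_i(k_i+1)}$ exactly for $j \le k_i$ and vanishes for $j \ge k_i+1$; substituting this into the dual objective $\hat{u}_i + \sum_j x_j \hat{w}_{ij}$ collapses it to $\sum_{j=1}^{k_i} c^{y}_{i\sigma_i(j)} x_{\sigma_i(j)} + c^{y}_{i\sigma_i(k_i+1)}\bigl(1 - \sum_{j=1}^{k_i} x_{\sigma_i(j)}\bigr)$, which is precisely the primal objective $\sum_j c^{y}_{ij}\hat{z}_{ij}$. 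By weak LP duality, equality of these two objective values certifies that $\hat{z}_{i\cdot}$ and $(\hat{u}_i, \hat{w}_{i\cdot})$ are optimal for \eqref{def:h_i} and \eqref{def:h_i dual}, respectively, establishing (i) and (ii).

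Finally, for (iii) I would observe that $g^{y}_{i,\hat{\ell}_i}(x)$ with $\hat{\ell}_i = \sigma_i(k_i+1)$ is exactly the dual objective $\hat{u}_i + \sum_j x_j \hat{w}_{ij}$ evaluated at the point \eqref{def:optimal_u}, hence equals the common optimal value $f^{y}_i(x)$. Since every $\ell_i \in [n+1]$ yields, through $u_i = c^{y}_{i\ell_i}$ and $w_{ij} = (c^{y}_{ij} - c^{y}_{i\ell_i})^+$, a dual feasible point with objective $g^{y}_{i,\ell_i}(x)$, weak duality gives $g^{y}_{i,\ell_i}(x) \ge f^{y}_i(x)$ for all $\ell_i$; therefore $g^{y}_i(x) = \min_{\ell_i} g^{y}_{i,\ell_i}(x) = f^{y}_i(x) = g^{y}_{i,\hat{\ell}_i}(x)$, so $\hat{\ell}_i$ attains the minimum in \eqref{def:g_i}. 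I expect the main obstacle to be purely the careful bookkeeping of the boundary cases $x_{\sigma_i(1)} = 1$ and $k_i = n$ together with the handling of the dummy index $n+1$; the duality argument itself is routine once the greedy threshold structure is made explicit.
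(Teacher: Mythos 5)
Your proof is correct and takes essentially the same approach as the paper's: the paper likewise obtains (i) from the coefficient ordering in \cref{lemma:sigma} (greedy optimality for the LP \eqref{def:h_i}), (ii) from dual feasibility of $(\hat{u}_{i\cdot},\hat{w}_{i\cdot})$ together with LP duality, and (iii) from the identity \eqref{hdef} combined with statement (ii). Your write-up merely makes explicit what the paper compresses — the primal/dual feasibility checks, the matching objective values certifying optimality by weak duality, and the boundary cases $x_{\sigma_i(1)}=1$ and $k_i=n$ — so there is no substantive difference.
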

\begin{proof}{Proof}
	Statement (i) follows from \cref{lemma:sigma}.
	Statement (ii) follows from the fact that $(\hat{u}_{i\cdot}, \hat{w}_{i\cdot})$ is a feasible solution of problem \eqref{def:h_i dual} and the LP duality theory.
	Statement (iii) follows from \eqref{hdef} and statement (ii). 
\end{proof}

We are now ready to present the proof of \cref{thm:exist-z^y}.
\begin{proof}{Proof of Theorem \ref{thm:exist-z^y}}
	Let $(\eta, x, \{\bar{z}^y\}_{y \in \Y})$ be an optimal solution of problem \eqref{prob:ZLP-pre}, $\hat{z}_{i\cdot}$ be defined as in \eqref{def:optimal_z} with 
	$\sigma_i(\cdot)$ being a permutation of $[n]$ satisfying \eqref{def:sigma-permutation-v}  for each $i \in [m]$, 
	and	 let $z^y=\hat{z}$ for $y \in \Y$. 
	By the definition of $\hat{z}$ in \eqref{def:optimal_z} and $x \in \{0,1\}^n$, $z^y =\hat{z} \in \{0,1\}^{mn}$ must hold for all $y \in \Y$. 
	In addition, constraints \eqref{z-linear-3} and \eqref{z-linear-4} hold at $(\eta, x, \{z^y\}_{y \in \Y})$.
	Hence, to prove that $(\eta, x, \{z^y\}_{y \in \Y})$ is an optimal solution of \eqref{prob:ZLP-pre}, it suffices to show \eqref{z-linear-1}.
	The latter is true since for any $y \in \Y$, it follows
	\begin{equation}\label{tempinequ}
		\eta \overset{(a)}{\leq}\sum_{i=1}^m w_i \sum_{j=1}^n c^y_{ij} \bar{z}^y_{ij} \overset{(b)}{\leq}  
		\sum_{i=1}^m w_i f^y_i(x)  \overset{(c)}{=} \sum_{i=1}^mw_i \sum_{j \in[n]}c_{ij}^y \hat{z}_{ij} \overset{(d)}{=} \sum_{i=1}^m w_i \sum_{j=1}^n c^y_{ij} z^y_{ij},
	\end{equation}%
	where (a) follows from the feasibility of $(\eta, x, \{\bar{z}^y\}_{y\in \Y})$,  
	(b) follows from the definition of $f_i^y(x)$ in \eqref{def:h_i} and and $\bar{z}^y_{i\cdot}$ is a feasible solution of problem \eqref{def:h_i},
	(c) follows from \cref{rmk:optimal-solutions} (i),
	and (d) follows from  $z^y=\hat{z}$ for $y \in \Y$. 
	This proves the statement for problem \eqref{prob:ZLP-pre}.
	The proof for the \LP relaxation of problem \eqref{prob:ZLP-pre} is analogous. 
\end{proof}

\section{Flow chart of the \BnC algorithms based on formulations \eqref{prob:submod-aggr},
	\eqref{prob:submod-disaggr-card1}, and \eqref{prob:ZLP}}\label{sec:appendix-BnCframework}
\begin{figure}[H]
	\centering
			\tikzstyle{startstop} = [rectangle, rounded corners, text centered, draw=black, fill=gray!50, minimum width=2cm, minimum height=0.8cm]
				\tikzstyle{process} = [rectangle, text centered, draw=black, fill=gray!20, minimum width=3cm, minimum height=0.8cm]
				\tikzstyle{decision} = [diamond, text centered, draw=black, fill=white, aspect=3.5]
				\tikzstyle{arrow} = [thick,->,>=stealth]
				\begin{tikzpicture} [node distance=55pt]
						\tikzstyle{every text node part} = [font=\footnotesize]
						\node (start) 	  [startstop, align=center] {start};
						\node (emptynodelist)	[decision, below of=start] {node list empty?};	
						\node (selectnode) [process, right= 65pt of start] {select a node};
						\node (lp) [process, below of=selectnode] {solve LP relaxation};			
						\node (vioBDcut) [decision, below of=lp, align=center] {inequalities violated?};
						\node (intLPsol)	[decision, below of=vioBDcut] {integral LP solution?};
						\node (branch) [process, below of=intLPsol] {branching};				
						\node (vault after empty 1) [right=8pt of emptynodelist] {};			
						\node (stop) [startstop, left=65pt of branch] {stop};									
						\node (addBDcut) [process, right=20pt of lp] {add violated inequalities};				
						\draw[arrow] (start) -- (emptynodelist);
						\draw[arrow] (emptynodelist) -- node[right]{Y} (stop);				
						\draw[arrow] (selectnode) -- (lp);
						\draw[arrow] (lp) -- (vioBDcut);
						\draw[arrow] (vioBDcut) -- node[right] {N}(intLPsol);
						\draw[arrow] (intLPsol) -- node[right] {N} (branch);
						\draw[arrow] (addBDcut) -- (lp);				
						\draw[arrow] (vioBDcut) -- node[above] {Y} (vioBDcut -| addBDcut) --  (addBDcut);				
						\draw[arrow] (emptynodelist.east) -| node[right] {N} ($(emptynodelist)!.5!(selectnode)$) |- (selectnode) ;				
						\coordinate (proj1) at ($(addBDcut.east) + (0.1, 0)$);
						\coordinate (proj2) at ($(addBDcut.east) + (0.1, -1)$);
						\draw[arrow] (intLPsol) -- node[above] {Y} ($(proj1)!(intLPsol)!(proj2)$);
						\draw[arrow] (branch) -| ($(addBDcut.east)+ (0.1, 0)$)  |- ($(start.west) + (-0.5,0.5)$) -| ($(emptynodelist.west) + (-0.5, 0)$) |- (emptynodelist.west);
				\end{tikzpicture}
	\caption{Flow chart of the \BnC algorithm based on formulations \eqref{prob:submod-aggr},
		\eqref{prob:submod-disaggr-card1}, and \eqref{prob:ZLP}.} \label{figure:framework}
\end{figure}
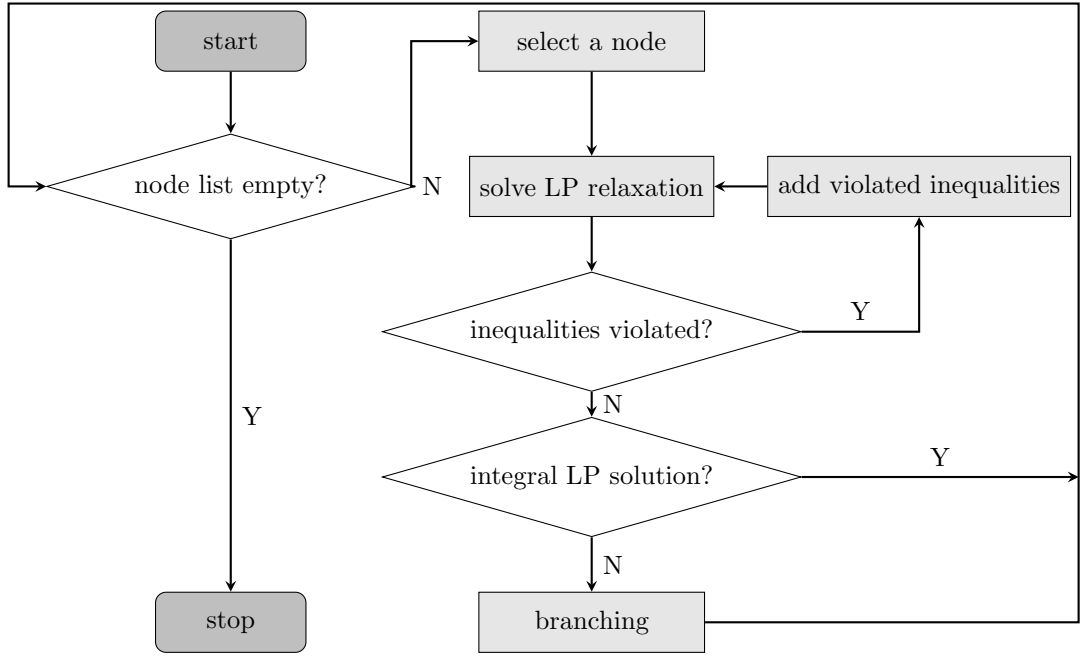

\section{Proof of \cref{thm:newsub-sepa}}\label{sec:appendix-newsub-sepa}
 \begin{proof}{Proof of \cref{thm:newsub-sepa}}
 	Note that for a fixed $y$, the inner subproblem of \eqref{sepa:newsub} is
 	{\small \begin{equation}\label{sepa:layer-2-app}
 			\begin{aligned}
 				\min_{(\ell_1, \dots, \ell_m) \in [n+1]^m} \sum_{i=1}^m w_i \left( c^y_{i\ell_i} +  \sum_{j=1}^{n} \left( c^y_{ij} - 
 				c^y_{i\ell_i}\right)^+ x^*_j\right)
 			 &=\sum_{i=1}^mw_i \min_{\ell_i \in [n+1]} \left( c^y_{i\ell_i} +  \sum_{j=1}^{n} \left( c^y_{ij} - 
 			 c^y_{i\ell_i}\right)^+ x^*_j\right) \\
 				&\overset{(a)}{=}  \sum_{i=1}^m w_i \left( c^y_{i\sigma_i(k_i+1)} +  \sum_{j=1}^{n} \left( c^y_{ij} - 
 				c^y_{i\sigma_i(k_i+1)}\right)^+ x^*_j\right), 
 			\end{aligned}
 	\end{equation}}%
 	where (a) follows from \cref{rmk:optimal-solutions} (iii) {in Appendix \ref{sec:appendix-pf-exist-z^y}} and the definition of $g_{i,\ell_i}^{y}(x)$ in \eqref{def:g_i} for $i \in [m]$ and $\ell_i \in [n]$.
 	This shows the first equality in \eqref{pmp-newsub-layer0}.
 	As for the second equality in \eqref{pmp-newsub-layer0}, 
 	it follows from the definition of 
 	$\{c_{ij}^y\}_{i\in [m],~j \in [n]}$ in \eqref{def:c^y_{ij}} that 
 	{\small
 		 \begin{equation*}
 			\begin{aligned}
 				&c^y_{i\sigma_i(k_i+1)} +  \sum_{j=1}^{n} \left( c^y_{ij} - 
 				c^y_{i\sigma_i(k_i+1)}\right)^+ x^*_j \\
 				&\quad=\frac{v_{i\sigma_i(k_i+1)}}{v_{i\sigma_i(k_i+1)} + \max_{k \in [n]}v_{ik}y_k} +  \sum_{j=1}^{k_i} \left( \frac{v_{i\sigma_i(j)}}{v_{i\sigma_i(j)} +
 					\max_{k \in [n]}v_{ik}y_k} - \frac{v_{i\sigma_i(k_i+1)}}{v_{i\sigma_i(k_i+1)} + \max_{k \in [n]}v_{ik}y_k}\right) x^*_{\sigma_i(j)}\\
 				&\quad=\frac{v_{i\sigma_i(k_i+1)}}{v_{i\sigma_i(k_i+1)}  + \max_{k \in [n]}v_{ik}y_k}\left(1-\sum_{j=1}^{k_i}x^*_{\sigma_i(j)}\right) +
 				\sum_{j=1}^{k_i}  \frac{v_{i\sigma_i(j)}}{v_{i\sigma_i(j)} +\max_{k \in [n]}v_{ik}y_k} x^*_{\sigma_i(j)}\\
 				&\quad\overset{(a)}{=}\left(\min_{k \in [n]}\frac{v_{i\sigma_i(k_i+1)}}{v_{i\sigma_i(k_i+1)} + v_{ik}}y_k \right) \left(1-\sum_{j=1}^{k_i}x^*_{\sigma_i(j)}\right)  +
 				\sum_{j=1}^{k_i} \min_{k \in [n]}\frac{ x^*_{\sigma_i(j)}v_{i\sigma_i(j)}}{v_{i\sigma_i(j)} + v_{ik}}y_k \\
 				&\quad{=}\min_{k \in [n]}\frac{\left(1-\sum_{j=1}^{k_i}x^*_{\sigma_i(j)}\right)v_{i\sigma_i(k_i+1)}}{v_{i\sigma_i(k_i+1)} + v_{ik}}y_k +
 				\min_{k \in [n]} \sum_{j=1}^{k_i} \frac{ x^*_{\sigma_i(j)}v_{i\sigma_i(j)}}{v_{i\sigma_i(j)} + v_{ik}}y_k \\
 				&\quad\overset{(b)}{=}\min_{k \in [n]} \left(\frac{\left(1-\sum_{j=1}^{k_i}x^*_{\sigma_i(j)}\right)v_{i\sigma_i(k_i+1)}}{v_{i\sigma_i(k_i+1)} + v_{ik}} +
 				\sum_{j=1}^{k_i} \frac{ x^*_{\sigma_i(j)}v_{i\sigma_i(j)}}{v_{i\sigma_i(j)} + v_{ik}}  \right)y_k  = \min_{k \in [n]} b_{ik} y_k,
 			\end{aligned}
 	\end{equation*}}%
 	where  $(a)$ and $(b)$ follow from 
 	$\argmin_{k \in [n]} \frac{\beta}{\alpha+v_{ik}}y_k = \argmax_{k \in [n]}v_{ik} y_k$  for all $(\alpha,\beta) 
 	\in  \R^2_{+}$ (as $1-\sum_{j=1}^{k_i}x^*_j \geq 0$). 
 	Therefore,
 	\begin{equation*}
 		\sum_{i=1}^m w_i \left( c^y_{i\sigma_i(k_i+1)} +  \sum_{j=1}^{n} \left( c^y_{ij} - 
 		c^y_{i\sigma_i(k_i+1)}\right)^+ x^*_j\right) =\sum_{i=1}^m w_i \min_{k \in [n]} b_{ik} y_k.	\qedhere
 	\end{equation*}
 	
 \end{proof}

\section{Detailed statistics of instance-wise computational results of
	 the \BnC algorithms based on \eqref{prob:submod-disaggr-card1} and  \eqref{prob:ZLP} on large-scale \SCFLP instances}\label{subsection:Appendix-table}
	 
The detailed statistics of instance-wise computational results is summarized in \cref{tbl:New_ZLP_1000_merge} in the next page.
\newpage
	\begin{longtable}{lrrrrrrrrrrrrrrr}
		\caption{Detailed statistics of instance-wise computational results of
			the \BnC algorithms based on \eqref{prob:submod-disaggr-card1} and  \eqref{prob:ZLP} on large-scale \SCFLP instances.} 
			\label{tbl:New_ZLP_1000_merge}\\
		\toprule
		\multicolumn{1}{c}{\multirow{2}{*}{$m$}} 
		& \multicolumn{1}{c}{\multirow{2}{*}{$n$}} 
		& \multicolumn{1}{c}{\multirow{2}{*}{$p$}} 
		& \multicolumn{1}{c}{\multirow{2}{*}{$r$}} 
		& \multicolumn{6}{c}{\tblNewSubmodular} 
		& \multicolumn{6}{c}{\tblZLP} 
		\\
		\cmidrule(r){5-10}
		\cmidrule(r){11-16}
		& & &	&    \tblO	& \tblTorG	&    \tblN	&   \tblrG	& \tblCutTime	& \tblCutNum	&    \tblO	& \tblTorG	&    \tblN	&   \tblrG	& \tblCutTime	& \tblCutNum\\
		\midrule
		\endfirsthead
		
		\toprule
		\multicolumn{1}{c}{\multirow{2}{*}{$m$}} 
		& \multicolumn{1}{c}{\multirow{2}{*}{$n$}} 
		& \multicolumn{1}{c}{\multirow{2}{*}{$p$}} 
		& \multicolumn{1}{c}{\multirow{2}{*}{$r$}} 
		& \multicolumn{6}{c}{\tblNewSubmodular} 
		& \multicolumn{6}{c}{\tblZLP} \\
		\cmidrule(r){5-10}
		\cmidrule(r){11-16}
		& & &	&    \tblO	& \tblTorG	&    \tblN	&   \tblrG	& \tblCutTime	& \tblCutNum	&    \tblO	& \tblTorG	&    \tblN	&   \tblrG	& \tblCutTime	& \tblCutNum\\
		\midrule
		\endhead
		
		\midrule
		\multicolumn{14}{r}{\small\emph{continued on next page}} \\
		\midrule
		\endfoot
		
		\bottomrule
		\endlastfoot
		
500&500&2&2         	& 1358.000&7.7	& 3	& $<0.1$	&      5.7	&       40	& 1358.000&119.8	& 0	& $<0.1$	&      5.8	&        3\\
&&&5 	               	& 882.851&256.6	& 28	& 0.5	&    251.9	&     1911	& 882.851&1153.0	& 27	& 0.5	&     56.2	&       25\\
&&&10               	& 707.142&149.4	& 22	& 0.2	&    146.2	&      994	& 707.142&1030.7	& 14	& 0.2	&     52.8	&       25\\
&&&50               	& 500.237&7.4	& 3	& $<0.1$	&      5.3	&       74	& 500.237&226.7	& 0	& $<0.1$	&      4.4	&        4\\
&&5&2               	& 1827.562&279.9	& 51	& $<0.1$	&    242.5	&     5878	& 1827.562&1019.9	& 58	& $<0.1$	&     87.7	&       13\\
&&&5                	& 1358.000&28.4	& 7	& $<0.1$	&     25.4	&      488	& 1358.000&81.6	& 3	& $<0.1$	&     10.4	&        3\\
&&&10               	& 1121.053&18.7	& 3	& $<0.1$	&     15.9	&      587	& 1121.053&76.6	& 7	& $<0.1$	&     12.2	&        6\\
&&&50               	& 826.549&12.0	& 3	& $<0.1$	&      9.4	&      415	& 826.549&52.8	& 0	& $<0.1$	&      3.9	&        4\\
&&10&2              	& 2006.534& (0.1)	& 71	& $<0.1$	&    871.4	&    60323	& 2007.298&3027.3	& 180	& $<0.1$	&    370.7	&       16\\
&&&5                	& 1594.947&181.7	& 3	& $<0.1$	&     73.1	&     6293	& 1594.947&109.6	& 3	& $<0.1$	&      9.4	&        6\\
&&&10               	& 1358.000&18.3	& 3	& $<0.1$	&     13.4	&      834	& 1358.000&49.8	& 1	& $<0.1$	&      7.3	&        6\\
&&&50               	& 1036.268&27.3	& 3	& $<0.1$	&     21.8	&     1936	& 1036.268&35.5	& 0	& $<0.1$	&      3.4	&        3\\
&&50&2              	& 2215.763&231.7	& 0	& $<0.1$	&    116.2	&    24271	& 2215.763&23.7	& 0	& $<0.1$	&      6.5	&        4\\
&&&5                	& 1889.451&300.0	& 3	& $<0.1$	&     93.6	&    19075	& 1889.451&16.1	& 0	& $<0.1$	&      6.8	&        4\\
&&&10               	& 1679.732&182.1	& 0	& $<0.1$	&     43.9	&     8628	& 1679.732&14.1	& 0	& $<0.1$	&      5.7	&        4\\
&&&50               	& 1358.000&140.0	& 0	& $<0.1$	&     35.6	&     6820	& 1358.000&10.7	& 0	& $<0.1$	&      3.3	&        3\\
&800&2&2            	& 1358.000&12.8	& 3	& $<0.1$	&     10.7	&       41	& 1358.000&295.9	& 0	& $<0.1$	&      8.7	&        3\\
&&&5                	& 891.549&1632.7	& 94	& 0.8	&   1546.0	&     7861	& 891.549&3756.3	& 75	& 1.3	&    218.2	&       29\\
&&&10               	& 701.716&280.8	& 13	& 0.2	&    274.9	&     1565	& 701.716&2213.7	& 15	& 0.1	&     55.4	&       23\\
&&&50               	& 502.442&31.4	& 5	& $<0.1$	&     29.2	&      243	& 502.442&380.6	& 3	& $<0.1$	&      8.5	&        8\\
&&5&2               	& 1818.286&525.2	& 45	& $<0.1$	&    445.4	&     6564	& 1818.286&3383.1	& 107	& $<0.1$	&    283.8	&       15\\
&&&5                	& 1358.000&44.4	& 3	& $<0.1$	&     39.9	&     1008	& 1358.000&176.3	& 0	& $<0.1$	&      8.8	&        3\\
&&&10               	& 1109.254&136.1	& 13	& $<0.1$	&    122.5	&     2111	& 1109.254&270.2	& 9	& $<0.1$	&     19.7	&        5\\
&&&50               	& 816.707&367.6	& 16	& $<0.1$	&    310.2	&     7137	& 816.707&317.1	& 7	& $<0.1$	&     15.1	&       12\\
&&10&2              	& 2012.616&4079.7	& 145	& 0.2	&   1037.6	&    29625	& 2012.616&2236.9	& 116	& $<0.1$	&    318.8	&       16\\
&&&5                	& 1606.703&141.7	& 13	& 0.1	&    110.6	&     4964	& 1606.703&156.7	& 3	& $<0.1$	&     20.2	&        8\\
&&&10               	& 1358.000&23.8	& 3	& $<0.1$	&     18.3	&      745	& 1358.000&75.3	& 0	& $<0.1$	&      6.3	&        3\\
&&&50               	& 1041.993&45.3	& 3	& $<0.1$	&     38.5	&     2484	& 1041.993&77.9	& 0	& $<0.1$	&      5.6	&        4\\
&&50&2              	& 2213.550&626.7	& 6	& $<0.1$	&    274.5	&    45127	& 2213.550&62.3	& 5	& $<0.1$	&     22.6	&        5\\
&&&5                	& 1899.261&944.8	& 11	& $<0.1$	&    153.2	&    19684	& 1899.261&114.1	& 19	& $<0.1$	&     62.8	&       11\\
&&&10               	& 1674.007&1684.0	& 58	& $<0.1$	&    219.1	&    20451	& 1674.007&68.7	& 28	& $<0.1$	&     33.7	&        3\\
&&&50               	& 1358.000&403.9	& 7	& $<0.1$	&     87.4	&    12133	& 1358.000&44.3	& 13	& $<0.1$	&     16.0	&        9\\
&1000&2&2           	& 1358.000&13.2	& 3	& $<0.1$	&     11.2	&       45	& 1358.000&500.6	& 0	& $<0.1$	&     10.6	&        3\\
&&&5                	& 891.948&2572.9	& 94	& 0.5	&   2363.9	&    11398	& 891.948&5697.3	& 82	& 0.5	&    310.3	&       31\\
&&&10               	& 701.716&124.8	& 9	& 0.1	&    121.3	&      695	& 701.716&3320.0	& 13	& 0.1	&     43.1	&       16\\
&&&50               	& 501.455&66.5	& 9	& $<0.1$	&     64.3	&      233	& 501.455&645.4	& 5	& $<0.1$	&     15.9	&       13\\
&&5&2               	& 1818.882&1437.0	& 99	& $<0.1$	&   1187.0	&    11506	& 1818.882&6501.3	& 151	& $<0.1$	&    467.8	&       14\\
&&&5                	& 1358.000&51.2	& 5	& $<0.1$	&     47.6	&      432	& 1358.000&317.0	& 3	& $<0.1$	&     21.2	&        8\\
&&&10               	& 1109.686&57.0	& 5	& $<0.1$	&     52.7	&      813	& 1109.686&345.6	& 3	& $<0.1$	&     14.2	&        7\\
&&&50               	& 815.466&224.9	& 9	& $<0.1$	&    185.6	&     3016	& 815.466&437.9	& 9	& $<0.1$	&     20.1	&       13\\
&&10&2              	& 2013.183&2787.7	& 56	& 0.1	&    815.2	&    24979	& 2013.183&4039.8	& 124	& $<0.1$	&    401.2	&       15\\
&&&5                	& 1606.279&417.2	& 5	& $<0.1$	&    200.4	&    10134	& 1606.279&235.7	& 5	& $<0.1$	&     24.5	&        7\\
&&&10               	& 1358.000&44.1	& 3	& $<0.1$	&     35.5	&     1511	& 1358.000&140.6	& 0	& $<0.1$	&      7.6	&        3\\
&&&50               	& 1040.400&51.0	& 3	& $<0.1$	&     42.4	&     2242	& 1040.400&116.4	& 0	& $<0.1$	&      6.1	&        4\\
&&50&2              	& 2214.527&2323.9	& 33	& $<0.1$	&    590.2	&    74790	& 2214.527&239.0	& 33	& $<0.1$	&    125.3	&       12\\
&&&5                	& 1900.510&2006.1	& 31	& $<0.1$	&    308.2	&    28631	& 1900.510&140.1	& 21	& $<0.1$	&     71.6	&        9\\
&&&10               	& 1675.600&584.4	& 3	& $<0.1$	&    108.6	&    13344	& 1675.600&35.0	& 0	& $<0.1$	&      9.3	&        3\\
&&&50               	& 1358.000&536.4	& 3	& $<0.1$	&    105.4	&    12981	& 1358.000&41.9	& 3	& $<0.1$	&      8.9	&        3\\ \hline
800&500&2&2         	& 2184.000&18.6	& 3	& $<0.1$	&     16.6	&       70	& 2184.000&316.8	& 0	& $<0.1$	&      9.3	&        3\\
&&&5                	& 1442.152&808.3	& 64	& 0.7	&    790.2	&     4257	& 1442.152&5182.2	& 85	& 0.7	&    323.7	&       38\\
&&&10               	& 1126.576&100.9	& 12	& $<0.1$	&     97.9	&      840	& 1126.576&1991.1	& 9	& $<0.1$	&     27.3	&       12\\
&&&50               	& 821.152&283.8	& 21	& 0.2	&    280.1	&     1284	& 821.152&810.7	& 3	& $<0.1$	&     20.2	&       14\\
&&5&2               	& 2914.296&995.6	& 49	& $<0.1$	&    682.4	&    16441	& 2914.296&3469.2	& 35	& $<0.1$	&    107.4	&       15\\
&&&5                	& 2184.000&21.4	& 3	& $<0.1$	&     18.8	&      337	& 2184.000&205.7	& 0	& $<0.1$	&      9.9	&        3\\
&&&10               	& 1786.780&24.4	& 3	& $<0.1$	&     21.9	&      510	& 1786.780&188.1	& 0	& $<0.1$	&      9.4	&        4\\
&&&50               	& 1331.486&23.8	& 5	& $<0.1$	&     20.9	&      595	& 1331.486&202.3	& 3	& $<0.1$	&      9.3	&        7\\
&&10&2              	& 3240.524&225.4	& 23	& $<0.1$	&    196.7	&     5899	& 3240.524&1228.8	& 24	& $<0.1$	&    110.7	&       16\\
&&&5                	& 2581.220&42.0	& 3	& $<0.1$	&     35.3	&     1753	& 2581.220&125.8	& 0	& $<0.1$	&     11.0	&        4\\
&&&10               	& 2184.000&22.4	& 3	& $<0.1$	&     18.0	&      733	& 2184.000&78.9	& 0	& $<0.1$	&      7.9	&        3\\
&&&50               	& 1693.300&25.9	& 3	& $<0.1$	&     21.6	&     1010	& 1693.300&71.0	& 0	& $<0.1$	&      5.6	&        3\\
&&50&2              	& 3546.318& (0.0)	& 43	& $<0.1$	&    853.2	&    85399	& 3546.559&2213.6	& 572	& $<0.1$	&   1497.9	&       12\\
&&&5                	& 3036.491&1613.8	& 31	& $<0.1$	&    311.0	&    30928	& 3036.491&97.3	& 17	& $<0.1$	&     46.9	&        9\\
&&&10               	& 2674.700&698.3	& 9	& $<0.1$	&    171.5	&    18928	& 2674.700&51.1	& 7	& $<0.1$	&     18.4	&        3\\
&&&50               	& 2184.000&719.5	& 9	& $<0.1$	&    183.4	&    20945	& 2184.000&39.7	& 3	& $<0.1$	&     10.6	&        3\\
&800&2&2            	& 2184.000&32.5	& 3	& $<0.1$	&     30.5	&      111	& 2184.000&828.1	& 0	& $<0.1$	&     16.3	&        3\\
&&&5                	& 1415.621&3350.5	& 110	& 1.1	&   3231.9	&    12120	& 1411.821& (1.4)	& 11	& 1.4	&    124.9	&       22\\
&&&10               	& 1122.056&1682.8	& 46	& 0.5	&   1656.5	&     4751	& 1122.056& (0.5)	& 6	& 0.5	&     97.5	&       17\\
&&&50               	& 799.141&14.4	& 5	& $<0.1$	&     12.3	&       73	& 799.141&2394.6	& 3	& $<0.1$	&     16.1	&        8\\
&&5&2               	& 2933.936&5957.1	& 122	& $<0.1$	&   2777.6	&    37266	& 2931.925& (0.3)	& 41	& 0.2	&    260.7	&       16\\
&&&5                	& 2184.000&109.2	& 3	& $<0.1$	&     98.7	&     1659	& 2184.000&543.6	& 0	& $<0.1$	&     19.1	&        3\\
&&&10               	& 1778.004&7132.1	& 84	& 0.1	&   2625.8	&    44571	& 1778.004&2169.9	& 37	& $<0.1$	&    111.6	&       12\\
&&&50               	& 1306.711& (0.1)	& 33	& 0.1	&   2966.4	&    54900	& 1307.081&2668.7	& 43	& $<0.1$	&    116.9	&       37\\
&&10&2              	& 3237.819& (0.1)	& 43	& 0.1	&   1525.3	&    45013	& 3215.528& (0.8)	& 4	& 0.8	&     81.8	&       15\\
&&&5                	& 2588.917&837.6	& 27	& $<0.1$	&    494.3	&    10572	& 2588.917&2467.1	& 47	& $<0.1$	&    360.7	&       13\\
&&&10               	& 2184.000&44.7	& 3	& $<0.1$	&     37.2	&      887	& 2184.000&260.4	& 3	& $<0.1$	&     18.3	&        5\\
&&&50               	& 1664.814&119.0	& 5	& $<0.1$	&     81.9	&     2778	& 1664.814&262.7	& 5	& $<0.1$	&     16.1	&        7\\
&&50&2              	& 3568.798& (0.0)	& 18	& $<0.1$	&   1097.0	&   104982	& 3568.798&1080.0	& 86	& $<0.1$	&    353.1	&       12\\
&&&5                	& 3060.560& (0.0)	& 23	& $<0.1$	&    790.4	&    52487	& 3060.612&2656.5	& 174	& $<0.1$	&   1453.8	&       15\\
&&&10               	& 2703.168&1471.5	& 9	& $<0.1$	&    329.8	&    27390	& 2703.168&240.2	& 43	& $<0.1$	&    112.6	&        7\\
&&&50               	& 2184.000&1256.7	& 9	& $<0.1$	&    312.8	&    25238	& 2184.000&77.5	& 8	& $<0.1$	&     23.4	&        6\\
&1000&2&2           	& 2184.000&42.0	& 3	& $<0.1$	&     39.9	&      133	& 2184.000&1249.4	& 0	& $<0.1$	&     19.9	&        3\\
&&&5                	& 1415.597& (0.8)	& 78	& 0.9	&   6614.6	&    19353	& 1403.724& (3.4)	& 2	& 3.4	&     37.8	&        9\\
&&&10               	& 1121.391&3334.8	& 76	& 0.7	&   3229.3	&     8682	& 1116.095& (2.8)	& 2	& 2.8	&     28.6	&        7\\
&&&50               	& 797.457&96.3	& 5	& $<0.1$	&     94.0	&      230	& 797.457&4115.0	& 5	& $<0.1$	&     24.1	&       11\\
&&5&2               	& 2938.506&1728.0	& 65	& $<0.1$	&   1473.1	&     9301	& 2888.492& (1.8)	& 8	& 1.8	&    113.2	&       11\\
&&&5                	& 2184.000&60.3	& 3	& $<0.1$	&     53.9	&      702	& 2184.000&806.8	& 0	& $<0.1$	&     25.6	&        4\\
&&&10               	& 1782.909&6705.6	& 41	& $<0.1$	&   2306.6	&    31284	& 1782.909&3180.6	& 50	& $<0.1$	&    222.7	&       18\\
&&&50               	& 1308.218&120.3	& 5	& $<0.1$	&    101.0	&     1355	& 1308.218&1182.5	& 3	& $<0.1$	&     24.8	&       10\\
&&10&2              	& 3236.448& (0.2)	& 41	& 0.2	&   1505.9	&    36018	& 3216.068& (2.6)	& 2	& 2.6	&     62.1	&       11\\
&&&5                	& 2585.051&252.7	& 14	& 0.2	&    210.0	&     3068	& 2585.051&1022.5	& 11	& $<0.1$	&     96.3	&        6\\
&&&10               	& 2184.000&61.5	& 3	& $<0.1$	&     48.6	&     1077	& 2184.000&446.8	& 3	& $<0.1$	&     22.4	&        6\\
&&&50               	& 1661.573&55.6	& 3	& $<0.1$	&     41.9	&     1142	& 1661.573&352.7	& 0	& $<0.1$	&      8.0	&        3\\
&&50&2              	& 3570.485&5465.8	& 29	& $<0.1$	&    980.0	&    73490	& 3570.485&1156.2	& 55	& $<0.1$	&    298.4	&       12\\
&&&5                	& 3059.782&1224.6	& 3	& $<0.1$	&    291.0	&    22085	& 3059.782&153.1	& 1	& $<0.1$	&     34.6	&        6\\
&&&10               	& 2706.427&1099.6	& 3	& $<0.1$	&    260.7	&    20199	& 2706.427&111.9	& 6	& $<0.1$	&     29.1	&        6\\
&&&50               	& 2184.000&891.4	& 0	& $<0.1$	&    224.0	&    17672	& 2184.000&53.3	& 0	& $<0.1$	&      7.8	&        3\\ \hline
1000&500&2&2        	& 2716.000&18.4	& 1	& $<0.1$	&     16.3	&       38	& 2716.000&568.5	& 0	& $<0.1$	&     12.3	&        3\\
&&&5                	& 1760.418&1859.3	& 88	& 1.3	&   1775.0	&     9537	& 1760.118& (1.1)	& 49	& 3.1	&    308.0	&       34\\
&&&10               	& 1398.562&316.1	& 21	& 0.4	&    311.5	&     1512	& 1398.562&5688.7	& 13	& 0.2	&    113.2	&       24\\
&&&50               	& 1002.031&138.2	& 10	& $<0.1$	&    135.7	&      665	& 1002.031&1213.5	& 5	& $<0.1$	&     37.8	&       19\\
&&5&2               	& 3647.576&522.8	& 33	& $<0.1$	&    444.6	&     8293	& 3647.576&6271.7	& 48	& $<0.1$	&    250.5	&       15\\
&&&5                	& 2716.000&118.0	& 3	& $<0.1$	&    100.7	&     2497	& 2716.000&719.0	& 3	& $<0.1$	&     24.4	&        6\\
&&&10               	& 2220.333&194.8	& 7	& $<0.1$	&    157.7	&     4021	& 2220.333&810.6	& 3	& $<0.1$	&     28.6	&        7\\
&&&50               	& 1637.317&1193.7	& 30	& $<0.1$	&    763.6	&    17431	& 1637.317&2467.9	& 18	& 0.1	&     72.5	&       22\\
&&10&2              	& 4030.023&5298.1	& 44	& $<0.1$	&   1206.1	&    47553	& 4030.023&5788.0	& 45	& $<0.1$	&    215.3	&       12\\
&&&5                	& 3211.667&1410.5	& 20	& $<0.1$	&    555.5	&    17555	& 3211.667&729.6	& 15	& $<0.1$	&     82.4	&        7\\
&&&10               	& 2716.000&70.9	& 3	& $<0.1$	&     53.6	&     1846	& 2716.000&319.0	& 3	& $<0.1$	&     19.7	&        5\\
&&&50               	& 2079.311&1168.7	& 17	& $<0.1$	&    397.2	&    17227	& 2079.311&466.1	& 33	& $<0.1$	&     63.2	&       10\\
&&50&2              	& 4429.355& (0.0)	& 24	& $<0.1$	&   1322.3	&   138158	& 4429.497&1015.6	& 93	& $<0.1$	&    358.9	&       12\\
&&&5                	& 3792.352& (0.1)	& 7	& $<0.1$	&    827.5	&    71958	& 3793.855&3075.0	& 350	& $<0.1$	&   1692.5	&       11\\
&&&10               	& 3352.524& (0.0)	& 15	& $<0.1$	&    568.5	&    45772	& 3352.689&334.8	& 50	& $<0.1$	&    206.6	&       11\\
&&&50               	& 2715.420& (0.0)	& 18	& $<0.1$	&    647.9	&    47960	& 2716.000&250.6	& 90	& $<0.1$	&    166.5	&        7\\
&800&2&2            	& 2716.000&21.0	& 5	& 1.6	&     19.0	&       69	& 2716.000&1538.4	& 0	& $<0.1$	&     17.5	&        3\\
&&&5                	& 1765.264&5231.5	& 129	& 1.4	&   5001.4	&    14850	& 1750.855& (2.2)	& 3	& 2.2	&     77.7	&       15\\
&&&10               	& 1406.534&1918.3	& 56	& 0.5	&   1868.1	&     6872	& 1395.197& (2.1)	& 2	& 2.1	&     34.5	&        7\\
&&&50               	& 1005.860&1385.6	& 41	& 0.4	&   1363.7	&     4044	& 1003.198& (0.4)	& 5	& 0.4	&     38.7	&       14\\
&&5&2               	& 3642.011&2523.3	& 51	& $<0.1$	&   1522.2	&    17390	& 3563.581& (3.3)	& 1	& 3.3	&     26.2	&        6\\
&&&5                	& 2716.000&381.0	& 24	& $<0.1$	&    358.6	&     1968	& 2716.000&1544.2	& 27	& $<0.1$	&    173.9	&        8\\
&&&10               	& 2238.829&395.6	& 19	& $<0.1$	&    352.9	&     3027	& 2238.829&1164.9	& 11	& $<0.1$	&     70.4	&        8\\
&&&50               	& 1656.940&116.9	& 7	& $<0.1$	&    100.7	&     1818	& 1656.940&1421.9	& 5	& $<0.1$	&     37.3	&       10\\
&&10&2              	& 4018.780& (0.0)	& 47	& $<0.1$	&   1797.6	&    40779	& 3987.321& (1.7)	& 1	& 1.7	&     43.3	&        9\\
&&&5                	& 3193.171&258.5	& 8	& $<0.1$	&    181.3	&     3944	& 3193.171&2090.1	& 3	& $<0.1$	&     43.7	&        6\\
&&&10               	& 2716.000&1076.7	& 18	& $<0.1$	&    438.0	&     9351	& 2716.000&1161.2	& 13	& $<0.1$	&     87.7	&        5\\
&&&50               	& 2077.281&3746.6	& 32	& $<0.1$	&    969.9	&    23982	& 2077.281&1972.6	& 29	& $<0.1$	&     93.0	&       11\\
&&50&2              	& 4424.054& (0.0)	& 25	& $<0.1$	&   1575.7	&   119031	& 4424.300&3424.6	& 189	& $<0.1$	&   1104.0	&        8\\
&&&5                	& 3775.055&4062.1	& 15	& $<0.1$	&    586.2	&    35188	& 3775.055&231.7	& 11	& $<0.1$	&     70.6	&        4\\
&&&10               	& 3354.719&1451.2	& 3	& $<0.1$	&    342.1	&    23867	& 3354.719&135.2	& 3	& $<0.1$	&     50.0	&        3\\
&&&50               	& 2716.000&1218.2	& 3	& $<0.1$	&    345.4	&    25455	& 2716.000&81.5	& 0	& $<0.1$	&     11.1	&        3\\
&1000&2&2           	& 2716.000&22.2	& 3	& $<0.1$	&     20.1	&       58	& 2716.000&2392.7	& 0	& $<0.1$	&     24.3	&        3\\
&&&5                	& 1765.431& (1.1)	& 78	& 1.3	&   6879.6	&    17564	& 1763.068& (3.2)	& 7	& 3.2	&     74.5	&       12\\
&&&10               	& 1406.499&7135.8	& 85	& 4.6	&   6595.1	&    20373	& 1394.502& (74.3)	& 0	& 74.3	&     24.3	&        4\\
&&&50               	& 1004.072&3035.0	& 43	& 0.2	&   2971.3	&     8951	& 1001.851& (1.8)	& 1	& 1.8	&     23.0	&        8\\
&&5&2               	& 3642.702&2230.9	& 59	& $<0.1$	&   1787.2	&     9893	& 3575.072& (4.7)	& 1	& 4.7	&     31.5	&        5\\
&&&5                	& 2716.000&57.3	& 3	& $<0.1$	&     52.6	&      481	& 2716.000&1185.4	& 0	& $<0.1$	&     33.6	&        4\\
&&&10               	& 2240.512&85.5	& 3	& $<0.1$	&     78.4	&      953	& 2240.512&1040.4	& 3	& $<0.1$	&     34.0	&        5\\
&&&50               	& 1655.092&856.2	& 16	& $<0.1$	&    699.9	&     8243	& 1655.092&2678.5	& 9	& $<0.1$	&     74.4	&       17\\
&&10&2              	& 4019.055& (0.0)	& 36	& $<0.1$	&   1954.0	&    34314	& 3988.899& (1.7)	& 1	& 1.7	&     36.1	&        6\\
&&&5                	& 3191.488&161.4	& 3	& $<0.1$	&    113.5	&     2445	& 3191.488&2520.4	& 3	& $<0.1$	&     47.3	&        6\\
&&&10               	& 2714.968& (0.1)	& 43	& 0.1	&   1445.5	&    22784	& 2716.000&2051.7	& 37	& $<0.1$	&    248.0	&        6\\
&&&50               	& 2074.701&3887.3	& 17	& $<0.1$	&   1035.0	&    25190	& 2074.701&1802.8	& 19	& $<0.1$	&     60.6	&        8\\
&&50&2              	& 4425.640& (0.0)	& 10	& $<0.1$	&   1451.7	&    95933	& 4425.624& (0.0)	& 246	& $<0.1$	&   2009.2	&       12\\
&&&5                	& 3774.285& (0.1)	& 4	& $<0.1$	&    688.5	&    40884	& 3776.152&2104.5	& 87	& $<0.1$	&    702.1	&        8\\
&&&10               	& 3357.253& (0.0)	& 21	& $<0.1$	&    682.6	&    34138	& 3357.274&423.9	& 29	& $<0.1$	&    257.6	&        4\\
&&&50               	& 2716.000&1230.4	& 3	& $<0.1$	&    297.6	&    18523	& 2716.000&115.1	& 1	& $<0.1$	&     17.9	&        3\\
\end{longtable}

\end{document}